\definecolor{blue}{HTML}{1F77B4}
\definecolor{orange}{HTML}{FF7F0E}
\definecolor{green}{HTML}{2CA02C}
\pgfplotsset{compat=1.14}
\newtheorem{tm}{Theorem}[section]
\newtheorem{prop}[tm]{Proposition}
\newtheorem{coro}[tm]{Corollary}
\newtheorem{exa}[tm]{Example}
\newtheorem{lem}[tm]{Lemma}
\newtheorem{rk}[tm]{Remark}
\numberwithin{equation}{section}
\numberwithin{tm}{section}
\title{Global dynamics of a  two-stage structured diffusive population model in time-periodic and spatially heterogeneous environments
}
\author{ H. M. Gueguezo\footnote{marcos.gueguezo@imsp-uac.org; Institut de Math\'ematiques et de Sciences Physiques, Universit\'e d'Abomey-Calavi,  Abomey-Calavi, Benin },\and  T. J. Doumat\`e\footnote{jonas.doumate@uac.bj; D\'epartement de Math\'ematiques,
		Universit\'e d'Abomey-Calavi,  Abomey-Calavi, Benin},  
	\and R. B. Salako \footnote{rachidi.salako@unlv.edu; Department of Mathematical Sciences, University of Nevada Las Vegas, NV 89154, USA
	}
}
\date{} 
\begin{document}
	
	\maketitle

	\begin{abstract}
		This work examines the global dynamics of classical solutions of a two-stage (juvenile-adult) reaction-diffusion population model in time-periodic and spatially heterogeneous environments. It is shown that the sign of the principal eigenvalue $\lambda_*$ of the time-periodic linearized system at the trivial solution completely determines the persistence of the species. Moreover, when $\lambda_*>0$, there is at least one time-periodic positive entire solution. A fairly general sufficient condition ensuring the uniqueness and  global stability of the positive time-periodic solution is obtained. In particular, 
		classical solutions eventually stabilize at the unique time-periodic positive solutions if either each subgroup's  intra-stage  growth  and  inter-stage  competition rates are proportional, or  the environment is temporally homogeneous and both subgroups diffuse slowly. In the later scenario, the asymptotic profile of steady states   with respect to small diffusion rates is established.

	\end{abstract}
	
	\noindent{\bf Key words.}  Diffusion-reaction model, stage-structured model, periodic-solution, stability
	
	\smallskip
	
	\noindent{\bf  AMS subject classifications.}  35B35, 35K57, 35P15, 92D40, 92D15
	
	\section{Introduction}
	
	The investigation of population dynamics through reaction-diffusion models occur in several evolution systems arising in applied sciences \cite{Dockery1998,DS2022,Hasting1983,He2016-1,He2016-2,HMMV2003,Lou2008,LS2020
	}. These studies have found numerous applications in several disciplines such as  ecology, biology, epidemiology \cite{brauer2012mathematical,CC2003,DNS2023,AM1979,C2014,HMMV2003,JSM1976,LS2022,Ni2011,Salako2023}.  There are several  approaches to modelling internal dispersal.  For example, it is customary to use the Laplace operator to model the dispersal mechanism of species exhibiting local and random movements between adjacent locations. 
	Moreover, in most studies, it is common to assume that all members of the population 
	diffuse uniformly at the same rate.  However,  there may be individual variation in  dispersal  mechanisms and/or diffusion rates within the same population.  Such  variations  may occur at different stages of maturity in the life cycle of the population \cite{BZ2003,AM2000,CCM2020,CCY2008}.  In the current work, we study the dynamics of a population where the adults and juveniles adopt the same dispersal mechanism but differ in their diffusion rates in time-periodic and spatially heterogeneous environments.
	
	\medskip
	
	Consider the system of parabolic equations 
	\begin{equation}\label{Eq1}
		\begin{cases}
			\partial_tu_1=d_1\Delta u_1 +r(t,x)u_2 -s(t,x)u_1-(a(t,x)+b(t,x)u_1+c(t,x)u_2)u_1 & x\in\Omega,\ t>0,\cr 
			\partial_tu_2=d_2\Delta u_2 +s(t,x)u_1 -(e(t,x)+f(t,x)u_2+g(t,x)u_1)u_2 & x\in\Omega,\ t>0,\cr 
			0=\partial_{\vec{n}}u_1=\partial_{\vec{n}}u_2 & x\in\partial\Omega,\ t>0,
		\end{cases}
	\end{equation}
	where $r,s,a,b,c,e,f \text{ and } g$ are nonnegative functions. The function 
	${\bf u}(t,x):=(u_1(t,x),u_2(t,x))$ denotes the density function of a population:  $u_1(t,x)$ and $u_2(t,x)$ are the density functions of the immature juveniles and adults who have attained reproductive maturity, respectively. $\Omega$ is a bounded open domain  in $\mathbb{R}^n$ with a smooth boundary $\partial\Omega$. ${\vec{n}}$ is the outward normal unit vector at $\partial\Omega$. $r(t,x)$ (resp. $s(t,x)$) is the adults' (resp. juvenile's) local and temporal reproduction (resp. maturity) rate. $e(t,x)$ (resp. $a(t,x)$) is the local and temporal death rate of the adults (resp. juveniles). The functions $b(t,x)$, $c(t,x)$, $f(t,x)$ and $g(t,x)$ account for the local and temporal limitations due to  over crowding and inter-specific competitions between the members of the populations. The positive constants $d_1$ and $d_2$ are the diffusion rates of the juveniles and adults, respectively. The homogeneous Neumann boundary condition is imposed in \eqref{Eq1} to indicate that the population lives on an isolated habitat and there is no movement across its boundary. The aim of the current work is to study the dynamics  of ``classical solutions" to \eqref{Eq1}. We say that  a nonnegative function ${\bf u}(t,x)=(u_1(t,x),u_2(t,x))$ defined on $(t_0, t_0+\tilde{T})\times \bar{\Omega}$, for some $\tilde{T}>0$ and $t_0\in\mathbb{R}$, is a classical solution of \eqref{Eq1}, if ${\bf u}\in C^{1,2}((t_0,t_0+\tilde{T})\times \Omega)\cap C^{0,1}((t_0,t_0+\tilde{T})\times\bar{\Omega})$ and satisfies system \eqref{Eq1} in the classical sense. Since $u_1(t,x)$ and $u_2(t,x)$ in \eqref{Eq1} model the density functions of some population, then we shall only be concerned with nonnegative classical solutions of \eqref{Eq1}.
	
	\medskip
	
	System \eqref{Eq1} has been recently studied by several authors when the environment is temporally constant (see \cite{AM2000,bouguima2012asymptotic, BZ2003,CCM2020,CCS2023,CMM2001,Henaoui2012,OSUU2023,OSUU2024,xu2023age} and the references cited therein). Assuming that the coefficients are time-independent:  the works \cite{AM2000, Henaoui2012} examined the dynamics of solutions of system \eqref{Eq1} with the homogeneous Dirichlet boundary conditions, and established some  results on the existence and uniqueness of a positive  steady-state solution; the work \cite{bouguima2012asymptotic} examined the uniqueness and global stability of the positive constant equilibrium solution whenever it exists under the additional assumptions $b=c$ and $f=g$ when all the coefficients are both temporally and spatially homogeneous; the works \cite{BZ2003,CCM2020} obtained necessary and sufficient conditions  for the persistence of the population, and the existence of positive steady-state solutions of system \eqref{Eq1}; the effects of dispersal rates on the persistence and spatial distributions of the steady-state solutions of \eqref{Eq1} are studied in \cite{CCM2020,CCS2023}.  The authors of \cite{OSUU2023,OSUU2024} studied system \eqref{Eq1} with nonlocal dispersal operators in temporally constant environments and showed that the population eventually goes extinct if and only if the principal spectrum point of its linearization  at the trivial solution is less or equal to zero. 
	
	\medskip 
	
	Thanks to the works cited above, it is known that the sign of the principal eigenvalue of the linearized system at the trivial solution plays an important role on the dynamics of solutions to \eqref{Eq1} in temporally homogeneous environments. An important  question partially answered by previous studies is concerned with the uniqueness and stability of steady-state solution of \eqref{Eq1}. Indeed, it follows from the reaction term in \eqref{Eq1} (mainly the terms $(r-cu_1)u_2$ in the first equation, and  $(s-gu_2)u_1$ in the second equation) that the solution operator generated by nonnegative solutions of \eqref{Eq1} is cooperative when population density is small, and competitive when population density is large. This feature makes the study of the uniqueness and global stability of positive entire solutions of \eqref{Eq1} more delicate as most standard arguments from the existing literature do not apply. Hence, previous attempts have been concerned with identifying sufficient conditions on the parameters of the model \eqref{Eq1} which guarantee the existence, uniqueness and global stability of the positive steady-state solutions.
	
	\medskip
	
	In the current work, the question of uniqueness and globally stability of the positive steady state solution of \eqref{Eq1} when the environment is temporally homogeneous is completely solved when the population diffuses slowly (see Theorem \ref{TH5}).  Moreover, the spatial profiles of the steady state solutions with respect to small population diffusion rates is obtained (see Theorem \ref{TH6}). We note that some partial results are  known on the spatial distributions of the positive steady with respect to small diffusion rates of the populations when the environments is temporally constants (see \cite[Theorem 1]{CCM2020} and \cite[Theorem 2.4]{CCS2023}).  
	
	\medskip
	
	\vspace{0.1 in}
	Supposing that the environment is  spatially heterogeneous and depends periodically in time, Theorems \ref{TH1} and \ref{TH2} identify  the necessary and sufficient conditions  for the extinction and persistence of the species, respectively.  Moreover, when the species persists, the existence of a strictly positive entire solution is established in Theorem \ref{TH3}. Sufficient conditions for the uniqueness and global stability of the positive entire solution is considered in  Theorem \ref{TH4}. In particular, when the ratios $c/r$ and $g/s$ are constant,  positive entire solution of \eqref{Eq1}, whenever  exists, is unique and globally stable (see Corollary \ref{Cor1}). 
	
	\medskip
	
	The rest of our work is organized as follows. Section \ref{sec2} introduces some notations and definitions necessary for the clarity of our exposition. This section is concluded with the statement of our main results. We present  the proofs of our main results 
	in Section \ref{sec3}.
	
	\section{Notations, Definitions and Main Results}\label{sec2}
	
	\subsection{Notations and Definitions}
	
	Let $C(\overline{\Omega})$ denote the Banach space of uniformly continuous functions on $\overline{\Omega}$ endowed with the standard uniform-topology norm,
	$$
	\|w\|_{\infty}:=\max_{x\in\bar{\Omega}}|w(x)|\quad w\in C(\bar{\Omega}).
	$$ 
	Define the sets 
	$$
	C^+(\overline{\Omega}):=\{w\in C(\overline{\Omega}):\ w(x)\ge 0 \ \text{for all}\ x\in\overline{\Omega}\}
	$$ 
	and 
	$$ 
	C^{++}(\overline{\Omega})=\{w\in C^{+}(\overline{\Omega}):\ w(x)>0\ \text{for all}\ x\in\overline{\Omega}\}.
	$$
	Clearly, $C^{+}(\overline{\Omega})$ (resp. $C^{++}(\overline{\Omega})$) is a closed (resp. open) subset of $C(\overline{\Omega})$. Next, fix $T>0$ and let $\mathcal{X}_T$ denote the Banach space
	$$ 
	\mathcal{X}_T:=\{w : \mathbb{R}\to C(\overline{\Omega}) \ \text{such that }\ w\ \text{is continuous and }\ T\ \text{periodic} \}
	$$
	endowed with the sup-norm 
	$$ 
	\|w\|_{\mathcal{X}_T}=\max_{t\in[0,T]}\|w(t,\cdot)\|_{\infty}\quad\ w\in\mathcal{X}_T.
	$$
	Similarly we introduce the sets 
	$$ 
	\mathcal{X}_T^+:=\{w\in \mathcal{X}_T:\ w(t,\cdot)\in C^+(\overline{\Omega})\ \text{for all}\ t\in[0,T]\}$$ 
	and
	$$ \mathcal{X}_T^{++}:=\{w\in\mathcal{X}_T^{+}:\ w(t,\cdot)\in C^{++}(\overline{\Omega})\ \text{for all}\ t\in[0,T]\}.
	$$
	Hence the sets $\mathcal{X}_T^{+}$ and $\mathcal{X}_T^{++}$ are closed and open subsets of  $\mathcal{X}_T$, respectively. Moreover, $\mathcal{X}_T^{+}$ induces a natural order on $\mathcal{X}_T$, in the sense that for any given $w$ and $\tilde{w}$ in $\mathcal{X}_T$, we say that
	$$ 
	w\le \tilde{w}\quad \text{if and only if}\quad  \tilde{w}-w\in \mathcal{X}_T^+.
	$$
	Moreover we have that $w<\tilde{w}$ (resp. $w\ll \tilde{w}$) if $w\le \tilde{w}$ and $\tilde{w}-w\neq 0$ (resp. $\tilde{w}-w\in\mathcal{X}_T^{++}$). We extend the order on $\mathcal{X}_T$ componentwise on the product space $\mathcal{X}_T\times\mathcal{X}_T$, that is, given ${\bf w}=(w_1,w_2)$ and $\tilde{\bf w}=(\tilde{w}_1,\tilde{w}_2)\in \mathcal{X}_T\times\mathcal{X}_T$, we say that ${\bf w}\le \tilde{\bf w}$ (resp. ${\bf w}<\tilde{\bf w}$, ${\bf w}\ll \tilde{\bf w}$) if $\tilde{w}_i\le w_i$ (resp. $\tilde{w}_i<w_i$, $\tilde{w}_i\ll w_i$ ) for each $i=1,2$. We shall endow $C(\overline{\Omega})\times C(\overline{\Omega})$ with the max-norm
	$$ 
	\|{\bf u}\|:=\max\{\|u_1\|_{\infty},\|u_2\|_{\infty}\}\quad \forall \ {\bf u}\in C(\overline{\Omega})\times C(\overline{\Omega}).
	$$
	Similarly, on $\mathcal{X}_T\times\mathcal{X}_T$, we   consider the norm 
	\begin{equation}
		\|{\bf w}\|_T:=\max\{\|w_1\|_{\mathcal{X}_T},\|w_2\|_{\mathcal{X}_T}\}\quad \forall\ {\bf w}\in\mathcal{X}_T\times\mathcal{X}_T.
	\end{equation}
	For every $p>1$, set  $$
	{\rm Dom}_p(\Delta):=\{u\in W^{2,p}(\Omega)\ :\ \partial_{\vec{n}}u=0 \ \text{on}\ \partial\Omega\}$$ 
	{and} 
	$${\rm Dom}_{\infty}(\Delta)=\{u\in\cap_{p\ge 1}W^{2,p}(\Omega)\ :\ \Delta u\in C(\overline{\Omega})\}.
	$$ 
	It is well known that the Laplace operator $\Delta$ generates an analytic  $c_0$-semigroup $\{e^{t\Delta}\}_{t\ge 0}$ on $L^p(\Omega)$ with  domain ${\rm Dom}_q(\Delta)$. It is also known that $\Delta$ generates an analytic $c_0$-semigroup on $C(\bar{\Omega})$ with domain ${\rm Dom}_{\infty}(\Delta)$ (\cite{Amann1983}). 
	
	\medskip

	Set $\mathcal{D}:= [{\rm Dom}_{\infty}(\Delta)]^2$ and  define the mapping $\mathcal{A}  : \mathbb{R}\times\mathcal{D}  \to C(\bar{\Omega})\times C(\bar{\Omega})$ by 
	\begin{align*}
		\mathcal{A}(t){\bf u}  =&\left(\begin{array}{c} 
			d_{1}\Delta u_{1} + r(t,\cdot)u_{2} - (s(t,\cdot)+a(t,\cdot))u_{1}	\\
			d_{2}\Delta u_{2} + s(t,\cdot)u_{1} - e(t,\cdot)u_{2} 
		\end{array} \right)\quad\ t\in\mathbb{R},\  {\bf u} \in \mathcal{D}.
	\end{align*} 
	Hence, \eqref{Eq1} is equivalent to
	\begin{equation}\label{Eq0-1} 
		\begin{cases}
			\partial_{t}{\bf u}(t,\cdot) = \mathcal{A}(t){\bf u}(t,\cdot) + G(t,{\bf u}(t,\cdot)) & x \in \Omega,  t >t_0 \\ 
			\partial_{\vec{n}}{\bf u}(t,\cdot) = 0 & x \in \partial\Omega, t >t_0 \cr 
			{\bf u}(t,\cdot)={\bf u}_0 & x\in\overline{\Omega},\ t=t_0,
		\end{cases} 
	\end{equation}
	where 
	\begin{equation}\label{GG-1}
		G(t,{\bf u}) = \left(\begin{array}{c} 
			-(b(t,.)u_{1} + c(t,.)u_{2})u_{1}	\\
			-( f(t,.)u_{2} + g(t,.)u_{1})u_{2} 
		\end{array} \right)\quad {\bf u}\in C(\overline{\Omega})\times C(\overline{\Omega}), \ t\in\mathbb{R}.
	\end{equation}
	
	Given a bounded function $h : \mathcal{S}\subset\Omega\times\mathbb{R}\to \mathbb{R}$, we set 
	$$
	h_{\inf}:=\inf_{(t,x)\in \mathcal{S}}h(t,x)\quad \text{and}\quad h_{\sup}=\sup_{(t,x)\in\mathcal{S} }h(t,x).
	$$
	When the infimum (resp. supremum) is attained, we shall use $h_{\min}$ (resp. $h_{\max}$) for $h_{\inf}$ (resp. $h_{\sup}$). Throughout this work, we suppose that the following standing hypothesis holds:
	
	\medskip 
	
	\noindent{\bf (H1)} The model parameter functions $a$, $b$, $c$, $e$, $f$, $g$, $r$, and $s$ are  H\"older continuous in both variables, time periodic with a common period $T>0$, and  $\min\{r_{\min},s_{\min},b_{\min},f_{\min}\}>0$. 
	\medskip
	
	\noindent The H\"older continuity of the model parameters specified in {\bf (H1)} ensures that solutions of \eqref{Eq1} exhibit regularity in the classical sense. Additionally, strict positivity of the intra-stage self-limitation coefficients $b$ and $f$ is imposed for ensuring the global boundedness of solutions (see inequality \eqref{NN-Eq1}). The condition $\min\{r_{\min}, s_{\min}\} > 0$ indicates that all spatial locations consistently support species growth and reproduction. It is worth noting that the latter condition can be relaxed, although we imposed it for technical reasons in the proofs of our main results. 
	
	\medskip
	
	It follows from {\bf (H1)} and \cite[Theorem 7.1.3]{Henry} that $\mathcal{A}(t)$ generates an evolution $c_0$-semigroup $\{{\bf U}(t,s)\}_{t\ge s}$, in the sense that, given an initial data ${\bf u}_0\in C(\overline{\Omega})\times C(\overline{\Omega})$, then  ${\bf v}(t,x,s):=({\bf U}(t,s){\bf u}_0)(x)$, $t\ge s$ is the unique classical solution of 
	\begin{equation}\label{Eq0-2}
		\begin{cases}
			\partial_t{\bf v}=\mathcal{A}(t){\bf v} & x\in\Omega, \ t>s,\cr 
			0=\partial_{\vec{n}}{\bf v} & x\in\partial\Omega,\  t>s,\cr
			{\bf v}(t,\cdot,s)={\bf u}_0 & x\in\Omega,\ t=s.
		\end{cases}
	\end{equation}
	Furthermore, thanks to  {\bf (H1)}, the function $(t,{\bf u})\mapsto G(t,{\bf u})$ is locally Lipschitz in ${\bf u}$ uniformly in $t\in\mathbb{R}$, and  H\"older continuous in $t$  uniformly for   ${\bf u}$ on bounded sets. Thus,  by \cite[Theoreme 3.3.4]{Henry}, for any  ${\bf u}_0\in C(\overline{\Omega})\times C(\overline{\Omega})$ and initial data $t_0\in\mathbb{R}$,  system \eqref{Eq1} has a unique classical solution  ${\bf u}(t,\cdot;{\bf u}_0,t_0)$ defined on a maximal interval of existence  $[t_0, t_0+T_{\max})$ for some $T_{\max}\in(0,\infty]$. Moreover, since $\mathcal{A}(t)$ is cooperative, if ${\bf u}_0\in [C^+(\bar{\Omega})]^2$, it follows from the comparison principle for parabolic equations that ${\bf u}(t,\cdot;{\bf u}_0,t_0)\in[C^+(\bar{\Omega})]^2 $  for every $t\in [t_0,t_0+T_{\max})$. Therefore, since $G(t,{\bf u}(t,\cdot;{\bf u}_0))\le {\bf 0}$ whenever ${\bf u}_0\in [C^+(\bar{\Omega})]^2$, then ${\bf u}(t,\cdot;{\bf u}_0,t_0)\le {\bf U}(t,t_0){\bf u}_0$ for any $t\in[t_0,t_0+t_{\max})$. Hence,  $T_{\max}=\infty$. Furthermore, due to the type of nonlinearity in \eqref{Eq0-1}, we have that ${\bf u}(t,\cdot;{\bf u}_0,t_0)$ is uniformly bounded in $t\ge t_0$ (see inequality \eqref{NN-Eq1} ). 
	
	\medskip
	
	We linearize \eqref{Eq1} at the trivial solution ${\bf 0}$ and consider the associated eigenvalue problem
	
	\begin{equation}\label{Eq2}
		\begin{cases}
			\lambda\varphi=\mathcal{A}(t)\varphi -\partial_t\varphi & x\in\Omega,\ t>0,\cr 
			0=\partial_{\vec{n}}\varphi & x\in\partial\Omega,\ t>0,\cr
			\varphi(0,\cdot)=\varphi(T,\cdot).
		\end{cases}
	\end{equation}
	Since the operator at the right hand-side of system \eqref{Eq2} is cooperative, it follows from the Krein-Rutman theorem that the periodic eigenvalue problem \eqref{Eq2} has a principal eigenvalue, say $\lambda_{*}$, with a corresponding positive eigenfunction ${\bf \varphi}=(\varphi_1,\varphi_2)\in\mathcal{X}_T^{++}\times\mathcal{X}_T^{++}$. We normalize the eigenvalue, so that $\|{\bf \varphi}\|_{T}=1$. Note that the function ${\bf \Phi}(t,x):=e^{\lambda_{*}t}{\bf \varphi}(t,x)$ solves the linear cooperative system \eqref{Eq0-2}.
	
	\subsection{Main Results}
	
	Our first result concerns the extinction of the population and reads as follows.
	
	\begin{tm}\label{TH1} Assume {\bf (H1)} and  $\lambda_{*}\le 0$. Then every classical solution ${\bf u}(t,\cdot)$ of \eqref{Eq1} with a positive initial data satisfies 
		\begin{equation}\label{TH1-eq1}
			\lim_{t\to\infty}\sup_{t_0\in\mathbb{R}}\|{\bf u}(t+t_0,\cdot;{\bf u}_0,t_0)\|=0.
		\end{equation}
	\end{tm}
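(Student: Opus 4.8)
The plan is to exploit the comparison principle together with the explicit super-solution $\mathbf{\Phi}(t,x)=e^{\lambda_*t}\boldsymbol{\varphi}(t,x)$ built from the principal eigenpair. Since the nonlinearity satisfies $G(t,\mathbf{u})\le\mathbf{0}$ for $\mathbf{u}\in[C^+(\bar\Omega)]^2$, any nonnegative classical solution of \eqref{Eq1} is dominated by the solution of the linear cooperative system \eqref{Eq0-2} with the same initial data, i.e. $\mathbf{u}(t,\cdot;\mathbf{u}_0,t_0)\le\mathbf{U}(t,t_0)\mathbf{u}_0$ for all $t\ge t_0$. So it suffices to show that the linear evolution family decays uniformly, $\sup_{t_0\in\mathbb{R}}\|\mathbf{U}(t+t_0,t_0)\mathbf{u}_0\|\to0$ as $t\to\infty$, whenever $\lambda_*\le0$.

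First I would record that $\boldsymbol{\varphi}\in\mathcal{X}_T^{++}\times\mathcal{X}_T^{++}$ is bounded below away from zero and above, so there is a constant $M=M(\mathbf{u}_0)>0$ with $\mathbf{u}_0\le M\boldsymbol{\varphi}(t_0,\cdot)$ pointwise (componentwise) on $\bar\Omega$. Then $M\mathbf{\Phi}(t,x)=Me^{\lambda_*t}\boldsymbol{\varphi}(t,x)$ is an entire solution of the linear system \eqref{Eq0-2}; by uniqueness of solutions of the linear Cauchy problem and the comparison principle for the cooperative operator $\mathcal{A}(t)$, we get
\begin{equation}\label{PP-eq1}
\mathbf{u}(t,\cdot;\mathbf{u}_0,t_0)\le\mathbf{U}(t,t_0)\mathbf{u}_0\le M e^{\lambda_*(t-t_0)}\,\frac{e^{\lambda_*t_0}\boldsymbol{\varphi}(t,\cdot)}{e^{\lambda_*t_0}} = M e^{\lambda_*(t-t_0)}\boldsymbol{\varphi}(t,\cdot)\qquad t\ge t_0.
\end{equation}
Writing $\tau=t-t_0$ and using $\|\boldsymbol{\varphi}\|_T=1$ and periodicity of $\boldsymbol{\varphi}$ in its first argument, \eqref{PP-eq1} gives $\|\mathbf{u}(t_0+\tau,\cdot;\mathbf{u}_0,t_0)\|\le M e^{\lambda_*\tau}$, and if $\lambda_*<0$ this already yields \eqref{TH1-eq1} with exponential rate, uniformly in $t_0$. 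The subtle point here is that the constant $M$ must be chosen independently of $t_0$; this is legitimate because $\inf_{t\in[0,T],x\in\bar\Omega}\varphi_i(t,x)>0$ and $\boldsymbol{\varphi}$ is $T$-periodic, so the same $M$ works for every shift $t_0$ once $\|\mathbf{u}_0\|$ is fixed.

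The main obstacle is the borderline case $\lambda_*=0$, where \eqref{PP-eq1} only gives the uniform bound $\mathbf{u}(t_0+\tau,\cdot)\le M\boldsymbol{\varphi}(t+t_0,\cdot)$ without decay. To close this case I would run a sub-super-solution / "squeezing" argument: fix any $\varepsilon>0$, and observe that by the strict negativity of the nonlinear absorption (using $b_{\min},f_{\min}>0$ from {\bf (H1)}) the solution starting from a uniformly bounded datum enters, after a finite time independent of $t_0$, a region where $\varepsilon\boldsymbol{\varphi}$ is a strict super-solution of the nonlinear problem — indeed, plugging $\varepsilon\boldsymbol{\varphi}$ into \eqref{Eq1} produces the extra good term $-\varepsilon^2(b\varphi_1+c\varphi_2)\varphi_1$ etc., so that for the linearized-at-$\varepsilon\boldsymbol\varphi$ operator the principal eigenvalue becomes strictly negative; alternatively one can use that $\lambda_* = 0$ forces the linear semigroup to be bounded, and then a Poincaré-map (period map) argument on $[0,T]$ combined with the strong maximum principle shows the nonlinear period map is a strict contraction toward $\mathbf{0}$ on order intervals $[\mathbf{0},M\boldsymbol\varphi]$. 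Either route reduces the $\lambda_*=0$ case to the strictly-subcritical situation after one period, and iterating the (uniform in $t_0$) period map drives the solution to $\mathbf{0}$. I would present the eigenvalue-perturbation version, since it is cleanest: for each $\varepsilon>0$ the shifted eigenvalue problem with potential reduced by $\varepsilon(b\varphi_1+\cdots)$ has principal eigenvalue $\lambda_*(\varepsilon)<\lambda_*\le0$, giving genuine exponential decay below level $\varepsilon$, and letting $\varepsilon\downarrow0$ completes the proof of \eqref{TH1-eq1}.
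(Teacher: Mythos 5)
Your treatment of the case $\lambda_*<0$ is correct and is exactly the paper's argument: dominate ${\bf u}$ by ${\bf U}(t,t_0){\bf u}_0$, sandwich ${\bf u}_0\le (\|{\bf u}_0\|/\varphi_{\min})\varphi(t_0,\cdot)$, and use ${\bf U}(t,t_0)\varphi(t_0,\cdot)=e^{\lambda_*(t-t_0)}\varphi(t,\cdot)$; the constant is uniform in $t_0$ by periodicity of $\varphi$. The problem is the borderline case $\lambda_*=0$, which is the whole content of the theorem, and there your sketch has a genuine gap. The assertion that ``the solution enters, after a finite time independent of $t_0$, a region where $\varepsilon\varphi$ is a strict super-solution'' is not an argument: $\varepsilon\varphi$ is a super-solution of the nonlinear system for \emph{every} $\varepsilon>0$ (precisely because $G\le {\bf 0}$), and the claim that the solution eventually lies in $[{\bf 0},\varepsilon\varphi]$ is essentially the theorem itself. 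Your ``cleanest'' eigenvalue-perturbation route also does not close: to conclude that the potential is reduced by a quantity of order $\varepsilon$ (and hence that the perturbed principal eigenvalue is $\le \lambda_*-c\varepsilon<0$), you need a \emph{pointwise in $x$} lower bound $u_i(t,x)\ge c\varepsilon$ for both components over a full period, uniformly in $t_0$. Knowing only that $\|{\bf u}(t,\cdot)\|\ge\varepsilon$ gives no such bound --- the quadratic absorption $-(bu_1+cu_2)u_1$ can be negligible on most of $\Omega$ --- so the dichotomy ``either the solution drops below level $\varepsilon$ or it decays exponentially'' is not established. To repair this you would need a Harnack-type inequality for the cooperative parabolic system (with constants uniform in $t_0$, which periodicity does supply) to upgrade the sup-norm lower bound to a pointwise one; you never invoke such an estimate. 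The same objection applies to your alternative: the period map satisfies a \emph{pointwise strict} inequality $P({\bf u}_0)<{\bf u}_0$-type decrease by the strong maximum principle, but strict decrease per period does not imply convergence to ${\bf 0}$ without either a quantified contraction rate or a compactness argument; the iterates could a priori decrease to a positive limit.

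For comparison, the paper resolves exactly this point by introducing the nonincreasing quantity $\sigma(t,{\bf u}_0)=\inf\{\sigma>0:{\bf u}(t+t_0,\cdot)\le\sigma e^{\lambda_*t}\varphi(t+t_0,\cdot)\ \forall t_0\}$ (Lemma \ref{lem1}) and arguing by contradiction: if $\sigma(\infty,{\bf u}_0)>0$, one extracts, via Arzel\`a--Ascoli and parabolic regularity along a sequence of time shifts $t_{0,n}+n$, a limiting entire solution ${\bf u}^{\infty}$ that touches $\sigma(\infty,{\bf u}_0)\varphi^{\infty}$ in the ratio norm at time $0$; the strong maximum principle applied to this \emph{entire} limit gives ${\bf u}^{\infty}\gg{\bf 0}$, hence ${\bf u}^{\infty}$ is a \emph{strict} subsolution of the limiting linear system, and comparison forces ${\bf u}^{\infty}\ll\sigma(\infty,{\bf u}_0)\varphi^{\infty}$, a contradiction. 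The compactness step is what converts pointwise strictness into the needed uniform gap (and simultaneously delivers the uniformity in $t_0$). Your proposal needs either this limiting argument or an explicit uniform Harnack estimate; as written, the $\lambda_*=0$ case is not proved.
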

	
	Thanks to Theorem \ref{TH1}, we see that the population will eventually go extinct if $\lambda_*\le 0$. A natural question is to inquire what happens when $\lambda_*>0$. Our next result is concerned with the uniform persistence of the population when $\lambda_*>0$.
	
	\begin{tm}\label{TH2}
		Assume {\bf (H1)} and $\lambda_{*}> 0$ and fix a positive initial data ${\bf u}_0\in[C^+(\bar{\Omega})]^2\setminus\{\bf 0\}$. Then there is $\eta_*>0$ such that  
		\begin{equation}\label{TH2-eq1}
			\eta_*{\bf 1}\le {\bf u}(t+t_0,\cdot;{\bf u}_0,t_0)\quad \forall\ t\ge T, \ t_0\in\mathbb{R}.
		\end{equation}
		
	\end{tm}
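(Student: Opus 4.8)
The plan is to establish uniform weak persistence with respect to the trivial state, to upgrade it to uniform persistence of the period (Poincar\'e) map via the standard abstract theory of uniform persistence for $T$-periodic semiflows, and finally to convert the resulting estimate into the pointwise bound \eqref{TH2-eq1} by means of parabolic regularity. As preliminaries we would record three facts. First, since $r_{\min}>0$, $s_{\min}>0$ and $\mathcal{A}(t)$ is cooperative, applying the parabolic strong maximum principle to the two equations of \eqref{Eq1} in turn shows that ${\bf u}(t,\cdot;{\bf u}_0,t_0)\gg{\bf 0}$ for every $t>t_0$ whenever ${\bf u}_0\in[C^+(\bar\Omega)]^2\setminus\{{\bf 0}\}$. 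Second, the a priori bound \eqref{NN-Eq1} together with interior parabolic (Schauder) estimates shows that, for any fixed ${\bf u}_0$, the family $\{{\bf u}(t+t_0,\cdot;{\bf u}_0,t_0):\ t\ge T,\ t_0\in\mathbb{R}\}$ is precompact in $[C(\bar\Omega)]^2$; by $T$-periodicity of the coefficients one may restrict $t_0$ to $[0,T]$. Third, the period map $Q:[C^+(\bar\Omega)]^2\to[C^+(\bar\Omega)]^2$, $Q({\bf u}_0):={\bf u}(T,\cdot;{\bf u}_0,0)$, is compact and, by \eqref{NN-Eq1}, point dissipative, hence admits a global attractor.

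The core step is uniform weak persistence. Fix $\epsilon\in(0,\lambda_*)$ and let $\mathcal{A}_\epsilon(t)$ be the cooperative operator obtained from $\mathcal{A}(t)$ by subtracting $\epsilon$ from each of its two diagonal (zeroth-order) terms; subtracting a constant merely shifts the principal periodic eigenvalue, so $\mathcal{A}_\epsilon(t)$ has principal eigenvalue $\lambda_*-\epsilon>0$ with the same positive eigenfunction ${\bf\varphi}$, and ${\bf\Phi}_\epsilon(t,\cdot):=e^{(\lambda_*-\epsilon)t}{\bf\varphi}(t,\cdot)$ solves $\partial_t{\bf v}=\mathcal{A}_\epsilon(t){\bf v}$. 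Choose $\delta_0>0$ so small that $(b_{\sup}+c_{\sup})\delta_0\le\epsilon$ and $(f_{\sup}+g_{\sup})\delta_0\le\epsilon$. We claim that $\limsup_{t\to\infty}\|{\bf u}(t+t_0,\cdot;{\bf u}_0,t_0)\|>\delta_0$ for every ${\bf u}_0\in[C^+(\bar\Omega)]^2\setminus\{{\bf 0}\}$ and every $t_0\in\mathbb{R}$. Indeed, if this fails there is $t_1>t_0$ with $\|{\bf u}(t,\cdot)\|\le\delta_0$ for all $t\ge t_1$; then on $[t_1,\infty)$ one has $bu_1+cu_2\le\epsilon$ and $fu_2+gu_1\le\epsilon$, hence $G(t,{\bf u})\ge-\epsilon\,{\bf u}$ componentwise, so ${\bf u}$ is a supersolution of the \emph{linear} cooperative system $\partial_t{\bf v}=\mathcal{A}_\epsilon(t){\bf v}$ there. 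Since ${\bf u}(t_1+T,\cdot)\gg{\bf 0}$, pick $\rho>0$ with ${\bf u}(t_1+T,\cdot)\ge\rho\,{\bf\varphi}(t_1+T,\cdot)$; the comparison principle for this linear cooperative system gives ${\bf u}(t,\cdot)\ge\rho\,e^{(\lambda_*-\epsilon)(t-t_1-T)}{\bf\varphi}(t,\cdot)$ for $t\ge t_1+T$, and since ${\bf\varphi}\in\mathcal{X}_T^{++}\times\mathcal{X}_T^{++}$ is bounded away from ${\bf 0}$, the right-hand side blows up, contradicting $\|{\bf u}(t,\cdot)\|\le\delta_0$.

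Next we would apply the abstract uniform persistence theory to $Q$ with $X_0=[C^+(\bar\Omega)]^2\setminus\{{\bf 0}\}$ (open in $X=[C^+(\bar\Omega)]^2$, forward invariant by the first preliminary) and $\partial X_0=\{{\bf 0}\}$: the maximal compact invariant set in $\partial X_0$ is $\{{\bf 0}\}$, which is isolated and acyclic, and by the weak persistence just proved no orbit starting in $X_0$ converges to ${\bf 0}$; combined with compactness and point dissipativity of $Q$, this yields that $Q$ is uniformly persistent and that $Q|_{X_0}$ has a global attractor $\mathcal{A}_0$. Every element of $\mathcal{A}_0$ has the form ${\bf u}(T,\cdot;\tilde{\bf u}_0,0)$ with $\tilde{\bf u}_0\ne{\bf 0}$, so $\mathcal{A}_0\subset[C^{++}(\bar\Omega)]^2$ by the first preliminary, and by compactness $\eta_0:=\min_{{\bf w}\in\mathcal{A}_0}\min_{x\in\bar\Omega}\min\{w_1(x),w_2(x)\}>0$.

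Finally, we promote this to \eqref{TH2-eq1} for fixed positive ${\bf u}_0$. Taking $t_0=0$ first: since $Q|_{X_0}$ has the global attractor $\mathcal{A}_0\subset[C^{++}(\bar\Omega)]^2$, there is $n_0=n_0({\bf u}_0)$ with $\mathrm{dist}(Q^n{\bf u}_0,\mathcal{A}_0)<\eta_0/2$ for all $n\ge n_0$; running the equation one further period from $Q^n{\bf u}_0$ and using continuous dependence together with compactness of $\mathcal{A}_0$, we obtain $\min_{x}\min\{u_1,u_2\}\big({\bf u}(t,\cdot;{\bf u}_0,0)\big)\ge\eta_1>0$ for all $t\ge n_0T$. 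For $T\le t\le n_0T$ the solution values form a compact subset of $[C^{++}(\bar\Omega)]^2$ by the first and second preliminaries, so $\min_{x}\min\{u_1,u_2\}\ge\eta_2>0$ there. Setting $\eta_*:=\min\{\eta_1,\eta_2\}$, a routine compactness argument makes $\eta_*$ uniform over $t_0\in[0,T]$, and $T$-periodicity of the coefficients then extends it to all $t_0\in\mathbb{R}$, which is \eqref{TH2-eq1}. We expect the main obstacle to be the weak-persistence step: the comparison must be carried out against the \emph{linear} cooperative operator $\mathcal{A}_\epsilon(t)$, because \eqref{Eq1} is order preserving only while densities remain small, and $\delta_0$ must be extracted from the coefficient bounds alone so that the conclusion is uniform over initial data; the subsequent passage from the period-map estimate to the pointwise bound \eqref{TH2-eq1} is then routine parabolic-compactness bookkeeping.
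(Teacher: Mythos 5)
Your proposal is correct in its main thrust but takes a genuinely different route from the paper. Where the paper works directly and constructively with the principal eigenfunction, you invoke the abstract machinery of uniform persistence for periodic semiflows.

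The paper's proof is organized around the eigenfunction $\varphi$: it first establishes (via a contradiction--compactness argument, Arzela--Ascoli, and passage to a limiting system) the uniform-in-$t_0$ bound \eqref{DD2} giving ${\bf u}(T+t_0,\cdot;{\bf u}_0,t_0)\ge\gamma_0\varphi(t_0,\cdot)$ for all $t_0$; it then iterates Lemma \ref{Lem2-4} (which shows that for small data, a lower eigenfunction cap is preserved over one period) to get $\gamma\varphi$ as a lower bound at all times $t_0+nT$; and it finishes by running a crude Harnack-type comparison over each unit time step to propagate this bound to all $t\ge T$. Your plan instead proves uniform weak persistence against $\{{\bf 0}\}$ by observing that a solution staying below $\delta_0$ in norm is a supersolution of $\partial_t{\bf v}=\mathcal{A}_\epsilon(t){\bf v}$, whose principal eigenvalue $\lambda_*-\epsilon$ is still positive, forcing blow-up; you then invoke Hale--Waltman/Zhao persistence theory for the Poincar\'e map $Q$ and extract $\eta_*$ from the global attractor $\mathcal{A}_0\subset[C^{++}(\bar\Omega)]^2$. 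Your weak-persistence step is tighter than the paper's route to \eqref{DD2} (it uses the same idea as Lemma \ref{Lem2-4} but as a once-and-for-all comparison rather than iteratively), and conceptually it cleanly isolates the roles of the linearization and of compactness. What it costs you is a reliance on external abstract theory, and, more substantively, it shifts the burden of the $t_0$-uniformity to what you call a ``routine compactness argument'' at the end. That step is not routine: the attractor and the entry time $n_0$ depend on the choice of Poincar\'e map $Q_{t_0}$, and making $n_0$ uniform over $t_0\in[0,T]$ requires exactly the kind of sequential-compactness/limiting-system argument the paper carries out to establish \eqref{DD2}; you would need to spell that out (or note that joint continuity of $(t_0,{\bf u}_0)\mapsto{\bf u}(T+t_0,\cdot;{\bf u}_0,t_0)$ together with the contradiction scheme of the paper's proof gives the uniform $\gamma_0$). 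There is also minor bookkeeping at the start of the weak-persistence step: the negation of $\limsup>\delta_0$ gives $\|{\bf u}(t,\cdot)\|\le\delta_0+\epsilon'$ eventually, not $\le\delta_0$, so you should build slack into the choice of $\delta_0$. Neither issue is a real gap, but the $t_0$-uniformity deserves more than a sentence given that it is where the paper does the actual work.
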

	
	Theorem \ref{TH2} shows that the population persists if $\lambda_*>0$. Thanks to the last two results, it is of great importance to know the underlying hypotheses on the model parameters which yield $\lambda_*>0$. However, it is known that $\lambda_*$ may depend nontrivially on the diffusion rates $d_1$ and $d_2$. For example, the work \cite{CCS2023} studied how the diffusion rates $d_1$ and $d_2$ affect $\lambda_*$ when the parameters are temporally homogeneous. In the latter case, the results of \cite{CCS2023} showed that $\lambda_*$ may not depend monotonically on the diffusion rates which is in strong contrast to the case of single-stage population models.  Our next result provides sufficient assumptions on the model parameters which give the sign of $\lambda_*$ irrespective of the population's diffusion rates.
	
	\begin{prop}\label{Prop1} Assume {\bf (H1)} and let $\lambda_*$ be the principal eigenvalue of \eqref{Eq2}. Then the following conclusions hold.
		\begin{itemize}
			\item[\rm (i)] If  either  $(r+s)^2\le,\ne 4(a+s)e$  or $\big(\frac{a+s}{r}\big)_{\min}\big(\frac{e}{s}\big)_{\min}>1$, then $\lambda_*<0$.
			\item[\rm (ii)] 
			If either $\int_{0}^T\int_{\Omega}\sqrt{rs}>\frac{1}{2}\int_0^T\int_{\Omega}(a+s+e)$ or  $\big(\frac{a+s}{r}\big)_{\max}\big(\frac{e}{s}\big)_{\max}<1$, then $\lambda_*>0$. 
		\end{itemize}
		
	\end{prop}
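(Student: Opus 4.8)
The plan is to establish the four sufficient conditions with two complementary devices: an integral identity obtained by dividing the eigenvalue equations by the principal eigenfunction (for the conditions stated through integrals of the coefficients), and a comparison with an auxiliary system in which the inter-stage and intra-stage rates are \emph{exactly} proportional, together with constant sub/supersolutions (for the conditions stated through pointwise ratios). Throughout I would use the positive principal eigenfunction ${\bf \varphi}=(\varphi_1,\varphi_2)\in\mathcal X_T^{++}\times\mathcal X_T^{++}$ of \eqref{Eq2}, the associated entire solution ${\bf \Phi}=e^{\lambda_* t}{\bf \varphi}$ of \eqref{Eq0-2}, the parabolic (Schauder) regularity of ${\bf \varphi}$ under {\bf (H1)}, and two standard facts about the periodic principal eigenvalue of a cooperative system: it is nondecreasing in the operator, and one has $\lambda_*\le\lambda$ (resp. $\lambda_*\ge\lambda$) as soon as there is a constant pair ${\bf w}\gg{\bf 0}$ with $\mathcal A(t){\bf w}-\partial_t{\bf w}\le\lambda{\bf w}$ (resp. $\ge\lambda{\bf w}$).

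For the first condition in (ii), I would divide the first equation of \eqref{Eq2} by $\varphi_1$ and the second by $\varphi_2$ --- legitimate since each $\varphi_i$ is continuous and strictly positive on the compact set $[0,T]\times\overline\Omega$ --- add the two identities, and integrate over $\Omega\times[0,T]$. The terms $\partial_t(\ln\varphi_i)$ integrate to zero by $T$-periodicity; the identity $\Delta\varphi_i/\varphi_i=\nabla\!\cdot\!\big(\nabla\varphi_i/\varphi_i\big)+|\nabla\varphi_i|^2/\varphi_i^2$ together with the Neumann condition gives $\int_\Omega\Delta\varphi_i/\varphi_i\,dx=\int_\Omega|\nabla\varphi_i|^2/\varphi_i^2\,dx\ge0$; and $r\varphi_2/\varphi_1+s\varphi_1/\varphi_2\ge2\sqrt{rs}$ by the arithmetic--geometric mean inequality. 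Collecting these,
\[ 2\lambda_*\,|\Omega|\,T\ \ge\ 2\int_0^T\!\!\int_\Omega\sqrt{rs}\,dx\,dt\ -\ \int_0^T\!\!\int_\Omega(a+s+e)\,dx\,dt, \]
so $\int_0^T\!\int_\Omega\sqrt{rs}>\tfrac12\int_0^T\!\int_\Omega(a+s+e)$ forces $\lambda_*>0$.

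For the first condition in (i), I would run an energy estimate on $E(t):=\int_\Omega(\Phi_1^2+\Phi_2^2)\,dx$. Using the equations, Green's formula and the Neumann condition,
\[ E'(t)=-2\int_\Omega\!\big(d_1|\nabla\Phi_1|^2+d_2|\nabla\Phi_2|^2\big)dx+2\int_\Omega\!\big((r+s)\Phi_1\Phi_2-(a+s)\Phi_1^2-e\Phi_2^2\big)dx, \]
and the last integrand, as a quadratic form in $(\Phi_1,\Phi_2)$, is negative semidefinite exactly when its discriminant $(r+s)^2-4(a+s)e$ is $\le0$ (recall $a+s,e>0$). Hence $E'\le0$; since $E(t)=e^{2\lambda_* t}N(t)$ with $N(t):=\int_\Omega(\varphi_1^2+\varphi_2^2)\,dx$ positive and $T$-periodic, monotonicity of $E$ over one period yields $e^{2\lambda_* T}\le1$, i.e. $\lambda_*\le0$. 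To upgrade this to $\lambda_*<0$ I would suppose $\lambda_*=0$: then $E=N$ is $T$-periodic and nonincreasing, hence $E'\equiv0$, which forces both $\nabla\varphi_i\equiv0$ (so $\varphi_i$ is spatially constant for every $t$) and the pointwise equality $(r+s)\varphi_1\varphi_2=(a+s)\varphi_1^2+e\varphi_2^2$; dividing by $\varphi_1\varphi_2>0$ and using AM--GM gives $(r+s)^2\ge4(a+s)e$ everywhere, whence $(r+s)^2\equiv4(a+s)e$, contradicting the hypothesis $(r+s)^2\ne4(a+s)e$. This strictness upgrade --- and the regularity of ${\bf\varphi}$ needed to differentiate $E$ and integrate by parts --- is the point I expect to be the main obstacle; everything else reduces to routine estimates.

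For the pointwise-ratio conditions, set $\alpha:=\big(\frac{a+s}{r}\big)_{\min}$, $\beta:=\big(\frac{e}{s}\big)_{\min}$ in (i), and $\bar\alpha:=\big(\frac{a+s}{r}\big)_{\max}$, $\bar\beta:=\big(\frac{e}{s}\big)_{\max}$ in (ii). Since $a+s\ge\alpha r$ and $e\ge\beta s$ pointwise, the operator $\tilde{\mathcal A}(t){\bf u}=\big(d_1\Delta u_1+r u_2-\alpha r u_1,\ d_2\Delta u_2+s u_1-\beta s u_2\big)$ dominates $\mathcal A(t)$ on the positive cone, so $\lambda_*\le\lambda_*(\tilde{\mathcal A})$ by monotonicity; symmetrically $\lambda_*\ge\lambda_*(\hat{\mathcal A})$ for the operator $\hat{\mathcal A}$ obtained by replacing $a+s$ by $\bar\alpha r$ and $e$ by $\bar\beta s$. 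Testing against the constant pair ${\bf w}=(\kappa,1)$: if $\alpha\beta>1$ one may choose $\kappa\in(1/\alpha,\beta)$, and then $\tilde{\mathcal A}(t){\bf w}=\big(r(1-\alpha\kappa),\,s(\kappa-\beta)\big)$ has both components $\le\lambda_0{\bf w}$ with $\lambda_0:=\max\{r_{\min}(1-\alpha\kappa)/\kappa,\ s_{\min}(\kappa-\beta)\}<0$ (using $1-\alpha\kappa<0$, $\kappa-\beta<0$, $r\ge r_{\min}>0$, $s\ge s_{\min}>0$), giving $\lambda_*(\tilde{\mathcal A})\le\lambda_0<0$; if $\bar\alpha\bar\beta<1$ one chooses $\kappa\in(\bar\beta,1/\bar\alpha)$ and gets $\hat{\mathcal A}(t){\bf w}\ge\lambda_1{\bf w}$ with $\lambda_1:=\min\{r_{\min}(1-\bar\alpha\kappa)/\kappa,\ s_{\min}(\kappa-\bar\beta)\}>0$, giving $\lambda_*(\hat{\mathcal A})\ge\lambda_1>0$. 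In each case the admissible $\kappa$-interval is nonempty precisely because of the product hypothesis, and the conclusion follows at once from the sub/supersolution characterization of the periodic principal eigenvalue.
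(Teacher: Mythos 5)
Your proposal is correct and follows essentially the same route as the paper: the $\ln\varphi_i$ integral identity with AM--GM for the integral condition in (ii), the quadratic-form estimate obtained by testing the eigenvalue equations against $\varphi_i$ for the first condition in (i), and constant test pairs combined with the comparison principle for principal eigenvalues of cooperative systems for both pointwise-ratio conditions (your $(\kappa,1)$ with $\kappa\in(1/\alpha,\beta)$ is the reciprocal normalization of the paper's $(1,l_0)$ with $l_0\in(1/\beta,\alpha)$). The only cosmetic difference is in the strictness step of (i): the paper completes the square and reads $\lambda_*<0$ directly from $\int_{\Omega_T}\bigl(e-\tfrac{(r+s)^2}{4(a+s)}\bigr)\varphi_2^2>0$, whereas you reach the same conclusion by a contradiction argument from $E'\equiv 0$; both are valid.
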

	
	\medskip
	
	Next, when the species persists, it is usually the case for most unstructured population models that there exists some  positive entire solution. By a positive entire solution of \eqref{Eq1}, we mean a positive classical  solution of \eqref{Eq1} defined for all time $t\in\mathbb{R}$. A similar result also holds for the time-periodic two-stage structured model \eqref{Eq1} as established in our next result. 
	
	\begin{tm}\label{TH3} Assume {\bf (H1)} and $\lambda_*>0$. 
		\begin{itemize}
			\item[\rm (i)] System \eqref{Eq1} has at least one  $T$-periodic strictly positive entire solution.
			\item[\rm (ii)] If the coefficients are time-independent, then system \eqref{Eq1} has at least one positive steady state solution.
			\item[\rm (iii)] If all the parameters are constant, then system \eqref{Eq1} has a constant positive equilibrium solution.
		\end{itemize}
		
	\end{tm}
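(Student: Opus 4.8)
The plan is to obtain a positive $T$-periodic entire solution of \eqref{Eq1} by a monotone sequential (sub/super-solution) argument applied to the time-$T$ (Poincar\'e) map, exploiting the fact that although the full solution semiflow is neither cooperative nor competitive, it is \emph{order-preserving on sufficiently small data} and is dominated by the linear cooperative flow $\mathbf{U}(t,s)$. First I would construct a \textbf{super-solution}. Fix any $\mathbf{u}_0\in[C^+(\bar\Omega)]^2\setminus\{\mathbf 0\}$; by the global boundedness asserted after \eqref{NN-Eq1}, there is a constant $M>0$ with $\mathbf u(t+t_0,\cdot;\mathbf u_0,t_0)\le M\mathbf 1$ for all $t\ge T$ and all $t_0$, and choosing $M$ larger if necessary one checks directly from \eqref{Eq1} that the constant vector $\overline{\mathbf u}:=M\mathbf 1$ is a (time-independent, hence $T$-periodic) super-solution, i.e.\ the solution started at $\overline{\mathbf u}$ at time $0$ stays $\le\overline{\mathbf u}$ and decreases at $t=T$; here the strict positivity of $b,f$ from {\bf (H1)} is exactly what makes the reaction terms negative for $u_1$ or $u_2$ large. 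Next I would construct a \textbf{sub-solution} from the principal eigenfunction: since $\lambda_*>0$, the function $\mathbf\Phi(t,x)=e^{\lambda_* t}\boldsymbol\varphi(t,x)$ solves the linear system \eqref{Eq0-2}, and for $\delta>0$ small the vector $\underline{\mathbf u}:=\delta\boldsymbol\varphi(0,\cdot)$ is a sub-solution of the time-$T$ map: plugging $\delta\boldsymbol\varphi$ into \eqref{Eq1} the linear part contributes $\lambda_*\delta\boldsymbol\varphi>0$ while the quadratic nonlinearity $G$ is $O(\delta^2)$, so for $\delta$ small the solution started at $\delta\boldsymbol\varphi(0,\cdot)$ satisfies $\mathbf u(T,\cdot)\ge e^{\lambda_* T}\delta\boldsymbol\varphi(T,\cdot)-O(\delta^2)\ge \delta\boldsymbol\varphi(0,\cdot)=\underline{\mathbf u}$, using $\boldsymbol\varphi(0,\cdot)=\boldsymbol\varphi(T,\cdot)$; shrinking $\delta$ further we also arrange $\underline{\mathbf u}\le\overline{\mathbf u}$.

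Once the ordered pair $\underline{\mathbf u}\le\overline{\mathbf u}$ is in hand, I would run the standard monotone iteration for the Poincar\'e map $Q:=$ ``solve \eqref{Eq1} on $[0,T]$''. The key point is that on the order interval $[\underline{\mathbf u},\overline{\mathbf u}]$ the map $Q$ is \emph{monotone}: this is where the sign structure must be handled carefully. One clean way is to pass to the shifted/relabelled system or, more simply, to invoke a comparison argument showing that for data pinned between $\delta\boldsymbol\varphi(0,\cdot)$ and $M\mathbf 1$ the competitive cross terms $-c u_1 u_2$ and $-g u_2 u_1$ can be absorbed: writing the first equation as $\partial_t u_1=d_1\Delta u_1+r u_2-(s+a)u_1-(bu_1+cu_2)u_1$ and treating $-(bu_1+cu_2)u_1$ as a (nonpositive, Lipschitz on the bounded order interval) zeroth-order perturbation, the comparison principle for the resulting cooperative linear problem with that perturbation yields $Q\underline{\mathbf u}\le Q\mathbf v\le Q\overline{\mathbf u}$ whenever $\underline{\mathbf u}\le\mathbf v\le\overline{\mathbf u}$, and moreover $\underline{\mathbf u}\le Q\underline{\mathbf u}$, $Q\overline{\mathbf u}\le\overline{\mathbf u}$ by the sub/super-solution property. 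Then $\underline{\mathbf u}\le Q\underline{\mathbf u}\le Q^2\underline{\mathbf u}\le\cdots\le\overline{\mathbf u}$ is a bounded increasing sequence; by parabolic $L^p$ and Schauder estimates the iterates are precompact in $[C^1(\bar\Omega)]^2$, so $\mathbf u^*:=\lim_n Q^n\underline{\mathbf u}$ exists, is a fixed point of $Q$, and satisfies $\delta\boldsymbol\varphi(0,\cdot)\le\mathbf u^*$, hence $\mathbf u^*\in[C^{++}(\bar\Omega)]^2$. Extending $\mathbf u^*$ by $T$-periodicity and solving \eqref{Eq1} forward/backward gives a $T$-periodic strictly positive entire solution, proving (i).

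For (ii), when the coefficients are time-independent the same construction works with $T$ an arbitrary fixed positive number, and the fixed point $\mathbf u^*$ of the time-$T$ map is a fixed point for every $T$; differentiating (or uniqueness of the ODE-in-$t$) forces $\partial_t\mathbf u^*=0$, so $\mathbf u^*$ is a genuine steady state—alternatively one iterates the time-$T$ map with autonomous coefficients and notes the limit must be stationary. For (iii), when all parameters are constants one repeats the monotone iteration on the finite-dimensional \emph{kinetic} system (drop the Laplacians, or equivalently restrict to spatially constant data, which is invariant), obtaining a spatially constant positive equilibrium; since constants solve the PDE this is the desired constant positive equilibrium of \eqref{Eq1}. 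The \textbf{main obstacle} I anticipate is rigorously justifying the monotonicity of $Q$ on the order interval despite the lack of a global order-preserving structure: one must make sure the nonpositive quadratic terms really can be treated as order-compatible zeroth-order coefficients on $[\underline{\mathbf u},\overline{\mathbf u}]$, i.e.\ that the relevant off-diagonal couplings $r-cu_1$ (and $s-gu_2$) need not stay nonnegative for the comparison to go through — the fix is that comparison for \eqref{Eq1} only requires cooperativity of the \emph{linearization along the ordered path}, and the diagonal quadratic self-limitation terms, being decreasing, preserve the ordering; getting this bookkeeping exactly right is the delicate part, and it is presumably why {\bf (H1)} imposes $b_{\min},f_{\min}>0$.
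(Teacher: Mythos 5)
There is a genuine gap in the central step of your argument for part (i): the claimed monotonicity of the Poincar\'e map $Q$ on the order interval $[\delta\boldsymbol\varphi(0,\cdot),M\mathbf 1]$ is false in general. Writing the reaction terms as $F_1=ru_2-(a+s)u_1-(bu_1+cu_2)u_1$ and $F_2=su_1-(e+fu_2+gu_1)u_2$, the off-diagonal derivatives are $\partial F_1/\partial u_2=r-cu_1$ and $\partial F_2/\partial u_1=s-gu_2$. The comparison principle for systems, applied to the difference of two ordered solutions, requires precisely these off-diagonal couplings (evaluated at intermediate values) to be nonnegative; the diagonal self-limitation terms are irrelevant to this sign condition. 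Since the super-solution level $M$ must be taken at least as large as $\max\{\|r\|_\infty,\|s\|_\infty\}/\min\{b_{\min},f_{\min}\}$, nothing in {\bf (H1)} prevents $u_1>r/c$ or $u_2>g/s$ on parts of the order interval, where the system is competitive rather than cooperative and order preservation fails. Your proposed ``fix'' --- that cooperativity of the off-diagonal couplings is not needed --- is exactly backwards; it is the one structural hypothesis that cannot be dispensed with. This is the difficulty the paper flags in its introduction (cooperative at low density, competitive at high density), and it is why the cooperativity condition $r\ge c\tilde u_1$, $s\ge g\tilde u_2$ reappears as the extra hypothesis {\bf (H2)} for the uniqueness/stability results. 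The paper's proof of Theorem \ref{TH3}(i) avoids monotone iteration entirely: it builds a closed, bounded, convex subset $\mathcal{M}^{0,\alpha}_{\gamma^*,\xi^*}$ of a fractional power space $[X^\alpha]^2$ that is invariant under the Poincar\'e map (Lemmas \ref{Lem2-4}--\ref{Lem2-7}, which use the eigenfunction only to pin solutions away from $\mathbf 0$ and below $\gamma^*\boldsymbol\varphi$), shows the map is compact and continuous there, and applies the Schauder fixed point theorem, which needs no order structure.

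A secondary issue: your derivation of (ii) from (i) is also shaky. A fixed point of the time-$T$ map produced by your construction for one value of $T$ is not automatically a fixed point of the time-$T'$ map for other $T'$, so the conclusion $\partial_t\mathbf u^*=0$ does not follow; one would need a single function fixed by all the maps simultaneously, or a separate argument. The paper sidesteps this by citing the existing steady-state existence results of \cite{CCM2020} for (ii) and \cite{bouguima2012asymptotic} for (iii), and only proves the genuinely time-periodic case.
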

	The question of the uniqueness and stability of the entire positive solution of \eqref{Eq1} turns out to be  challenging. In spatially and temporally homogeneous environments, a positive constant steady-state solution of \eqref{Eq1} is unique whenever it exists. Moreover, the linearized system at this positive equilibrium solution is always cooperative. This leads to the question of whether any entire solution of the nonhomogeneous system \eqref{Eq1} is unique and globally stable when the linearization of \eqref{Eq1} at such a solution is cooperative. Remarkably, our subsequent result provides an affirmative answer to this question.  To state this result, we introduce the following assumption.
	
	\medskip
	
	{\bf (H2)} $\lambda_*>0$ and \eqref{Eq1} has a strictly positive T-periodic  entire solution $\tilde{\bf u}(t,x)$ satisfying $r(t,x)\ge c(t,x)\tilde{u}_1(t,x)$ and $s(t,x)\ge g(t,x)\tilde{u}_2(t,x)$ for every $x\in\bar{\Omega}$ and $t\in\mathbb{R}$. 
	
	\medskip
	
	Note that, since  $0<\tilde{u}_1\le \frac{\max{\{\|r\|_{\infty},\|s\|_{\infty}\}}}{\min\{b_{\min},f_{\min}\}}$ and $0<\tilde{u}_2\le \frac{\max{\{\|r\|_{\infty},\|s\|_{\infty}\}}}{\min\{b_{\min},f_{\min}\}}$ for any positive entire solution $\tilde{\bf u}(t,x)$ of \eqref{Eq1},  {\bf (H2)} holds for small $\|c\|_T$ and $\|g\|_{T}$ whenever $\lambda_*>0$. Note also that if $\tilde{\bf u}(t,\cdot)$ is a strictly positive $T$-time periodic solution of \eqref{Eq1} satisfying hypothesis {\bf (H2)}, the linearization of \eqref{Eq1} at $\tilde{\bf u}$ is cooperative. In which case, it can be shown that $\tilde{\bf u}(t,\cdot)$ is linearly stable. Our next result shows that the following stronger result holds.
	
	\begin{tm}\label{TH4} Assume {\bf  (H1)} and suppose that \eqref{Eq1}  has a positive T-periodic solution $\tilde{\bf u}(t,x)$ satisfying {\bf (H2)}. Then, for every  classical solution ${\bf u}(\cdot,\cdot;{\bf u}_0,t_0)$ of \eqref{Eq1} with a positive initial data ${\bf u}_0$ and initial time $t_0\in\mathbb{R}$, it holds that  
		\begin{equation} \label{TH4-eq1} 
			\lim_{t\to\infty}\sup_{t_0\in\mathbb{R}}\|{\bf u}(t+t_0,\cdot;{\bf u}_0,t_0)-\tilde{\bf u}(t+t_0,\cdot)\|=0. 
		\end{equation}
		
	\end{tm}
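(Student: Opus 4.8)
The plan is to exploit the fact that, under \textbf{(H2)}, the positive periodic solution $\tilde{\bf u}$ makes the reaction coefficients $(r-c\tilde u_1)$ and $(s-g\tilde u_2)$ nonnegative, so that the system restricted to the order interval below $\tilde{\bf u}$ behaves like a cooperative system and a squeezing argument becomes available. First I would set up a pair of ordered sub/super-solutions. The super-solution is easy: since $G(t,{\bf u})\le {\bf 0}$ and $\mathcal{A}(t)$ is cooperative, any constant multiple $M{\bf 1}$ with $M$ large (in fact $M=\max\{\|r\|_\infty,\|s\|_\infty\}/\min\{b_{\min},f_{\min}\}$ works by the a priori bound \eqref{NN-Eq1}) dominates every solution eventually, and by comparison one reduces to initial data in the order interval $[{\bf 0}, M{\bf 1}]$. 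For the lower bound, Theorem \ref{TH2} gives $\eta_*{\bf 1}\le {\bf u}(t+t_0,\cdot)$ for all $t\ge T$; WLOG take $\eta_*$ small enough that $\eta_*{\bf 1}\le \tilde{\bf u}$ as well. Thus it suffices to prove \eqref{TH4-eq1} for initial data ${\bf u}_0$ with $\eta_*{\bf 1}\le {\bf u}_0\le M{\bf 1}$.

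Next I would introduce the two boundary orbits. Let $\overline{\bf u}(t,\cdot)$ be the solution of \eqref{Eq1} with initial data $M{\bf 1}$ and $\underline{\bf u}(t,\cdot)$ the solution with initial data $\eta_*{\bf 1}$, started at time $t_0$. The crucial structural observation is that \emph{on the order interval} $[\eta_*{\bf 1}, M{\bf 1}]$ — more precisely on $[{\bf 0},\tilde{\bf u}]$, which contains the eventual range of all these orbits by Theorem \ref{TH2} applied with comparison to $\tilde{\bf u}$ — the mapping ${\bf u}\mapsto \mathcal{A}(t){\bf u}+G(t,{\bf u})$ is quasi-monotone (cooperative): the off-diagonal reaction terms are $(r-cu_1)u_2$ and $(s-gu_2)u_1$, whose partial derivatives with respect to the other component are $r-cu_1\ge r-c\tilde u_1\ge 0$ and $s-gu_2\ge s-g\tilde u_2\ge 0$. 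Hence the comparison principle applies on this invariant region: the order interval $[\eta_*{\bf 1},\tilde{\bf u}]$ (and $[\tilde{\bf u}, M{\bf 1}]$, and $[\eta_*{\bf 1}, M{\bf 1}]$) is positively invariant, $\underline{\bf u}(t,\cdot)$ is nondecreasing along the discrete time-$T$ map while $\overline{\bf u}(t,\cdot)$ is nonincreasing, and by monotone-dynamical-systems theory (Dancer–Hess type connecting-orbit / convergence results for the period map, e.g.\ via $\omega$-limit sets of monotone maps being ordered equilibria) each converges as $t\to\infty$ along multiples of $T$ to a $T$-periodic solution: $\underline{\bf u}\to {\bf u}_-$ and $\overline{\bf u}\to {\bf u}_+$ with $\eta_*{\bf 1}\le {\bf u}_-\le \tilde{\bf u}\le {\bf u}_+\le M{\bf 1}$, and these limits are $T$-periodic positive entire solutions of \eqref{Eq1}. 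By the sandwich ${\bf u}_-\le {\bf u}(t+t_0,\cdot;{\bf u}_0,t_0)\le {\bf u}_+$ (valid once $t\ge T$, uniformly in $t_0$ by periodicity and the uniformity in Theorems \ref{TH1}–\ref{TH2}), it remains to prove ${\bf u}_-={\bf u}_+=\tilde{\bf u}$.

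The heart of the matter — and what I expect to be the main obstacle — is the uniqueness step: showing that $\tilde{\bf u}$ is the \emph{only} $T$-periodic positive entire solution lying in $[\eta_*{\bf 1},M{\bf 1}]$, equivalently ${\bf u}_-={\bf u}_+$. The natural tool is a sliding/scaling argument adapted to the two-stage structure. I would consider, for a periodic positive solution ${\bf w}$ with ${\bf w}\le \tilde{\bf u}$, the quantity $\theta^*:=\sup\{\theta\in(0,1]:\ \theta\tilde{\bf u}\le {\bf w}\}$ and aim to show $\theta^*=1$. Suppose $\theta^*<1$; then $\theta^*\tilde{\bf u}\le{\bf w}$ with equality in at least one component at some space-time point. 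Subtracting the equations satisfied by ${\bf w}$ and by $\theta^*\tilde{\bf u}$, and using the sublinearity built into \textbf{(H2)} (the reaction, on $[{\bf 0},\tilde{\bf u}]$, satisfies $G(t,\theta{\bf v})\ge \theta G(t,{\bf v})$ componentwise because $b,f\ge 0$ and the cross terms have the right sign once $r-cu_1,s-gu_2\ge0$), one gets that ${\bf z}:={\bf w}-\theta^*\tilde{\bf u}$ is a nonnegative supersolution of a cooperative linear periodic-parabolic system; the strong maximum principle / periodic-parabolic Harnack inequality then forces either ${\bf z}\equiv{\bf 0}$ (i.e.\ ${\bf w}=\theta^*\tilde{\bf u}$, which is impossible since $\theta^*\tilde{\bf u}$ is not itself a solution unless $\theta^*=1$, by plugging back into the equation and using $b_{\min},f_{\min}>0$) or ${\bf z}\gg{\bf 0}$, contradicting the definition of $\theta^*$ via a small perturbation. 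Hence $\theta^*=1$, so ${\bf w}\ge \tilde{\bf u}$; applying the same argument with the roles of ${\bf w}$ and $\tilde{\bf u}$ interchanged (now using ${\bf w}\ge \tilde{\bf u}$ from below and a symmetric $\sup\{\theta:\theta{\bf w}\le\tilde{\bf u}\}$ argument) yields ${\bf w}\le\tilde{\bf u}$, whence ${\bf w}=\tilde{\bf u}$. Applying this to both ${\bf u}_-$ and ${\bf u}_+$ gives ${\bf u}_-={\bf u}_+=\tilde{\bf u}$, and the squeeze completes the proof. I would be careful about two technical points: (i) that the argument genuinely needs the solutions to sit in $[{\bf 0},\tilde{\bf u}]$ (which is why Theorem \ref{TH2} and the invariance of $[\eta_*{\bf 1},\tilde{\bf u}]$ were established first, together with a symmetric treatment of $[\tilde{\bf u},M{\bf 1}]$ using that $\tilde{\bf u}$ is a subsolution-type barrier from below for orbits started above it); and (ii) the uniformity in $t_0\in\mathbb{R}$ in \eqref{TH4-eq1}, which follows because all estimates (the a priori bound, the persistence constant $\eta_*$, the monotone convergence rates of the period maps started from the fixed data $\eta_*{\bf 1}$ and $M{\bf 1}$) depend on $t_0$ only through $t_0 \bmod T$ by the $T$-periodicity of the coefficients, hence are uniform.
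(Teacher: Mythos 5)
Your proposal hinges on a monotone two‑sided sandwich between orbits started at $\eta_*{\bf 1}$ and at $M{\bf 1}$, and this is exactly where it breaks. The reaction in \eqref{Eq1} is cooperative only where $r-cu_1\ge 0$ and $s-gu_2\ge 0$, and hypothesis \textbf{(H2)} guarantees these nonnegativities \emph{only on} the order interval $[{\bf 0},\tilde{\bf u}]$. Above $\tilde{\bf u}$ the off‑diagonal derivatives can be negative, so neither the solution operator nor the period map is order‑preserving on $[\eta_*{\bf 1},M{\bf 1}]$ as you claim. Consequently there is no reason the orbit $\overline{\bf u}(t,\cdot)$ started at $M{\bf 1}$ should be nonincreasing along the period map, let alone converge monotonically to some periodic ${\bf u}_+\ge\tilde{\bf u}$, and the assertion that ``the eventual range of all these orbits'' lies in $[{\bf 0},\tilde{\bf u}]$ is simply false (Theorem \ref{TH2} gives a uniform lower bound, not an upper bound by $\tilde{\bf u}$; and even if the system were monotone, an orbit started above the unique periodic solution would stay above it for all time). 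The same defect infects the uniqueness step: your sliding argument comparing $\theta^*{\bf u}_+$ with $\tilde{\bf u}$ needs comparison along orbits that may lie above $\tilde{\bf u}$, where the cooperative structure is not available. This is precisely the structural difficulty flagged in the paper's introduction (cooperative when small, competitive when large), and it is why the routine Dancer--Hess sandwich for sublinear monotone periodic problems does not apply here.

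The paper's proof avoids the sandwich entirely and is worth studying for the trick it uses. It never asserts monotonicity of the flow off $[{\bf 0},\tilde{\bf u}]$. For initial data below $\tilde{\bf u}$, Lemma \ref{Lem-p-1} works inside the genuinely cooperative invariant region $[{\bf 0},\tilde{\bf u}]$ and proves convergence to $\tilde{\bf u}$ by tracking $\tilde\sigma(t)=\inf\{\sigma>1:\tilde{\bf u}\le\sigma\,{\bf u}\}$, showing it is nonincreasing and must tend to $1$ via a small exponential perturbation $e^{-\delta_* t}$ combined with uniform persistence. For a general positive ${\bf u}_0$, the proof of Theorem \ref{TH4} picks $\underline{\bf u}_0\le\min\{{\bf u}_0,\tilde{\bf u}\}$ and shows in Step 1 that ${\bf w}:={\bf u}-\underline{\bf u}\ge 0$ for all time. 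The key point you would need is that ${\bf w}$ solves a linear cooperative system whose off‑diagonal coefficients are $h_{1,2}=r-c\underline{u}_1$ and $h_{2,1}=s-g\underline{u}_2$, i.e.\ they depend on the \emph{smaller} orbit $\underline{\bf u}$, which stays in $[{\bf 0},\tilde{\bf u}]$ where \textbf{(H2)} gives the correct sign; this holds even though the large orbit ${\bf u}$ may exit the cooperative region. Finally, Step 2 shows ${\bf w}\to 0$ not by a monotone squeeze but by a contradiction argument: along a time‑shifted subsequence the limit ${\bf w}^\infty$ is a bounded entire solution of a linear cooperative system with a strictly negative extra damping $-\sigma_*$, and comparison with the entire solution $\tilde{\bf u}^\infty$ of the undamped system forces $\|{\bf w}^\infty(t,\cdot)\|\le C e^{-\sigma_*(t-t_0)}$ for all $t_0\le t$, hence ${\bf w}^\infty\equiv 0$ after letting $t_0\to-\infty$. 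To salvage your proposal along its current lines you would at minimum have to replace the upper barrier $M{\bf 1}$ and the claimed monotone decrease of $\overline{\bf u}$ with an argument that does not require cooperativity above $\tilde{\bf u}$; the mechanism the paper uses (cooperativity of the \emph{difference} equation plus the damped limiting comparison) is one way to do that.
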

	Theorem \ref{TH4} provides a sufficient condition for the uniqueness and stability of strictly positive $T$-periodic and bounded solution of \eqref{Eq1}. For practical applications, it would be helpful to find some way to verify whether hypothesis {\bf (H2)} holds. Our next result goes along this direction and provides sufficient conditions to construct examples for which hypothesis {\bf (H2)} holds.

	\begin{prop}\label{Cor2} Assume that {\bf (H1)} holds, $\lambda_*>0$, and  $(\tilde{r}, \tilde{s}) := (r/c, s/g)$  satisfies  
		\begin{equation}\label{Rk1-eq1}
			\begin{cases}
				\partial_t\tilde{r}\ge d_1\Delta\tilde{r}-(a+{s}+b\tilde{r})\tilde{r} & x\in\Omega,\ t\in\mathbb{R},\cr 
				\partial_{t}\tilde{s}\ge d_2\Delta\tilde{s}-(e+f\tilde{s})\tilde{s} & x\in\Omega,\ t\in\mathbb{R},\cr 
				\partial_{\vec{n}}\tilde{r}\ge 0,\ \partial_{\vec{n}}\tilde{s}\ge 0 & x\in\partial\Omega,\ t\in\mathbb{R}.
			\end{cases}
		\end{equation}
		Then system \eqref{Eq1} has a unique strictly positive entire $T$-periodic $\tilde{\bf u}$ solution satisfying {\bf (H2)}.  Furthermore, $\tilde{\bf u}$ is uniformly globally stable with respect to solutions with positive initial  in the sense of \eqref{TH4-eq1}. 
		
	\end{prop}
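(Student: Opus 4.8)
The plan is to read hypothesis \eqref{Rk1-eq1} as the assertion that $(\tilde r,\tilde s)=(r/c,s/g)$ is a time-periodic super-solution of \eqref{Eq1}, to pair it with a small multiple of the principal eigenfunction as a sub-solution, and to run a monotone iteration of the period-$T$ map to extract a $T$-periodic solution trapped in the order interval between the two; lying below $(\tilde r,\tilde s)$ is precisely hypothesis {\bf (H2)}, and then Theorem \ref{TH4} supplies global stability and, with it, uniqueness. (Implicit in \eqref{Rk1-eq1} is that $c$ and $g$ are bounded below by positive constants, so that $\tilde r,\tilde s$ are well defined, bounded, strictly positive and $T$-periodic, with the regularity the differential inequalities require; note $\tilde r\ge r_{\min}/c_{\sup}>0$ and $\tilde s\ge s_{\min}/g_{\sup}>0$.)

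First I would verify the super-solution property by substituting $(u_1,u_2)=(\tilde r,\tilde s)$ into the reaction terms of \eqref{Eq1}. Because $c\tilde r=r$, the coupling in the first equation cancels identically, $r\tilde s-c\tilde r\,\tilde s=\tilde s(r-c\tilde r)=0$, so the inequality
\[
\partial_t\tilde r\ \ge\ d_1\Delta\tilde r+r\tilde s-s\tilde r-(a+b\tilde r+c\tilde s)\tilde r
\]
is equivalent to the first line of \eqref{Rk1-eq1}. Likewise, since $g\tilde s=s$, the coupling in the second equation cancels, $s\tilde r-g\tilde s\,\tilde r=\tilde r(s-g\tilde s)=0$, so the corresponding super-solution inequality is equivalent to the second line of \eqref{Rk1-eq1}, while $\partial_{\vec{n}}\tilde r\ge 0,\ \partial_{\vec{n}}\tilde s\ge 0$ is its third line. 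Thus \eqref{Rk1-eq1} says exactly that $(\tilde r,\tilde s)\in\mathcal{X}_T^{++}\times\mathcal{X}_T^{++}$ is a $T$-periodic super-solution of \eqref{Eq1}.

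Next I would build a sub-solution from the principal eigenpair $(\lambda_*,\varphi)$ of \eqref{Eq2}. Since $\mathcal{A}(t)\varphi-\partial_t\varphi=\lambda_*\varphi$ with $\varphi\in\mathcal{X}_T^{++}\times\mathcal{X}_T^{++}$ and $\|\varphi\|_T=1$, one has $\partial_t(\varepsilon\varphi)-\mathcal{A}(t)(\varepsilon\varphi)-G(t,\varepsilon\varphi)=-\lambda_*\varepsilon\varphi+O(\varepsilon^2)$, which is $\le{\bf 0}$ once $\varepsilon>0$ is small, because $\lambda_*>0$ and the components of $\varphi$ are bounded below by positive constants; shrinking $\varepsilon$ further gives $\varepsilon\varphi\le(\tilde r,\tilde s)$, and $\varepsilon\varphi$ satisfies the Neumann condition. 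On the order interval $[\varepsilon\varphi,(\tilde r,\tilde s)]$ the reaction of \eqref{Eq1} is quasimonotone nondecreasing (there $r-cu_1\ge 0$ and $s-gu_2\ge 0$), this interval is positively invariant under the flow, and the period map $Q:{\bf v}\mapsto{\bf u}(T,\cdot;{\bf v},0)$ is compact, order-preserving, and maps $[\varepsilon\varphi(0,\cdot),(\tilde r(0,\cdot),\tilde s(0,\cdot))]$ into itself (using the $T$-periodicity of $\varepsilon\varphi$ and $(\tilde r,\tilde s)$ together with the sub/super-solution inequalities and monotonicity). Iterating $Q$ downward from $(\tilde r(0,\cdot),\tilde s(0,\cdot))$ produces a decreasing sequence that stays above $\varepsilon\varphi(0,\cdot)$; by compactness it converges to a fixed point $\gg{\bf 0}$, whose orbit is a strictly positive $T$-periodic entire solution $\tilde{\bf u}$ with $\varepsilon\varphi\le\tilde{\bf u}\le(\tilde r,\tilde s)$. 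The last inequality reads $r\ge c\tilde u_1$, $s\ge g\tilde u_2$, i.e.\ $\tilde{\bf u}$ satisfies {\bf (H2)}. Theorem \ref{TH4} then yields \eqref{TH4-eq1}; and uniform global stability forces uniqueness of the strictly positive $T$-periodic solution, since any such solution, used as initial data at some $t_0$, must converge to $\tilde{\bf u}$ and hence coincide with it by periodicity. In particular $\tilde{\bf u}$ is the unique one satisfying {\bf (H2)}.

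The point requiring genuine care is that \eqref{Eq1} is cooperative only below the surface $\{u_1=\tilde r,\ u_2=\tilde s\}$, so the comparison principle, the positive invariance of $[\varepsilon\varphi,(\tilde r,\tilde s)]$ and the monotonicity of $Q$ are not completely automatic. I expect to handle this in the standard way: either replace $G$ outside $[{\bf 0},(\tilde r,\tilde s)]$ by a globally quasimonotone nonlinearity that agrees with $G$ on $[{\bf 0},(\tilde r,\tilde s)]$ and check a posteriori that the relevant orbits never leave $[{\bf 0},(\tilde r,\tilde s)]$, or argue the invariance directly from the facts that $(\tilde r,\tilde s)$ is a super-solution and ${\bf 0}$ a sub-solution. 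The remaining ingredients --- the super/sub-solution computations above and the Dancer--Hess-type monotone iteration for a compact, order-preserving period map --- are routine.
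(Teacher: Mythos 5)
Your proposal is correct and reaches the same conclusion, but the route is genuinely different for the existence step. The paper's own proof establishes the invariance of $[{\bf 0},(\tilde r,\tilde s)]$ by a \emph{scalar} comparison on each component (treating $u_2$ as known when comparing $u_1$ with $\tilde r$, and vice versa), thereby sidestepping the fact that cooperativity of \eqref{Eq1} only holds inside the order interval; your observation that $(\tilde r,\tilde s)$ is literally a time-periodic super-solution of the full system is the conceptual reason the paper's component-wise computation works, and you correctly flag that invoking the cooperative comparison principle for the invariance requires either the paper's direct component-wise argument or a truncation trick. For the existence of a $T$-periodic solution lying below $(\tilde r,\tilde s)$, the paper reuses its machinery from Theorem \ref{TH3} -- intersecting $[{\bf 0},(\tilde r(0),\tilde s(0))]$ with the invariant set $\mathcal{M}^{0,\alpha}_{\gamma^*,\xi^*}$ of Lemma \ref{Lem2-7} to get a closed, bounded, convex set in $[X^{\alpha}]^2$ on which the Poincar\'e map is compact, then applying the Schauder fixed point theorem; you instead pair the super-solution with the explicit sub-solution $\varepsilon\varphi$ (justified correctly since $G(t,\varepsilon\varphi)=O(\varepsilon^2)$ and $\lambda_*>0$) and run a Dancer--Hess monotone iteration downward. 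Your approach is arguably more self-contained and constructive here (no need for Lemmas \ref{Lem2-4}--\ref{Lem2-7}), exploiting the cooperativity available on the invariant interval, whereas the paper's approach is uniform with its proof of Theorem \ref{TH3}, where no super-solution is available and cooperativity cannot be assumed. The concluding appeal to Theorem \ref{TH4} for global stability and hence uniqueness is identical in both.
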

	
	The next result is a consequence  of Theorem \ref{TH4} and Proposition \ref{Cor2}.
	
	\begin{coro}\label{Cor1} Assume that {\bf (H1)} holds,    $\lambda_*>0$, and  the functions $c/r$ and $g/s$ are constant. Then {\bf (H2)} holds and  \eqref{Eq1} has a unique  time $T$-periodic positive  solution $\tilde{\bf u}$. Furthermore, $\tilde{\bf u}$  is globally asymptotically stable with respect to all nonnegative and nontrivial solutions in the sense of \eqref{TH4-eq1}.     
	\end{coro}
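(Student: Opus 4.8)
The plan is to read Corollary~\ref{Cor1} off Proposition~\ref{Cor2}. Set $\tilde r:=r/c$ and $\tilde s:=s/g$. The hypothesis that $c/r$ and $g/s$ are constant forces $\tilde r$ and $\tilde s$ to be \emph{positive} constants: by {\bf (H1)} they are bounded (as $c,g$ are bounded and $\min\{r_{\min},s_{\min}\}>0$) and bounded away from $0$; I tacitly take $c,g>0$ here, the degenerate case where $c$ or $g$ vanishes identically being simpler and handled directly. For a constant function the time derivative, the Laplacian and the Neumann trace all vanish, so the three conditions in \eqref{Rk1-eq1} collapse to $0\ge-(a+s+b\tilde r)\tilde r$, $0\ge-(e+f\tilde s)\tilde s$ and $0\ge 0$, all of which hold since $a,b,e,f,s\ge 0$ and $\tilde r,\tilde s>0$. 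Proposition~\ref{Cor2} then gives at once that {\bf (H2)} holds, that \eqref{Eq1} has a unique strictly positive entire $T$-periodic solution $\tilde{\bf u}$, and that $\tilde{\bf u}$ attracts, uniformly in the sense of \eqref{TH4-eq1}, every solution issued from positive initial data.

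It remains to weaken ``positive initial data'' to ``nonnegative and nontrivial initial data''. If the stability statement of Proposition~\ref{Cor2} (equivalently, of Theorem~\ref{TH4}) is already understood for all ${\bf u}_0\in[C^+(\bar\Omega)]^2\setminus\{{\bf 0}\}$, there is nothing further to prove. Otherwise I would bridge the gap with Theorem~\ref{TH2}: since $\lambda_*>0$, it provides $\eta_*>0$ with $\eta_*{\bf 1}\le{\bf u}(t+t_0,\cdot;{\bf u}_0,t_0)$ for all $t\ge T$ and all $t_0\in\R$; in particular the shifted datum ${\bf v}_{t_0}:={\bf u}(t_0+T,\cdot;{\bf u}_0,t_0)$ satisfies $\eta_*{\bf 1}\le{\bf v}_{t_0}$, and by the uniform-in-$t_0$ global a priori bound on solutions (cf.\ \eqref{NN-Eq1}) also ${\bf v}_{t_0}\le M{\bf 1}$ for some $M>0$, so the whole family $\{{\bf v}_{t_0}\}_{t_0\in\R}$ sits in the fixed order interval $[\eta_*{\bf 1},M{\bf 1}]\subset[C^{++}(\bar\Omega)]^2$. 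By uniqueness of classical solutions, ${\bf u}(t+t_0,\cdot;{\bf u}_0,t_0)={\bf u}(t+t_0,\cdot;{\bf v}_{t_0},t_0+T)$ for all $t\ge T$, so \eqref{TH4-eq1} for ${\bf u}_0$ follows once the convergence of the strictly positive orbits $\{{\bf u}(\cdot,\cdot;{\bf v}_{t_0},t_0+T)\}_{t_0}$ to $\tilde{\bf u}$ is known to be uniform over the data ${\bf v}_{t_0}$ and over the initial times $t_0+T$. That uniformity is available because the comparison/squeezing argument behind Theorem~\ref{TH4} --- which traps the orbit between sub- and super-solutions of the form $\theta\tilde{\bf u}$, a direct computation using that $\tilde{\bf u}$ solves \eqref{Eq1} showing that $\theta\tilde{\bf u}$ is a subsolution for $\theta\in(0,1]$ and a supersolution for $\theta\ge 1$ --- yields a decay estimate that depends on the orbit only through its a priori lower and upper bounds $\eta_*,M$ and through the $T$-periodic coefficients; with those quantities fixed, the rate is the same for all of the ${\bf v}_{t_0}$ and, by $T$-periodicity, independent of $t_0$.

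I do not expect a serious obstacle. The substantive point is simply that the opaque sufficient condition \eqref{Rk1-eq1} of Proposition~\ref{Cor2} is met \emph{trivially} here, by constancy of $\tilde r,\tilde s$ and nonnegativity of the remaining coefficients. The only delicate step is the uniformity in $t_0$ used above, and even that is not an obstacle in substance: it is obtained by inspecting the quantitative content of the proof of Theorem~\ref{TH4} rather than merely quoting its statement, together with the fact that the persistence estimate of Theorem~\ref{TH2} makes the relevant a priori bounds uniform in the initial time. The genuine difficulty of the non-monotone structure of \eqref{Eq1} (cooperative only at low densities), which is the heart of Theorems~\ref{TH3}--\ref{TH4}, has already been resolved there and does not reappear at this stage.
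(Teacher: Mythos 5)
For the generic case where both $c>0$ and $g>0$, your argument matches the paper's: $\tilde r=r/c$ and $\tilde s=s/g$ are positive constants, so \eqref{Rk1-eq1} holds trivially and Proposition~\ref{Cor2} applies. Your bridging step reducing nonnegative nontrivial data to strictly positive data via Theorem~\ref{TH2} is a sensible supplement, consistent with the paper's usage of ``positive initial data'' meaning $[C^+(\bar\Omega)]^2\setminus\{\bf 0\}$.

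However, there is a real gap in your treatment of the degenerate cases. You lump together all situations where $c$ or $g$ vanishes and assert they are ``simpler and handled directly.'' The sub-case $c\equiv 0$ and $g\equiv 0$ is indeed trivial: then $r\ge c\tilde u_1$ and $s\ge g\tilde u_2$ hold for \emph{any} positive $T$-periodic solution furnished by Theorem~\ref{TH3}, so {\bf (H2)} is immediate. But the mixed case, say $c\equiv 0$ while $g=\tilde g\,s$ with $\tilde g>0$, is not at all trivial and is in fact the most delicate part of the paper's proof. Here you cannot invoke Proposition~\ref{Cor2} as you did, since $\tilde r=r/c$ is undefined, and to verify {\bf (H2)} you must still show that \emph{some} positive $T$-periodic solution $\tilde{\bf u}$ satisfies $\tilde g\,\tilde u_2\le 1$. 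The paper establishes this by taking the solution from Theorem~\ref{TH3} and observing that $\tilde u_2$ is a subsolution of the logistic-type equation $\partial_t v = d_2\Delta v + s(1-\tilde g\,v)\tilde u_1$, then invoking a comparison/stability argument for periodic scalar logistic equations to conclude $\tilde g\,\tilde u_2\le 1$. This is a substantive step, not a formality, and it does not appear anywhere in your proposal; without it your argument for the corollary is incomplete in the mixed degenerate case.
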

	\begin{rk} Thanks to Proposition \ref{Prop1}-{\rm (ii)} and Corollary \ref{Cor1}, if $\big(\frac{a+s}{r}\big)_{\max}\big(\frac{e}{s}\big)_{\max}<1$ and the functions $c/r$ and $g/s$ are constant, then every classical solution of \eqref{Eq1} with a positive initial data eventually converges to the unique $T$-periodic positive entire solution.
	\end{rk}
	\begin{rk} Thanks to Corollary \ref{Cor1}, when all the parameters are temporally and spatially constant, then every solution of \eqref{Eq1} eventually stabilizes either  (i)  at the trivial solution if $\lambda_*\le 0$, or (ii) at  the unique positive constant equilibrium solution if $\lambda_*>0$. In this case, we have that $\lambda_*$ is independent of population diffusion rates and the global dynamics of solution of the diffusive population model \eqref{Eq1} is determined by that of the corresponding kinetic system.
	\end{rk}
	
	\begin{exa} Thanks to Proposition \eqref{Cor2},  we present a few examples of parameters such that {\bf (H2)} holds whenever $\lambda_*>0$. First fix $a, b, e, f, c$ and $g$  as in {\bf (H1)}. Second, fix $\tilde{s}_0\in \mathcal{X}_T^+$ such that   the periodic logistic reaction diffusion equation
		\begin{equation}
			\begin{cases}
				\partial_t\tilde{s}=d_2\Delta \tilde{s} +(\tilde{s}_0-e-f \tilde{s}) \tilde{s} & x\in\Omega, 0<t<T\cr 
				0=\partial_{\vec{n}}\tilde{s} & x\in\partial\Omega,\ 0<t<T,\cr
				\tilde{s}(0,\cdot)=\tilde{s}(T,\cdot),
			\end{cases}
		\end{equation}
		has a   unique $T$-periodic positive entire solution, denoted as  $\tilde{s}^*(t,x)$.   Third, fix $\tilde{r}_0\in \mathcal{X}_T^+$ such that  the periodic logistic reaction diffusion equation 
		\begin{equation}
			\begin{cases}
				\partial_t\tilde{r}=d_1\Delta \tilde{r} +(\tilde{r}_0-a-b \tilde{r}) \tilde{r} & x\in\Omega,\ 0<t<T\cr 
				0=\partial_{\vec{n}}\tilde{r} & x\in\partial\Omega, \ 0<t<T,\cr
				\tilde{r}(0,\cdot)=\tilde{r}(T,\cdot),
			\end{cases}
		\end{equation}
		has a unique $T$-periodic positive entire solution, denoted as  $\tilde{r}^*(t,x)$. Then for $r=c\tilde{r}^*$ and $s=g\tilde{s}^*$ in \eqref{Eq1},  {\bf (H2)} holds whenever $\lambda_*>0$.    
	\end{exa}
	
	It is clear that when all the parameters are positive constants, then Proposition \ref{Cor2} is trivially applicable. It turns out that when the diffusion rates $d_1$ and $d_2$ are small, hypothesis {\bf (H2)} is also necessary. To be precise, we need the following hypothesis.
	
	\medskip
	
	{\bf (H3)} All the parameter functions are time independent, H\"older continuous on $\bar{\Omega}$, $e_{\min}>0$ and $(\frac{rs}{a+s}-e)_{\max}>0$.
	
	\medskip When  the environment is time homogeneous, it follows from \cite{CCM2020} that system \eqref{Eq1} has at least one positive steady-state  for small  diffusion rates of the population if $(\frac{rs}{a+s}-e)_{\max}>0$. However,  positive classical solutions of \eqref{Eq1} eventually go extinct for small diffusion rates of the population if   $(\frac{rs}{a+s}-e)_{\max}<0$. Hence, the fact that $(\frac{rs}{a+s}-e)_{\max}>0$ in {\bf (H3)} ensures the existence of positive steady-states of \eqref{Eq1} for small diffusion rates. Under hypotheses {\bf (H1)} and {\bf (H3)}, we have the following result on the uniqueness and global stability of positive steady state solutions of \eqref{Eq1}.

	\begin{tm}\label{TH5} Suppose that {\bf (H1)} and {\bf (H3)} hold. Then there is $d_0>0$ such that for every $0<d_1,d_2<d_0$, system \eqref{Eq1} has unique positive steady state solution ${\bf u}(\cdot;{\bf d})$. Furthermore, ${\bf u}(\cdot;{\bf d})$ is globally stable in the sense that for every classical solution ${\bf u}(t,x;{\bf u}_0)$ of \eqref{Eq1} with a positive initial data, it holds that 
		\begin{equation}
			\lim_{t\to\infty}\|{\bf u}(t,\cdot;{\bf u}_0)-{\bf u}(\cdot;{\bf d})\|=0.
		\end{equation}
		
	\end{tm}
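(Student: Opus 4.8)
The plan is to reduce the theorem to Theorem~\ref{TH4}. Existence of at least one positive steady state for small ${\bf d}=(d_1,d_2)$ is not the difficulty: under {\bf (H3)}, as ${\bf d}\to{\bf 0}$ the principal eigenvalue $\lambda_*$ converges to the maximum over $x\in\bar\Omega$ of the principal eigenvalue of the $2\times2$ cooperative matrix with rows $(-(a+s),r)$ and $(s,-e)$, which is positive exactly where $\tfrac{rs}{a+s}-e>0$; hence $\lambda_*>0$ for ${\bf d}$ small, and Theorem~\ref{TH3}(ii) (equivalently \cite{CCM2020}) supplies a positive steady state. The real content is the following.

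\medskip
\noindent\emph{Claim.} There is $d_0>0$ such that for every $0<d_1,d_2<d_0$, every positive steady state ${\bf u}={\bf u}(\cdot;{\bf d})$ of \eqref{Eq1} satisfies $c\,u_1\le r$ and $g\,u_2\le s$ on $\bar\Omega$; that is, it satisfies {\bf (H2)}.
\medskip

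Granting the Claim, the theorem follows from Theorem~\ref{TH4}: if ${\bf u}_1,{\bf u}_2$ are two positive steady states for the same ${\bf d}<d_0$, both satisfy {\bf (H2)}, so applying Theorem~\ref{TH4} with $\tilde{\bf u}={\bf u}_1$ to the stationary solution identically equal to ${\bf u}_2$ forces ${\bf u}_2={\bf u}_1$, while the global stability assertion is exactly the conclusion \eqref{TH4-eq1}.

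I would prove the Claim by a singular-perturbation argument. First, a uniform (in small ${\bf d}$) a priori bound: evaluating the stationary equations at interior maxima of $u_1$ and of $u_2$, where $\Delta u_i\le0$, and using $b_{\min},f_{\min}>0$, yields $\|u_1\|_\infty^2\le\tfrac{r_{\max}}{b_{\min}}\|u_2\|_\infty$ and $\|u_2\|_\infty^2\le\tfrac{s_{\max}}{f_{\min}}\|u_1\|_\infty$, hence $\|{\bf u}\|\le M$ with $M$ independent of ${\bf d}$. Next, the key convergence: along any sequence ${\bf d}^{(n)}\to{\bf 0}$ with positive steady states ${\bf u}^{(n)}$, one shows ${\bf u}^{(n)}\to{\bf U}^0$ \emph{uniformly} on $\bar\Omega$, where ${\bf U}^0(x)$ is the unique positive solution of the pointwise algebraic (kinetic) system on $\{x:\tfrac{rs}{a+s}>e\}$ and ${\bf U}^0(x)={\bf 0}$ elsewhere — a profile statement in the spirit of \cite{CCM2020,CCS2023}. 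Here uniqueness of the positive kinetic equilibrium (which makes ${\bf U}^0$ well defined and, extending by $0$, continuous on $\bar\Omega$, using $b_{\min},f_{\min}>0$) follows from elementary algebra: solving the juvenile equation for $u_1=G_x(u_2)$ and the adult equation for $u_2=K_x(u_1)$, each a strictly monotone, strictly concave branch through the origin with $c_xG_x<r_x$ and $g_xK_x<s_x$ along it, the positive equilibria are the positive zeros of the strictly concave function $u_1\mapsto G_x(K_x(u_1))-u_1$, which vanishes at $0$, has slope $\tfrac{rs}{(a+s)e}-1$ there, and tends to $-\infty$. In particular, every positive kinetic equilibrium satisfies $c_xU^0_1<r_x$ and $g_xU^0_2<s_x$ \emph{strictly}, so $r-cU^0_1$ and $s-gU^0_2$ are continuous and strictly positive on the compact set $\bar\Omega$, hence bounded below by some $\delta_0>0$; the uniform convergence ${\bf u}^{(n)}\to{\bf U}^0$ then gives $cu_1^{(n)}\le r$ and $gu_2^{(n)}\le s$ for all large $n$, which is the Claim.

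The main obstacle is precisely the uniform convergence ${\bf u}(\cdot;{\bf d})\to{\bf U}^0$ as ${\bf d}\to{\bf 0}$, together with the non-degeneracy ${\bf U}^0\not\equiv{\bf 0}$ on the persistence set. Elliptic estimates degenerate here — from $d_i\Delta u_i=-(\text{reaction})$ one only gets $\Delta u_i=O(d_i^{-1})$, so there is no uniform $W^{2,p}$ bound — so I would instead sandwich ${\bf u}(\cdot;{\bf d})$ between ordered sub/supersolution pairs collapsing uniformly onto ${\bf U}^0$: on $\{\tfrac{rs}{a+s}>e+\epsilon\}$, perturb ${\bf U}^0$ by $\pm\epsilon{\bf v}$ with ${\bf v}$ a positive eigenvector of the kinetic linearization at ${\bf U}^0$ — cooperative since $cU^0_1<r$, $gU^0_2<s$, and asymptotically stable because its determinant is positive by the equilibrium relations — absorbing the $O({\bf d})$ diffusion terms; near $\{\tfrac{rs}{a+s}\le e\}$, use that the kinetic flow is attracted to ${\bf 0}$ to build a supersolution tending to $0$, controlled via scalar comparison and the bound $M$; and a lower barrier built from $\lambda_*({\bf d})\ge c_0>0$ and the uniform persistence of Theorem~\ref{TH2} rules out collapse to ${\bf 0}$ on the persistence set. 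Patching these local barriers across the transition zone $\{\tfrac{rs}{a+s}\approx e\}$ is the delicate step; it is there that {\bf (H3)} — $e_{\min}>0$, Hölder regularity, and $(\tfrac{rs}{a+s}-e)_{\max}>0$ — is used in an essential way, and it parallels the accompanying analysis of the small-diffusion spatial profiles.
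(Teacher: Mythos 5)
Your high-level reduction is the same as the paper's: verify that some positive steady state satisfies \textbf{(H2)} and then invoke Theorem~\ref{TH4} (which gives both uniqueness and global stability in one stroke). However, the mechanism you propose for getting \textbf{(H2)} — proving that ${\bf u}(\cdot;{\bf d})\to{\bf U}^0$ \emph{uniformly on }$\bar\Omega$ as ${\bf d}\to{\bf 0}$ and then exploiting the strict kinetic inequalities $cU_1^0<r$, $gU_2^0<s$ — is not the paper's route, and it is precisely at this step that your proposal has a real gap. You correctly identify the difficulty (elliptic estimates degenerate, so no uniform $W^{2,p}$ control; the sub/supersolution barriers built from eigenvector perturbations of ${\bf U}^0$ are local, degenerate where ${\bf U}^0={\bf 0}$, and must be glued across the transition zone $\{rs/(a+s)=e\}$), but you do not resolve it, and it is not a routine gluing. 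Tellingly, the paper's own Theorem~\ref{TH6} — which is proved \emph{using} Theorem~\ref{TH5}, not the other way around — only yields \emph{locally} uniform convergence in $\Omega$, which would be insufficient for your Claim since it leaves the behavior near $\partial\Omega$ uncontrolled; the uniform-on-$\bar\Omega$ statement you need is a strictly stronger assertion.

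The paper avoids the singular-perturbation convergence entirely. It constructs an explicit super-solution $\tilde{\bf w}^{\varepsilon}$ (Lemmas~\ref{l1}--\ref{l3}) with the strict lower bounds $\big(r-c\tilde w_1^{\varepsilon}\big)_{\min}>0$ and $\big(s-g\tilde w_2^{\varepsilon}\big)_{\min}>0$ built in by design, then — since the semiflow is strongly cooperative on the order interval $[{\bf 0},\tilde{\bf w}^{\varepsilon}]$ (the cross-terms $r-cu_1$ and $s-gu_2$ stay positive there) — uses monotone-dynamics theory together with instability of ${\bf 0}$ to produce a positive steady state below $\tilde{\bf w}^{\varepsilon}$, which then automatically inherits \textbf{(H2)} from the super-solution. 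The technical device that makes this uniform across the transition zone is the use of the positive part $h_+^{\varepsilon}$ in the defining function $F^{\varepsilon}$: it guarantees $F^{\varepsilon}(x,0)\ge\varepsilon>0$ everywhere, so $w_2^{\varepsilon}>0$ and the whole construction is given by a single formula on $\bar\Omega$ — no patching. Two smaller remarks: your Claim (\emph{every} positive steady state satisfies \textbf{(H2)}) is stronger than needed — once one positive steady state satisfies \textbf{(H2)}, Theorem~\ref{TH4} already forces uniqueness; and for the Claim itself you only need a one-sided (upper) bound ${\bf u}(\cdot;{\bf d})\le{\bf U}^0+\delta$ uniformly, so the lower barrier you sketch plays no role in establishing \textbf{(H2)}. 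In short, your overall reduction is right, but the core step is unproved and the paper achieves the needed \textbf{(H2)} by a shorter, more robust one-sided super-solution argument rather than by full asymptotic profiles.
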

	
	Our approach to establish Theorem \ref{TH5} is to show that under hypotheses {\bf (H1)} and {\bf (H3)}, hypothesis {\bf (H2)} holds for the range of diffusion rates as selected in the statement  of the theorem. Hence, we see that Theorem \ref{TH5} is also a consequence of Theorem \ref{TH4}. Moreover,  under hypotheses {\bf (H1)} and {\bf (H3)}, Theorem \ref{TH5} indicates  that if both subpopulations move slowly, then the population will always stabilize at the unique positive steady state. An immediate question raised by Theorem \ref{TH5} is whether  the conclusions of the theorem still hold when the parameters are time dependent. We plan to further explore these questions in our future works.
	
	\medskip
	
	We complement Theorem \ref{TH5} with the following result on the spatial distributions of the population when the diffusion rates are small.
	
	\begin{tm}\label{TH6} Suppose that {\bf (H1)} and {\bf (H3)} hold. Let $d_0>0$ be as in Theorem \ref{TH5}. Then for every $0<d_1,d_2<d_0$, the unique positive steady state solution ${\bf u}(\cdot;{\bf d})$ of \eqref{Eq1} satisfies 
		\begin{equation}\label{TH6-eq1}
			\lim_{\max\{d_1,d_2\}\to 0}{\bf u}(x;{\bf d})={\bf u}^*(x) \quad \text{for}\ x\ \text{locally uniformly in}\ \Omega,
		\end{equation}
		where for each $x\in\bar{\Omega}$, ${\bf u}^*(x)=(u_1^*(x),u_2^*(x))$ is the unique nonnegative stable solution of the system of algebraic equations
		\begin{equation}\label{TH6-eq2}
			\begin{cases}
				0=r(x)u_2^*-(a(x)+s(x)+b(x)u_1^*+c(x)u_2^*)u_1^*\cr 
				0=s(x)u_1^*-(e(x)+f(x)u_2^*+g(x)u_1^*)u_2^*.
			\end{cases}
		\end{equation}
		
	\end{tm}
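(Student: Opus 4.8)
The plan is to prove \eqref{TH6-eq1} by a local ``freezing'' comparison: at each point $x_0\in\Omega$ the steady state ${\bf u}(\cdot;{\bf d})$ will be trapped, on a small ball around $x_0$, between positive solutions of two constant-coefficient problems whose reactions approximate the kinetic system \eqref{TH6-eq2} at $x_0$, and both of these will converge to ${\bf u}^*(x_0)$ upon letting first ${\bf d}\to{\bf 0}$ and then the freezing error go to $0$. The two facts that make this feasible are uniform a priori bounds and the cooperativity of \eqref{Eq1} on the order interval in which ${\bf u}(\cdot;{\bf d})$ actually lives.

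First I would record the a priori bounds. Evaluating \eqref{Eq1} at interior maxima of $u_1$ and $u_2$ and using $b_{\min},f_{\min}>0$ produces a constant $\overline M>0$, independent of ${\bf d}$, with $0<{\bf u}(\cdot;{\bf d})\le\overline M{\bf 1}$ for all $0<d_1,d_2<d_0$ (inequality \eqref{NN-Eq1}), the strict positivity coming from the strong maximum principle and $r_{\min},s_{\min}>0$. Next, by the proof of Theorem \ref{TH5}, hypothesis {\bf (H2)} holds for these ${\bf d}$; since ${\bf u}(\cdot;{\bf d})$ is itself a positive entire solution, Theorem \ref{TH4} forces it to coincide with the globally attracting $\tilde{\bf u}$ furnished by {\bf (H2)} (a time-independent trajectory that converges to a $T$-periodic one must equal it). Hence $r\ge c\,u_1(\cdot;{\bf d})$ and $s\ge g\,u_2(\cdot;{\bf d})$ on $\bar\Omega$, so ${\bf u}(\cdot;{\bf d})$ takes values in an order interval on which \eqref{Eq1} is cooperative, which is what makes the comparison machinery below applicable despite the competitive behaviour of \eqref{Eq1} at large densities.

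Now fix a compact $K\subset\Omega$, a point $x_0\in K$, and $\epsilon>0$ small. Choose $\rho=\rho(\epsilon)>0$ so that all parameters lie within $\epsilon$ of their $x_0$-values on $B:=B(x_0,\rho)$, and let $\underline{\bf F}^{\epsilon}\le{\bf F}(x,\cdot)\le\overline{\bf F}^{\epsilon}$ on $[{\bf 0},\overline M{\bf 1}]$ (for $x\in B$) be the constant-coefficient reaction terms obtained by replacing each parameter of \eqref{Eq1} by its $x_0$-value shifted by $\pm\epsilon$ in the direction making the inequality hold; both tend to ${\bf F}(x_0,\cdot)$ as $\epsilon\to0$. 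Writing ${\bf D}=\mathrm{diag}(d_1,d_2)$ and using the identity $r(x_0)\ge c(x_0)u_1^*(x_0)$ read off from the first equation of \eqref{TH6-eq2}, one checks that the constant vector $\hat{\bf v}^{\epsilon}:=\big(\tfrac{r(x_0)+\epsilon}{c(x_0)-\epsilon},\tfrac{s(x_0)+\epsilon}{g(x_0)-\epsilon}\big)$ is a supersolution of $\partial_t{\bf w}={\bf D}\Delta{\bf w}+\overline{\bf F}^{\epsilon}({\bf w})$ on which $\overline{\bf F}^{\epsilon}$ is cooperative, and that ${\bf u}(\cdot;{\bf d})\le\hat{\bf v}^{\epsilon}$ on $B$ by {\bf (H2)} (points with $c(x_0)=0$ or $g(x_0)=0$ are handled directly, the corresponding competition coupling being negligible near $x_0$). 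Then: for the upper estimate, ${\bf u}(\cdot;{\bf d})|_B$ is a subsolution of $0={\bf D}\Delta{\bf w}+\overline{\bf F}^{\epsilon}({\bf w})$ in $B$, ${\bf w}=\hat{\bf v}^{\epsilon}$ on $\partial B$, so comparison on the cooperative interval $[{\bf 0},\hat{\bf v}^{\epsilon}]$ gives ${\bf u}(\cdot;{\bf d})|_B\le\overline{\bf W}^{\epsilon}_{\bf d}$, its maximal solution, and $\overline{\bf W}^{\epsilon}_{\bf d}\to\overline{\bf e}^{\epsilon}$ locally uniformly in $B$ as ${\bf d}\to{\bf 0}$, where $\overline{\bf e}^{\epsilon}$ is the maximal (stable) equilibrium of $\dot{\bf w}=\overline{\bf F}^{\epsilon}({\bf w})$; for the lower estimate, when ${\bf u}^*(x_0)\ne{\bf 0}$ the Jacobian $D_{\bf u}\underline{\bf F}^{\epsilon}({\bf 0})$ has negative determinant for $\epsilon$ small, so the Dirichlet problem $0={\bf D}\Delta{\bf w}+\underline{\bf F}^{\epsilon}({\bf w})$ in $B$, ${\bf w}={\bf 0}$ on $\partial B$, has a positive solution $\underline{\bf W}^{\epsilon}_{\bf d}$ for ${\bf d}$ small (positive principal eigenvalue), and since ${\bf u}(\cdot;{\bf d})|_B$ is a supersolution of it with positive boundary values, a sweeping argument from below yields ${\bf u}(\cdot;{\bf d})|_B\ge\underline{\bf W}^{\epsilon}_{\bf d}\to\underline{\bf e}^{\epsilon}$ locally uniformly in $B$ (when ${\bf u}^*(x_0)={\bf 0}$ this estimate is void). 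Letting ${\bf d}\to{\bf 0}$ and then $\epsilon\to0$, and using that $\overline{\bf e}^{\epsilon},\underline{\bf e}^{\epsilon}\to{\bf u}^*(x_0)$ by continuous dependence of the hyperbolic stable equilibrium of \eqref{TH6-eq2} on the $\epsilon$-perturbation (this also gives uniqueness of ${\bf u}^*(x_0)$ and its continuity in $x_0$, so $K$ need not avoid the threshold $\{\tfrac{rs}{a+s}=e\}$), the two estimates pinch ${\bf u}(\cdot;{\bf d})$ onto ${\bf u}^*$, and uniformity on $K$ follows since $\rho$ and all constants can be chosen uniformly over $K$.

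The hard part will be dealing with the non-monotonicity of \eqref{Eq1}: comparison principles and the monotone iteration/sweeping used above are legitimate only inside order intervals where the system is cooperative, i.e.\ where $u_1\le r/c$ and $u_2\le s/g$. Confining ${\bf u}(\cdot;{\bf d})$ to that regime is precisely why {\bf (H2)} (valid under {\bf (H3)} for $d_i<d_0$ by the proof of Theorem \ref{TH5}) and the local barrier $\hat{\bf v}^{\epsilon}$ are needed. A secondary technical point is the two-scale limit --- first ${\bf d}\to{\bf 0}$ with coefficients frozen, then $\epsilon\to0$ --- together with the robustness of the stable kinetic equilibrium, which is where the meaning of ``stable'' in \eqref{TH6-eq2} has to be made precise (linear stability, equivalently non-degeneracy of the Jacobian); one should also establish, as a short planar-ODE lemma, that \eqref{TH6-eq2} has a unique nonnegative stable equilibrium for each $x$. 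Parts of the small-diffusion asymptotics could alternatively be imported from \cite{CCM2020,CCS2023}.
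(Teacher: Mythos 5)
Your approach is genuinely different from the paper's. You localize: around each $x_0$ you freeze the coefficients on a small ball $B$, bound the reaction between two constant-coefficient cooperative systems $\underline{\bf F}^{\epsilon}\le{\bf F}(x,\cdot)\le\overline{\bf F}^{\epsilon}$, and trap ${\bf u}(\cdot;{\bf d})|_B$ between solutions of Dirichlet problems on $B$ whose small-diffusion limits pinch onto the kinetic stable equilibrium. The paper instead works globally: it builds an explicit super-solution $\tilde{\bf w}^{\varepsilon}$ on all of $\Omega$ (Lemmas \ref{l1}--\ref{l3}), shows $\tilde{\bf w}^{\varepsilon}\to{\bf w}^0={\bf u}^*$ (Lemma \ref{l5}), and then introduces the auxiliary system \eqref{A10} in which the problematic competitive terms $cu_1u_2$, $gu_1u_2$ are replaced by $c\tilde{w}_1^{\varepsilon}v_2$, $g\tilde{w}_2^{\varepsilon}v_1$ — a genuinely linear coupling — so that \eqref{A10} is cooperative on the entire positive cone by construction; ${\bf u}(\cdot;{\bf d})$ is then a super-solution of \eqref{A10} because it sits below $\tilde{\bf w}^{\varepsilon}$, and the small-diffusion limit of \eqref{A10} is imported directly from \cite{CCM2020}. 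Both approaches correctly identify the same structural fulcrum, namely that the proof of Theorem \ref{TH5} yields {\bf (H2)} and thus confines ${\bf u}(\cdot;{\bf d})$ to a cooperative regime.

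There is, however, a genuine gap in your lower estimate that the paper's device is designed precisely to avoid. You want to conclude ${\bf u}(\cdot;{\bf d})|_B\ge\underline{\bf W}^{\epsilon}_{\bf d}$ from the facts that ${\bf u}(\cdot;{\bf d})$ is a super-solution of the frozen problem and a sweeping family of subsolutions rises from ${\bf 0}$. The comparison requires cooperativity of $\underline{\bf F}^{\epsilon}$ along the segment joining $\underline{w}_{\sigma}$ and ${\bf u}(x;{\bf d})$: the relevant off-diagonal, $\partial_{u_2}\underline{\bf F}^{\epsilon}_1=(r(x_0)-\epsilon)-(c(x_0)+\epsilon)u_1$, is nonnegative only for $u_1\le\frac{r(x_0)-\epsilon}{c(x_0)+\epsilon}$. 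But {\bf (H2)} gives you only $u_1(x;{\bf d})\le r(x)/c(x)$, which on $B$ can be as large as roughly $\frac{r(x_0)+\epsilon}{c(x_0)-\epsilon}$ — strictly above the cooperative threshold. Shrinking $\rho$ does not close this: the offending excess is of the same order $\epsilon$ as the freezing error itself. Without cooperativity on the full order interval containing both functions, the sweeping/strong maximum principle argument does not go through as stated. The natural repair is exactly the paper's move: in the frozen lower problem, replace the quadratic coupling $(c(x_0)+\epsilon)u_1u_2$ by the linear $(c(x_0)+\epsilon)\hat v^{\epsilon}_1 u_2$ (and likewise $(g(x_0)+\epsilon)u_1u_2\mapsto(g(x_0)+\epsilon)\hat v^{\epsilon}_2 u_1$); the frozen system then becomes cooperative on all of $[{\bf 0},\overline M{\bf 1}]$, ${\bf u}(\cdot;{\bf d})$ remains a super-solution of it, and the pinching can proceed. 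Finally, the ``short planar-ODE lemma'' — uniqueness and continuity in $x$ of the stable nonnegative solution of \eqref{TH6-eq2} — is not short in the paper: Lemmas \ref{l1}, \ref{l2} and \ref{l5} construct it explicitly via $G^{\varepsilon}$, $w_1^{\varepsilon}$, $F^{\varepsilon}$, and this groundwork is also what supplies the $\varepsilon$-uniform lower bounds $(r-cw_1^{\varepsilon})_{\min}\ge m_{\tau_2}$ and $(s-gw_2^{\varepsilon})_{\min}\ge\tilde m_{\tau_2}$ needed elsewhere. If you do the ODE analysis from scratch, take care to handle the degenerate set $\{rs=e(a+s)\}$ where the positive branch merges with ${\bf 0}$.
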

	
	Theorem \ref{TH6} establishes the asymptotic profiles of the steady state solutions of \eqref{Eq1} when the diffusion rates are small and the environments is temporally constant. This result improves the previous results in \cite[Theorem 1]{CCM2020} and \cite[Theorem 2.4]{CCS2023}, where Theorem \ref{TH6} was established under some smallest assumptions on $c$ and $g$. Here we require no restriction on the functions $c$ and $g$.  Theorem \ref{TH6} indicates that when both the juveniles and adults diffuse slowly, then the population will concentrate on the sites where the product of the maturity and reproduction rates $r(x)s(x)$ exceeds the product of the local   cumulative lost rates $(a(x)+s(x))e(x)$. Hence, when the population diffuses slowly, the intra-stage and inter-stage competition rates play no role on its survival. This is similar to what it is known for unstructured population models of logistic-type  reaction terms (\cite[Proposition 3.16]{CC2003}). It would be interesting to know the extend to which the conclusion of Theorem \ref{TH6} holds when the environment is both temporally and spatially heterogeneous.
	
	\section{Proofs of the Main Results}\label{sec3}
	Throughout the rest of this work, we shall always suppose that {\bf (H1)} holds. We collect a few preliminary results. First, set 
	$$M:=\frac{ \varphi_{\max}}{ \varphi_{\min}}\quad  \text{where}\quad  \varphi_{\max}=\max_{x\in\overline{\Omega},t\in[0,T], i=1,2}\varphi_i(t,x)\quad \text{and}\quad   \varphi_{\min}:=\min_{x\in\overline{\Omega},t\in[0,T],i=1,2}\varphi_{i}(t,x).
	$$
	Let $\{{\bf U}(t,s)\}_{t>s}$ be the evolution operator induced by solutions of \eqref{Eq0-2}. Then,
	\begin{equation}\label{Ra1}
		\|{\bf U}(t,s)\|\le Me^{\lambda_{*}(t-s)}\quad \forall\ t\ge s.
	\end{equation}
	Indeed, let ${\bf u}\in [C(\bar{\Omega})]^2$. Observing that
	$$
	-\frac{\|{\bf u}\|}{\varphi_{\min}}{ \varphi}(s,\cdot)\le {\bf u}\le \frac{\|{\bf u}\|}{\varphi_{\min}}{ \varphi}(s,\cdot)\quad \forall\ s\in\mathbb{R},
	$$
	then,  since ${\bf U}(t+s,s)$ is positive,
	\begin{align*}
		{\bf U}(t+s,s)\Big(-\frac{\|{\bf u}\|}{\varphi_{\min}}{ \varphi}(s,\cdot)\Big)\le {\bf U}(t+s,s){\bf u} 
		\le {\bf U}(t+s,s)\Big(\frac{\|{\bf u}\|}{\varphi_{\min}}{ \varphi}(s,\cdot)\Big)\quad \forall\ t\ge 0,\ s\in\mathbb{R}.
	\end{align*}
	Recalling that ${\bf U}(t+s,s)({ \varphi}(s,\cdot))=e^{\lambda_{*}t}{ \varphi}(t+s,\cdot)$, we deduce that
	\begin{equation}\label{Eq0-6}
		-\frac{\|{\bf u}\|}{\varphi_{\min}}e^{\lambda_{*}t}{ \varphi}(t+s,\cdot)\le {\bf U}(t+s,s){\bf u}\le \frac{\|{\bf u}\|}{\varphi_{\min}}e^{\lambda_{*}t}{ \varphi}(t+s,\cdot)\quad \forall\ t\ge 0, \ s\in\mathbb{R}. 
	\end{equation}
	Since ${\bf u}\in [C(\overline{\Omega})]^2$ is arbitrary, then
	\begin{equation*}
		\|{\bf U}(t+s,s)\|\le \frac{\varphi_{\max}}{\varphi_{\min}}e^{\lambda_{*}t}\quad t\ge 0,\ s\in\mathbb{R},
	\end{equation*}
	which yields  \eqref{Ra1}. Now, by  \eqref{Ra1} and the fact ${\bf 0}\le {\bf u}(t+t_0,\cdot;{\bf u}_0,t_0)\le {\bf U}(t+t_0,t_0){\bf u}_0$, $t\ge 0$, $t_0\in\mathbb{R}$, ${\bf u}_0\in[C^+(\bar{\Omega})]^2$, we have  
	
	\begin{equation}\label{Eq0-5}
		\| {\bf u}(t+t_0,\cdot,{\bf u}_0,t_0)\|\le Me^{t\lambda_{*}}\|{\bf u}_0\|\quad t\ge 0,\ t_0\in\mathbb{R},\quad \text{and}\ {\bf u}_0\in[C^+(\bar{\Omega})]^2.
	\end{equation}
	
	\medskip
	
	It is clear from \eqref{Eq0-5} that solutions of the initial value problem \eqref{Eq1} are globally bounded in time if $\lambda_*\le 0$. To see that this global boundedness also holds when $\lambda_*>0$, we note that, given ${\bf u}_0\in [C^+(\bar{\Omega})]^2$, then for every $t_0\in\mathbb{R}$, ${\bf u}(t+t_0,\cdot;{\bf u}_0,t_0)$ satisfies 
	\begin{equation*}
		\begin{cases}
			\partial_tu_1\le d_1\Delta u_1 +\|r\|_{\infty}u_2 -b_{\min}u_1^2 & x\in\Omega,\ t>0,\cr 
			\partial_tu_2\le d_2\Delta u_2 +\|s\|_{\infty}u_1-f_{\min}u_2^2 & x\in\Omega,\ t>0,\cr
			0=\partial_{\vec{n}}{\bf u} &x\in\partial\Omega,\ t>0.
		\end{cases}
	\end{equation*}
	Thus, observing that the constant function $\bar{\bf u}(t,x)=M{\bf 1}$ for all $t\ge 0$, where $M=\max\{\|{\bf u}_0\|, \frac{\max\{\|r\|_{\infty},\|s\|_{\infty}\}}{\min\{b_{\min},f_{\min}\}}\}$ satisfies
	\begin{equation*}
		\begin{cases}
			\partial_t\bar{u}_1\ge d_1\Delta \bar{u}_1 +\|r\|_{\infty}\bar{u}_2 -b_{\min}\bar{u}_1^2 & x\in\Omega,\ t>0,\cr 
			\partial_t\bar{u}_2\ge d_2\Delta \bar{u}_2 +\|s\|_{\infty}\bar{u}_1-f_{\min}\bar{u}_2^2 & x\in\Omega,\ t>0,\cr
			0=\partial_{\vec{n}}\bar{\bf u} &x\in\partial\Omega,\ t>0,
		\end{cases}
	\end{equation*}
	and ${\bf u}_0\le \bar{\bf u}$, we can employ the comparison principle for parabolic cooperative systems to conclude that ${\bf u}(t+t_0,\cdot;{\bf u}_0,t_0)\le \bar{\bf u}(t,\cdot)$ for all $t\ge 0$, from which it follows that 
	\begin{equation}\label{NN-Eq1}
		\|{\bf u}(t+t_0,\cdot;{\bf u}_0,t_0)\|\le \max\Big\{\|{\bf u}_0\|, \frac{\max\{\|r\|_{\infty},\|s\|_{\infty}\}}{\min\{b_{\min},f_{\min}\}}\Big\}\quad \forall\ t_0\in\mathbb{R} \ \text{and}\ t\ge 0.
	\end{equation}

	\subsection{Proof of Theorem \ref{TH1}}

	Note from \eqref{Eq0-5} that ${\bf u}(t+t_0,\cdot,{\bf u}_0,t_0)$ converges exponentially  to zero uniformly in $t_0\in\mathbb{R}$ if $\lambda_{*}<0$. To handle the case of $\lambda_*=0$, we start with the following lemma.
	\begin{lem}\label{lem1}
		Let $\lambda_{*}$ be the principal eigenvalue  of \eqref{Eq2} and $\varphi$ be the corresponding strictly positive eigenfunction satisfying $\|\varphi\|_T = 1$. Let $ {\bf u}(t,\cdot;{\bf u}_0,t_0)$ be a classical solution of  \eqref{Eq1} with a positive initial data ${\bf u}_{0}\in[C^+(\bar{\Omega})]^2$ and initial time $t_0\in\mathbb{R}$.  For every $t\ge 0$, define
		\begin{equation}\label{def-c}
			\sigma(t,{\bf u}_{0}) = \inf\{ \sigma >0 :  {\bf u}(t+t_0,\cdot;{{\bf u}_0,t_0})\le \sigma e^{\lambda_{*} t}{ \varphi}(t+t_0,\cdot),\  \forall\ t_0\in\mathbb{R}\}.
		\end{equation}
		Then, ${\sigma}(t,{\bf u}_0)\in (0,\infty)$ for every $t\ge 0$ and the function $[0,\infty)\ni t\mapsto \sigma(t,{\bf u}_0)$ is nonincreasing.
	\end{lem}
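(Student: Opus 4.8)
The plan is to deduce all three assertions from the order-preserving property of the evolution operator $\{{\bf U}(t,s)\}_{t\ge s}$ induced by \eqref{Eq0-2}, the identity ${\bf U}(t+s,s)\varphi(s,\cdot)=e^{\lambda_*t}\varphi(t+s,\cdot)$, and the inequality $G(t,{\bf u})\le{\bf 0}$ valid for ${\bf u}\ge{\bf 0}$. First I would record two preliminary facts. Since the coefficients of \eqref{Eq1} and the eigenfunction $\varphi$ are $T$-periodic, uniqueness of classical solutions gives ${\bf u}(t+t_0+T,\cdot;{\bf u}_0,t_0+T)={\bf u}(t+t_0,\cdot;{\bf u}_0,t_0)$ for every $t\ge 0$, so in \eqref{def-c} the quantification ``for all $t_0\in\mathbb{R}$'' may be replaced by ``for all $t_0\in[0,T]$''; continuity in $(t_0,x)$ of both sides of the defining inequality then shows that the set of admissible $\sigma$ is closed in $(0,\infty)$, so the infimum in \eqref{def-c} is attained whenever it is finite.

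Next I would settle $\sigma(t,{\bf u}_0)\in(0,\infty)$. For finiteness, fix $t\ge 0$; since $\varphi\ge\varphi_{\min}{\bf 1}$ with $\varphi_{\min}>0$, one has ${\bf u}_0\le\tfrac{\|{\bf u}_0\|}{\varphi_{\min}}\varphi(t_0,\cdot)$ for every $t_0$, and since $G(s,{\bf u}(s,\cdot;{\bf u}_0,t_0))\le{\bf 0}$, the comparison principle for the cooperative system, the positivity of ${\bf U}$, and the eigenfunction identity give
$$ {\bf u}(t+t_0,\cdot;{\bf u}_0,t_0)\le{\bf U}(t+t_0,t_0){\bf u}_0\le{\bf U}(t+t_0,t_0)\Big(\tfrac{\|{\bf u}_0\|}{\varphi_{\min}}\varphi(t_0,\cdot)\Big)=\tfrac{\|{\bf u}_0\|}{\varphi_{\min}}e^{\lambda_*t}\varphi(t+t_0,\cdot), $$
so $\sigma(t,{\bf u}_0)\le\|{\bf u}_0\|/\varphi_{\min}<\infty$. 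For strict positivity, at $t=0$ we get $\sigma(0,{\bf u}_0)\ge\max_{i,x}u_{0,i}(x)/\varphi_i(0,x)>0$ because ${\bf u}_0\not\equiv{\bf 0}$, while for $t>0$ the couplings $r,s$ being strictly positive under {\bf (H1)} makes the cooperative system irreducible, so the strong maximum principle gives ${\bf u}(t+t_0,\cdot;{\bf u}_0,t_0)\gg{\bf 0}$ for every $t_0$, whence $\sigma(t,{\bf u}_0)\ge\min_{i,x}u_i(t+t_0,x;{\bf u}_0,t_0)/(e^{\lambda_*t}\varphi_{\max})>0$.

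The core of the lemma is the monotonicity, which I would get by a restarting argument. Fix $0\le t_1<t_2$ and set $\sigma_1:=\sigma(t_1,{\bf u}_0)$; by the closedness noted above, $\sigma_1$ itself is admissible, i.e. ${\bf u}(t_1+t_0,\cdot;{\bf u}_0,t_0)\le\sigma_1 e^{\lambda_*t_1}\varphi(t_1+t_0,\cdot)$ for all $t_0\in\mathbb{R}$. Fix an arbitrary $t_0$, put $s_0:=t_1+t_0$ and ${\bf w}:={\bf u}(s_0,\cdot;{\bf u}_0,t_0)$, so that ${\bf w}\le\sigma_1 e^{\lambda_*t_1}\varphi(s_0,\cdot)$. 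By uniqueness of classical solutions, ${\bf u}(s_0+\tau,\cdot;{\bf u}_0,t_0)={\bf u}(s_0+\tau,\cdot;{\bf w},s_0)$ for $\tau\ge 0$; taking $\tau=t_2-t_1$, the comparison principle on $[s_0,\infty)$, followed by the positivity and linearity of ${\bf U}$ and the eigenfunction identity, give
$$ {\bf u}(t_2+t_0,\cdot;{\bf u}_0,t_0)\le{\bf U}(t_2+t_0,s_0){\bf w}\le\sigma_1 e^{\lambda_*t_1}{\bf U}(t_2+t_0,s_0)\varphi(s_0,\cdot)=\sigma_1 e^{\lambda_*t_2}\varphi(t_2+t_0,\cdot). $$
As $t_0$ was arbitrary, $\sigma_1$ is admissible at time $t_2$, hence $\sigma(t_2,{\bf u}_0)\le\sigma_1=\sigma(t_1,{\bf u}_0)$, the asserted monotonicity.

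I expect the two delicate points to be: first, making sure the infimum in \eqref{def-c} is attained, since the monotonicity step uses the bound at time $t_1$ \emph{with} $\sigma=\sigma_1$ (alternatively one may run that step with $\sigma_1+\epsilon$ in place of $\sigma_1$ and let $\epsilon\downarrow 0$, which sidesteps the issue); and second, invoking the correct strong maximum principle for the irreducible cooperative system to secure ${\bf u}(t+t_0,\cdot)\gg{\bf 0}$ for $t>0$ --- one may instead bound ${\bf u}$ from below by the solution of a linear cooperative system with the same strictly positive couplings $r,s$ and argue componentwise. The rest is a routine chain of comparisons resting on ${\bf U}(t+s,s)\varphi(s,\cdot)=e^{\lambda_*t}\varphi(t+s,\cdot)$.
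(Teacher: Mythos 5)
Your proposal is correct and follows essentially the same route as the paper: finiteness of $\sigma$ via the bound $\sigma(t,{\bf u}_0)\le\|{\bf u}_0\|/\varphi_{\min}$ coming from the comparison with ${\bf U}(t+t_0,t_0){\bf u}_0$ and the eigenfunction identity, and monotonicity by restarting at time $t_1$ and comparing ${\bf u}(t+t_1+t_0,\cdot)$ (a subsolution of the linear cooperative system, since $G\le{\bf 0}$) against $\sigma(t_1,{\bf u}_0)e^{\lambda_*(t+t_1)}\varphi(t+t_1+t_0,\cdot)$. You are somewhat more explicit than the paper on two minor points — that the admissible set for $\sigma$ is closed (so the infimum is attained; the paper uses this implicitly when writing $\tilde{\bf v}(0,\cdot)\ge\hat{\bf v}(0,\cdot)$ with $\sigma(\tau,{\bf u}_0)$ itself) and that $\sigma(t,{\bf u}_0)>0$ (which the paper asserts but does not justify) — but the core argument is the same.
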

	\begin{proof}  Fix ${\bf u}_0\in [C^+(\bar{\Omega})]^2\setminus\{\bf 0\}$ and we proceed in two steps.
		
		\medskip
		
		\noindent {\bf Step 1.} As ${\bf 0}\le {\bf u}(t+t_0,\cdot;{\bf u}_0,t_0)\le {\bf U}(t+t_0,t_0){\bf u}_0$, for every  $t\ge 0$ and   $t_0\in\mathbb{R}$, it follows from   \eqref{Eq0-6} that  $\sigma(t,{\bf u}_0)$ is well defined for all $t\ge 0$.  Note again from \eqref{Eq0-6} that 
		$$
		{\bf u}(t+t_0,\cdot;{\bf u}_0,t_0)\le \frac{\|{\bf u}_0\|}{\varphi_{\min}}e^{t\lambda_{*}}{ \varphi}(t+t_0)\quad \forall\ t\ge 0, \ t_0\in\mathbb{R}.
		$$
		Thus, $0<\sigma(t,{\bf u}_0)\le \|{\bf u}_0\|/\varphi_{\min}$ for every $t\ge 0$.

		\medskip
		
		\noindent{\bf Step 2.}  We show that $\sigma(t,{\bf u}_{0}) \leq \sigma(\tau,{\bf u}_{0})$ whenever $t\ge \tau\ge 0$. To this end, fix $\tau\ge 0$ and  $t_0\in\mathbb{R}$, and set 
		$$
		\tilde{\bf v}(t):= \sigma(\tau,{\bf u}_{0})e^{\lambda_{*}(t+\tau)} { \varphi}(t+t_{0}+\tau,\cdot)\quad \forall\ t>0.
		$$
		Recall that $\Phi(t,\cdot)=e^{t\lambda_{*}}{ \varphi}(t,\cdot)$ satisfies \eqref{Eq0-2}. Hence 
		\begin{equation}\label{g5}
			\partial_t\Phi(t+t_{0}+\tau,\cdot)=\mathcal{A}(t+t_0+\tau,\cdot )\Phi(t+t_0+\tau,\cdot)  \quad \forall\ t\ge 0. 
		\end{equation} 
		Multiplying this equation by $\sigma(\tau,{\bf u}_0)e^{-\lambda_{*}t_0}$, we get
		
		$$
		\partial_t(\sigma(\tau,{\bf u}_0)e^{-\lambda_{*}t_0}\Phi(t+t_{0}+\tau,\cdot))=\mathcal{A}(t+t_0+\tau,\cdot )(c(\tau,{\bf u}_0)e^{-\lambda_{*}t_0}\Phi(t+t_0+\tau,\cdot)) 
		$$ 
		because  $\sigma(\tau,{\bf u}_{0})e^{-t_0\lambda_{*}}$ is constant and independent of $t$. Observing that  
		\begin{align*}
			\tilde{\bf v}(t,\cdot)=\sigma(\tau,{\bf u}_0)e^{\lambda_{*}(t+\tau)}{ \varphi}(t+\tau+t_0,\cdot)= \sigma(\tau,{\bf u}_0,t_0)e^{-\lambda_{*}t_0}\Phi(t+t_0+\tau,\cdot)\,\quad  t\ge 0,
		\end{align*}
		so  $\tilde{\bf v}(t,\cdot)$ satisfies 
		\begin{align}\label{R-eq1}
			\begin{cases}
				\partial_{t}\tilde{\bf v}(t,\cdot)=\mathcal{A}(t+\tau+t_0,\cdot)\tilde{\bf v}(t,\cdot) & x\in\Omega,\ t>0,\cr
				0=\partial_{\vec{n}}\tilde{\bf v}(t,\cdot) & x\in\partial\Omega, t>0\cr
				\tilde{\bf v}(0,\cdot)=\sigma(\tau,{\bf u}_0,t_0)e^{-\lambda_{*}\tau}{ \varphi}(\tau+t_0,\cdot) & t=0, x\in\overline{\Omega}.
			\end{cases}
		\end{align}
		On the other hand, setting $\hat{\bf v}(t,\cdot)= {\bf u}(t+t_0+\tau,\cdot;{\bf u}_0,t_0)$, $t\ge 0$, we obtain  
		\begin{equation}\label{R-eq2}
			\begin{cases}
				\partial_t\hat{\bf v}(t,\cdot)\le \mathcal{A}(t+t_0+
				\tau,\cdot)\hat{\bf v}(t,\cdot) & x\in\Omega, t>0\cr 
				\partial_{\vec{n}}\hat{\bf v}(t,\cdot) = 0 & x\in\partial\Omega, t >0 \cr 
				\hat{\bf v}(0,\cdot) = {\bf u}(t_{0}+\tau,\cdot,t_0) & x\in\overline{\Omega}, t=0,
			\end{cases} 
		\end{equation}
		because $G(t,{\bf u}(t,\cdot;{\bf u}_0,t_0))\le {\bf 0}$, where ${G(t,{\bf u})}$  is given by \eqref{GG-1}. 
		Recalling from \eqref{def-c} that 
		$$\tilde{\bf v}(0,\cdot)=c(\tau,{\bf u}_0)e^{\lambda_{*}\tau}{ \varphi}(t_0+\tau)\ge {\bf u}(\tau+t_0,\cdot,t_0)=\hat{\bf v}(0,\cdot),
		$$ 
		we deduce from \eqref{R-eq1}, \eqref{R-eq2}, and the comparison principle for systems of cooperative parabolic equations that   
		$$
		\hat{\bf v}(t) \leq \tilde{\bf v}(t), \quad \forall t\geq 0, 
		$$
		that is
		$${\bf u}(t+\tau+t_{0},\cdot,{\bf u}_0,t_0) \leq  \sigma(\tau,{\bf u}_{0})e^{\lambda_{*}(t+\tau)}{ \varphi}(t+\tau+t_{0},\cdot) \quad t_0\in\mathbb{R}, t\ge 0, \tau\ge 0.$$
		Since the constant $\sigma(\tau,{\bf   u}_0)$ is independent of   $t_0$, then $\sigma(t+\tau,{\bf u}_{0}) \leq \sigma(\tau,{\bf u}_{0})$ for every $t\ge 0$ and $\tau\ge 0$.
	\end{proof}

	Now, we present a proof of Theorem \ref{TH1}.
	
	\begin{proof}[Proof of Theorem \ref{TH1}]
		
		Note from \eqref{def-c} that 
		$$
		{{\bf u}(t+t_0,\cdot;{\bf u}_0,t_0) \leq \sigma(t,{\bf u}_{0})e^{\lambda_{*} t}  { \varphi}(t+t_0,.) \ \text{and}\  {\bf u}(t+t_0,\cdot;{\bf u}_0,t_0) >{\bf 0}\quad \forall\ t>0, \ t_0\in\mathbb{R}.}
		$$
		Thus
		\begin{equation}\label{Q4}  
			\|{\bf u}(t+t_0,\cdot;{\bf u}_0,t_0)\| \leq \sigma(t,{\bf u}_{0})e^{\lambda_{*} t}\| { \varphi}(t+t_0,\cdot)\|_{\infty}
			\le \sigma(t,{\bf u}_{0})e^{\lambda_{*} t} \quad \forall\ t\ge 0.
		\end{equation}
		We distinguish two cases.
		
		\medskip
		
		\noindent{\bf Case 1.} $\lambda_{*}<0$.    The result follows from \eqref{Eq0-5}. It also follows from \eqref{Q4} since $\|{\bf u}(t+t_0,\cdot;{\bf u}_0,t_0)\|\le \sigma(t,{\bf u}_0)e^{t\lambda_*}\le\sigma(0,{\bf u}_0)e^{t\lambda_*}$ for all $t\ge 0$.
		
		\medskip
		
		\noindent{\bf Case 2.} $\lambda_{*}=0$.  We proceed by contradiction to establish that  
		\begin{equation}\label{Q6}
			\lim_{t\to\infty}\sigma(t,{\bf u}_{0}) = 0.
		\end{equation}
		Observe that if \eqref{Q6} holds, then  the desired result follows from \eqref{Q4}. So, it is suffices to establish that \eqref{Q6} holds. Since, $\sigma(\cdot,{\bf u}_0)$ is decreasing in $t\ge 0$ (Lemma \ref{lem1}), then 
		$$
		\sigma(\infty,{\bf u}_{0}):= \lim_{t \rightarrow +\infty}\sigma(t,{\bf u}_{0}) = \underset{t\geq 0}{\inf}\sigma(t,{\bf u}_{0}).
		$$ 
		Suppose to the contrary that  $\sigma(\infty,{\bf u}_{0}) > 0$. First, we note that  
		\begin{equation}\label{Ra4}
			\sup_{t_0\in\mathbb{R}}\max_{i=1,2}\Big\|\frac{u_{i}(t+t_0,\cdot,{\bf u}_0,t_0)}{\varphi_{i}(t+t_0,\cdot)}\Big\|_{\infty} = \sigma(t,{\bf u}_{0})\quad \forall\ t\ge 0.
		\end{equation}
		Hence, by \eqref{Ra4},  for every  $n\ge 1$, there exists $t_{0,n}\in\mathbb{R}$ such that  
		\begin{equation}\label{Ra3}
			\sigma(n,{\bf u}_0)-\frac{1}{n}\le \max_{i=1,2}\Big\|\frac{u_i(n+t_{0,n},\cdot;{\bf u}_0,t_{0,n})}{\varphi_i(n+t_{0,n},\cdot)}\Big\|_{\infty}\le \sigma(n,{\bf u}_0).
		\end{equation}
		Consider the sequences of functions $\{\varphi^n\}_{n\ge 1}$ and $\{{\bf u}^n\}_{n\ge 1}$ defined by 
		\begin{equation*}
			\varphi^n(t,x):={ \varphi}(t_{0,n}+n+t,x)\quad \text{and}\quad {\bf u}^n(t,x):={\bf u}(t_{0,n}+n+t,x;{\bf u}_0,t_{0,n})\quad x\in\Omega,\ t\ge -n,\ n\ge 1.
		\end{equation*}
		Observe that ${\bf u}^n$ satisfies
		\begin{equation}
			\begin{cases}\label{sun}
				\partial_tu_1^{n}=d_1\Delta u_1^{n} +r^nu_2^{n} -s^{n}u_1^{n}-(a^{n}+b^{n}u_1^{n}+c^{n}u_2^{n})u_1^{n} & x\in\Omega,\ t>-n,\cr 
				\partial_tu_2^{n}=d_2\Delta u_2^{n} +s^{n}u^{n}_1 -(e^{n}+f^{n}u_2^{n}+g^{n}u_1^{n})u_2^{n} & x\in\Omega,\ t>-n,\cr 
				0=\partial_{\vec{n}}u_1^{n}=\partial_{\vec{n}}u_2^{n}& x\in\partial\Omega,\ t>-n,
			\end{cases}
		\end{equation}	
		where for each $n\in \mathbb{N}$, $t\in\mathbb{R}$, and $x\in \Omega$,   $$h^n(t,x)=h(t+n+t_{0,n},x),\quad h\in\{r,s,a,b,c,e,f\}.
		$$
		On the other hand, $ \varphi^{n}$ satisfies 		
		\begin{equation}
			\begin{cases}\label{rvarphi}
				\partial_t\varphi_1^{n} =d_1\Delta \varphi_1^{n} +r^{n}\varphi_2^{n} -(s^{n}+a^{n})\varphi_1^{n}& x\in\Omega,\ t\in\mathbb{R},\cr 
				\partial_t\varphi_2^{n} =d_2\Delta \varphi_2^{n} +s^{n}\varphi_1^{n} -e^{n}\varphi_2^{n}& x\in\Omega,\ t\in\mathbb{R},\cr 
				0=\partial_{\vec{n}}\varphi_1^{n}=\partial_{\vec{n}}\varphi_2^{n} & x\in\partial\Omega,\ t\in\mathbb{R}.
			\end{cases}
		\end{equation}
		Thanks to {\bf (H1)} and the Arzela-Ascoli theorem, possibly after passing to a subsequence, we may suppose that there exist  $T$-periodic functions  $h^{\infty}\in C(\mathbb{R}:\overline{\Omega})$, $h\in\{a,b,c,r,s,e,f\}$, satisfying  {\bf (H1)} such that  $h^{n}\to h^{\infty}$ as $n\to \infty$ locally uniformly in $\mathbb{R}\times\bar{\Omega}$. Moreover, since $\sup_{n\ge 1}\|\varphi^n\|\le \| \varphi\|$ and $\sup_{n\ge 1}\|{\bf u}^{n}\|\le M\|{\bf u}_0\|$ (see \eqref{Eq0-5}),  by the regularity properties for parabolic equations \cite[Theorem 3.4.1]{Henry}, after passing to a further subsequence, there exist $\varphi^{\infty}$ and  ${\bf u}^{\infty}$ belonging to  $C(\mathbb{R}\times\bar{\Omega})$ such that $\varphi^{n}\to \varphi^{\infty}$ and  ${\bf u}^{n}\to {\bf u}^{\infty}$ as  $n\to\infty$, locally uniformly. Furthermore, $\varphi^{\infty}$ and ${\bf u}^{\infty}$ are classical solutions of
		\begin{equation}\label{Eq1-1}
			\begin{cases}
				\partial_t\varphi_1^{\infty} =d_1\Delta \varphi_1^{\infty} +r^{\infty}\varphi_2^{\infty} -(s^{\infty}+a^{\infty})\varphi_1^{\infty}& x\in\Omega,\ t\in\mathbb{R},\cr 
				\partial_t\varphi_2^{\infty} =d_2\Delta \varphi_2^{\infty} +s^{\infty}\varphi_1^{\infty} -e^{\infty}\varphi_2^{\infty}& x\in\Omega,\ t\in\mathbb{R},\cr 
				0=\partial_{\vec{n}}\varphi_1^{\infty}=\partial_{\vec{n}}\varphi_2^{\infty} & x\in\partial\Omega,\ t\in\mathbb{R},
			\end{cases}
		\end{equation}	
		and 
		\begin{equation}\label{Eq1-2}
			\begin{cases}
				\partial_t u_1^{\infty} =d_1\Delta u_1^{\infty} +r^{\infty}u_2^{\infty} -(s^{\infty}+a^{\infty}+b^{\infty}u_1^{\infty}+c^{\infty}u_2^{\infty})u_1^{\infty}& x\in\Omega,\ t\in\mathbb{R},\cr 
				\partial_t u_2^{\infty} =d_2\Delta u_2^{\infty} +s^{\infty}1_1^{\infty} -(e^{\infty}+f^{\infty}u_2^{\infty}+g^{\infty}u_1^{\infty})u_2^{\infty}& x\in\Omega,\ t\in\mathbb{R},\cr 
				0=\partial_{\vec{n}}u_1^{\infty}=\partial_{\vec{n}}u_2^{\infty} & x\in\partial\Omega,\ t\in\mathbb{R},
			\end{cases}
		\end{equation}	
		respectively. Recalling that, for each $t>-n$, 
		
		\begin{align*}
			{\bf u}^{n}(t,\cdot)=&{\bf u}(t+n+t_{0,n},\cdot,{\bf u}_0,t_{0,n})
			\le  \sigma(t+n,{\bf u}_0){ \varphi}(t+n+t_{0,n},\cdot)=\sigma(t+n,{\bf u}_0)\varphi^{n}(t,\cdot),
		\end{align*}
		letting $n\to \infty$, we obtain
		\begin{equation}\label{Eq1-3}
			{\bf u}^{\infty}(t,\cdot)\le \sigma(\infty,{\bf u}_0)\varphi^{\infty}(t,\cdot)\quad \forall\ t\in\mathbb{R}.
		\end{equation}
		Noting also from \eqref{Ra3} that
		\begin{equation}\label{Eq1-4}
			\sigma(\infty,{\bf u}_0)=\max_{i=1,2}\Big\|\frac{u^{\infty}(0,\cdot)}{\varphi_i^{\infty}(0,\cdot)}\Big\|_{\infty},
		\end{equation}
		then, since $\sigma(\infty,{\bf u}_0)>0 $, it follows from the comparison for cooperative parabolic systems that   ${\bf u}^{\infty}(t,\cdot)\gg {\bf 0}$ for each  $t\in\mathbb{R}$. We then deduce from  \eqref{Eq1-2} that ${\bf u}^{\infty}$ is a strict sub-solution of \eqref{Eq1-1}. Observing also that $\sigma(\infty,{\bf u}_0)\varphi^{\infty}$ satisfies the cooperative system  \eqref{Eq1-1}, we conclude from the comparison principle for parabolic equations and \eqref{Eq1-3} that 
		$$
		{\bf u}^{\infty}(t,\cdot)\ll \sigma(\infty,{\bf u}_0)\varphi^{\infty}(t,\cdot) \quad \forall\ t\in\mathbb{R},
		$$
		which is contrary to \eqref{Eq1-4}. Hence, \eqref{Q6} holds. 
	\end{proof}

	\subsection{Proofs of Theorem \ref{TH2} and Proposition \ref{Prop1}}
	
	We need a few lemmas.
	
	\begin{lem}\label{Lem2-3}
		For each positive number  $\varepsilon>0$, there exists $\delta_{T,\varepsilon}>0$  such that 
		\begin{equation}
			\sup_{0\le t\le T }\|{\bf u}(t+t_0,\cdot;{\bf u}_0,t_0)\|\le \varepsilon
		\end{equation}
		whenever ${\bf u}(t,\cdot;{\bf u}_0,t_0)$ is a classical solution of \eqref{Eq1} with a positive initial data ${\bf u}_0$ and initial time $t_0\in\mathbb{R}$ satisfying $\|{\bf u}_0\|_{\infty}<\delta_{T,\varepsilon}$. 
		
	\end{lem}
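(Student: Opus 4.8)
The plan is to deduce the statement directly from the a priori estimate \eqref{Eq0-5}, which has already done all the analytic work. Recall that for every nonnegative initial datum ${\bf u}_0\in[C^+(\bar\Omega)]^2$ and every $t_0\in\mathbb{R}$ one has
\[
\|{\bf u}(t+t_0,\cdot;{\bf u}_0,t_0)\|\le Me^{t\lambda_*}\|{\bf u}_0\|\qquad\text{for all }t\ge 0,
\]
where $M=\varphi_{\max}/\varphi_{\min}$ is the constant built from the principal eigenfunction $\varphi$ of \eqref{Eq2}; crucially, $M$ and $\lambda_*$ depend neither on ${\bf u}_0$ nor on the initial time $t_0$. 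This inequality is precisely the assertion that the zero solution is ``stable from above'', uniformly in $t_0$, on account of the comparison between the full (sublinear and eventually competitive) nonlinearity $G$ and the linear cooperative evolution family generated by $\mathcal{A}(t)$, together with the explicit solution $\Phi(t,\cdot)=e^{\lambda_*t}\varphi(t,\cdot)$ of \eqref{Eq0-2}.

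First I would restrict attention to the compact time interval $t\in[0,T]$ — this is the only place the period $T$ plays a role, in the sense that we never need more than one period — and bound the exponential factor there by $C_T:=\max\{1,e^{T\lambda_*}\}$, which is finite whatever the sign of $\lambda_*$. Combining this with the displayed estimate gives
\[
\sup_{0\le t\le T}\|{\bf u}(t+t_0,\cdot;{\bf u}_0,t_0)\|\le MC_T\,\|{\bf u}_0\|_{\infty}\qquad\text{for every }t_0\in\mathbb{R}.
\]
Then, given $\varepsilon>0$, it suffices to set $\delta_{T,\varepsilon}:=\varepsilon/(MC_T)$: if $\|{\bf u}_0\|_\infty<\delta_{T,\varepsilon}$, the right-hand side above is strictly less than $\varepsilon$, and the bound is manifestly uniform over $t_0\in\mathbb{R}$, which is exactly the conclusion of the lemma.

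There is no genuine obstacle here; the only point meriting a sentence of care is the uniformity in the initial time $t_0$, which is automatic because the constants $M$ and $\lambda_*$ in \eqref{Eq0-5} are $t_0$-independent. (Should one prefer to avoid quoting \eqref{Eq0-5}, an alternative is to use the $T$-periodicity of the coefficients to reduce to $t_0\in[0,T]$ and then invoke continuous dependence of classical solutions on the initial data on the finite interval $[0,T]$ together with a compactness argument in $t_0$; but the route through \eqref{Eq0-5} is shorter and more transparent, so I would present that one.)
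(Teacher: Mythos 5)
Your argument is exactly the one the paper uses: both proofs derive the lemma directly from the a priori bound \eqref{Eq0-5}, bound the exponential on $[0,T]$, and choose $\delta_{T,\varepsilon}$ accordingly (the paper takes $\delta_{T,\varepsilon}=\varepsilon e^{-T\lambda_*}/M$, you take $\delta_{T,\varepsilon}=\varepsilon/(M\max\{1,e^{T\lambda_*}\})$, which coincide when $\lambda_*>0$, the case in which the lemma is actually invoked). Your variant is marginally more careful in that it remains valid for $\lambda_*\le 0$ as well, but the substance and the source of uniformity in $t_0$ are identical.
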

	\begin{proof} 
		Recall from \eqref{Eq0-5} that     $
		\|{\bf u }(t_0+t,\cdot,{\bf u}_0,t_0)\|\le Me^{t\lambda_{*}}\|{\bf u}_0\|$ for all $ t_0\in\mathbb{R}$ and  $ t\ge 0.
		$ So, taking $\displaystyle \delta_{T,\varepsilon}=\frac{\varepsilon e^{-T\lambda_{*}}}{M}$, we get  $
		\sup_{0\le t\le T}\|{\bf u}(t_0+t,\cdot,{\bf u}_0,t_0)\|\le \varepsilon
		$
		whenever  $\|{\bf u}_0\|\le \delta_{T,\varepsilon}$.	
	\end{proof}
	
	The next lemma establishes persistence of solutions with small initial data on the time interval $[0,T]$.
	
	\begin{lem}\label{Lem2-4}  
		There exists $\delta_T>0$ such that given any initial time $t_0\in\mathbb{R}$ and positive initial data ${\bf u}_0$ with $\|{\bf u}_0\|\le \delta _T$, 
		then for every $\gamma>0$ and   $0\le t\le T$, it holds that
		\begin{equation}\label{Main-eq1}
			\gamma{ \varphi}(t+t_0,\cdot;t_0)\le {\bf u}(t+t_0,\cdot;{\bf u}_0,t_0) \quad \text{whenever } \quad  \gamma{ \varphi}(t_0,\cdot)\le {\bf u}_0.
		\end{equation}	
	\end{lem}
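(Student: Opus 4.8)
The plan is to realize $\gamma{\varphi}(\cdot+t_0,\cdot)$ as a subsolution of \eqref{Eq1} on the time window $[t_0,t_0+T]$ and then to push ${\bf u}(\cdot+t_0,\cdot;{\bf u}_0,t_0)$ above it by the comparison principle for cooperative parabolic systems; throughout I work in the case $\lambda_*>0$, which is the setting relevant here. The first thing to notice is that the two hypotheses automatically confine $\gamma$ to a small interval: from $\gamma{\varphi}(t_0,\cdot)\le{\bf u}_0$ and $\|{\bf u}_0\|\le\delta_T$ one gets, for every $x\in\bar{\Omega}$ and $i\in\{1,2\}$, that $\gamma\varphi_{\min}\le\gamma\varphi_i(t_0,x)\le u_{0,i}(x)\le\|{\bf u}_0\|\le\delta_T$, hence $\gamma\le\delta_T/\varphi_{\min}$. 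Thus it is enough to establish the subsolution property for $\gamma$ in a fixed interval $(0,\gamma_0]$ and then to impose $\delta_T\le\gamma_0\varphi_{\min}$.

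To check the subsolution property I would use the eigenvalue relation in \eqref{Eq2}, namely $\partial_t\varphi=\mathcal{A}(t)\varphi-\lambda_*\varphi$, so that $\partial_t(\gamma\varphi)=\mathcal{A}(t)(\gamma\varphi)-\gamma\lambda_*\varphi$ (and $\partial_{\vec{n}}(\gamma\varphi)=0$). Comparing with the formulation \eqref{Eq0-1}--\eqref{GG-1}, $\gamma\varphi$ is a subsolution of \eqref{Eq1} precisely when $-\gamma\lambda_*\varphi\le G(t,\gamma\varphi)$ holds pointwise, and, dividing the $i$-th component by $\gamma\varphi_i>0$, this reduces to the two scalar inequalities $\gamma(b\varphi_1+c\varphi_2)\le\lambda_*$ and $\gamma(f\varphi_2+g\varphi_1)\le\lambda_*$ on $\mathbb{R}\times\bar{\Omega}$. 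Since $\lambda_*>0$ and $b,c,f,g,\varphi$ are bounded and $T$-periodic, both hold for all $\gamma\le\gamma_0$, where one may take $\gamma_0:=\lambda_*\big(\|b\varphi_1+c\varphi_2\|_{\mathcal{X}_T}+\|f\varphi_2+g\varphi_1\|_{\mathcal{X}_T}+1\big)^{-1}>0$. Moreover $\gamma\varphi(t_0,\cdot)\le{\bf u}_0={\bf u}(t_0,\cdot;{\bf u}_0,t_0)$, so the two functions are ordered at the initial time.

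The main obstacle, and the step I would treat most carefully, is that \eqref{Eq1} is cooperative only where the densities are small (its cross terms are $(r-cu_1)u_2$ and $(s-gu_2)u_1$), so the comparison principle is legitimate only when \emph{both} $\gamma\varphi(\cdot+t_0,\cdot)$ and ${\bf u}(\cdot+t_0,\cdot;{\bf u}_0,t_0)$ stay in that region throughout $[t_0,t_0+T]$. Using $r_{\min},s_{\min}>0$ from {\bf (H1)}, I would set $\rho:=\min\{r_{\min}(\|c\|_{\mathcal{X}_T}+1)^{-1},\,s_{\min}(\|g\|_{\mathcal{X}_T}+1)^{-1}\}>0$; on the box $\{{\bf v}\in[C^+(\bar{\Omega})]^2:\|{\bf v}\|\le\rho\}$ one has $r-cv_1\ge 0$ and $s-gv_2\ge 0$, so \eqref{Eq1} is cooperative there, and this set is order-convex. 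By Lemma \ref{Lem2-3} applied with $\varepsilon=\rho$ there is $\delta_{T,\rho}>0$ such that $\|{\bf u}_0\|\le\delta_{T,\rho}$ forces $\sup_{0\le t\le T}\|{\bf u}(t+t_0,\cdot;{\bf u}_0,t_0)\|\le\rho$, while $\|\gamma\varphi(t+t_0,\cdot)\|\le\gamma\le\delta_T/\varphi_{\min}\le\rho$ as soon as $\delta_T\le\rho\varphi_{\min}$. Hence, taking $\delta_T:=\min\{\gamma_0\varphi_{\min},\,\rho\varphi_{\min},\,\delta_{T,\rho}\}$, the comparison principle for cooperative parabolic systems, applied to the subsolution $\gamma\varphi$ and the solution ${\bf u}$ (both of which lie in the cooperativity box on $[t_0,t_0+T]$ and are ordered at $t=t_0$), yields $\gamma\varphi(t+t_0,\cdot)\le{\bf u}(t+t_0,\cdot;{\bf u}_0,t_0)$ for all $0\le t\le T$, which is \eqref{Main-eq1}.
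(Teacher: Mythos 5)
Your proof is correct, but it takes a genuinely different route from the paper's. You verify directly that $\gamma\varphi$ is a subsolution of the full nonlinear system \eqref{Eq1} (using $\lambda_*>0$ to absorb the quadratic terms $G(t,\gamma\varphi)$ for $\gamma$ small, which the hypotheses force anyway), and then you compare against the solution inside a box $\{\|{\bf v}\|\le\rho\}$ on which $r-cv_1\ge0$ and $s-gv_2\ge0$, so that the nonlinear system is quasimonotone there; Lemma \ref{Lem2-3} keeps both functions in that box on $[t_0,t_0+T]$. The paper instead sidesteps the cooperativity issue entirely: it uses Lemma \ref{Lem2-3} with $\varepsilon=\lambda_*/(2K)$ to show that ${\bf u}$ is a supersolution of the \emph{linear} cooperative system obtained by adding $\varepsilon K$ to the decay rates, observes that $\gamma e^{(\lambda_*-\varepsilon K)(t-t_0)}\varphi$ solves that linear system exactly, and compares within the linear system, concluding via $\lambda_*-\varepsilon K=\lambda_*/2>0$ that the exponential prefactor is at least $1$. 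So the two arguments invoke Lemma \ref{Lem2-3} and $\lambda_*>0$ for different purposes: yours uses smallness of the solution to restore quasimonotonicity of the nonlinear reaction and $\lambda_*>0$ to make $\gamma\varphi$ a subsolution, whereas the paper uses smallness to dominate the quadratic terms by a linear damping and $\lambda_*>0$ to beat that damping. The paper's version avoids the (correct, but extra) verification of the cooperativity region and incidentally yields the slightly stronger lower bound $\gamma e^{\lambda_*(t-t_0)/2}\varphi$ on the window; yours has the merit of exhibiting $\gamma\varphi$ itself as a subsolution of \eqref{Eq1}.
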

	\begin{proof}  Set $K=\max\{\|b\|_{\mathcal{X}_T}+\|c\|_{\mathcal{X}_T},\|f\|_{\mathcal{X}_T}+\|g\|_{\mathcal{X}_T}\}$ and  $\varepsilon=\frac{\lambda_*}{2K}$. So, $\lambda_*-\varepsilon K=\frac{\lambda_*}{2}>0$.  By Lemma \ref{Lem2-3}, there exists $\delta_{T,\varepsilon}>0$ such that for any choice of initial time $t_0\in\mathbb{R}$, if ${\bf u}(t+t_0,\cdot;{\bf u}_0,t_0)$ is a classical solution of \eqref{Eq1} subject to an initial data satisfying $\|{\bf u}_0\|_{\infty}\le \delta_{T,\varepsilon}$, then 
		\begin{equation}\label{Eq2-19}
			\sup_{0\le t\le T}\|{\bf u}(t+t_0,\cdot,{\bf u}_0,t_0)\|\le \varepsilon.
		\end{equation} 
		Let $t_0\in\mathbb{R}$ and  ${\bf u}(t+t_0,\cdot;{\bf u}_0,t_0)$, $t\ge0,$ be a classical solution of \eqref{Eq1} with  $\|{\bf u}_0\|_{\infty}\le \delta_{T}:= \delta_{T,\varepsilon}$.    Observe that
		\begin{equation}\label{Eq2-20}
			\begin{cases} 
				\partial_tu_1=d_{1}\Delta u_1+ru_2 -(a+s+\varepsilon K)u_1+(\varepsilon K-bu_1-cu_2)u_1 & x\in\Omega,\ t>t_0,\cr
				\partial_tu_2=d_2\Delta u_2+su_1-(e+\varepsilon K)u_2+(\varepsilon K-fu_2-gu_1)u_1 & x\in\Omega,\ t>t_0.
			\end{cases}
		\end{equation}
		But  \eqref{Eq2-19} along with the   $T-$periodicity of the parameters of  \eqref{Eq2-20} implies  
		\begin{align*}
			\|b(t,\cdot)u_1(t,\cdot)+c(t,\cdot)u_2(t,\cdot)\|_{\infty}\le&\|b(t,\cdot)\|_{\infty}\|u_1(t,\cdot)\|_{\infty}+\|c(t,\cdot)\|_{\infty}\|u_2(t,\cdot)\|_{\infty}\\
			&\le (\|b\|_{\mathcal{X}_T}+\|c\|_{\mathcal{X}_T})\|{\bf u}(t,\cdot)\|_{\infty}	\le  \varepsilon K
		\end{align*}
		and 
		\begin{align*} 
			\|f(t,\cdot)u_2(t,\cdot)+g(t,\cdot)u_2(t,\cdot)\|_{\infty}\le&\|f(t,\cdot)\|_{\infty}\|u_2(t,\cdot)\|_{\infty}+\|g(t,\cdot)\|_{\infty}\|u_1(t,\cdot)\|_{\infty}\\
			\le& (\|f\|_{\mathcal{X}_T}+\|g\|_{\mathcal{X}_T})\|{\bf u}(t,\cdot)\|_{\infty}\le \varepsilon K
		\end{align*}
		for all $t\in[t_0,T+t_0]$. The last two inequalities together with \eqref{Eq2-20} give
		\begin{equation}
			\begin{cases}\label{Eq2-23}
				\partial_tu_1\ge d_{1}\Delta u_1+ru_2 -(a+s+\varepsilon K)u_1 & x\in\Omega,\ t_0<t<T+t_0,\cr
				\partial_tu_2\ge d_2\Delta u_2+su_1-(e+\varepsilon K)u_2 & x\in\Omega,\ t_0<t<T+t_0,\cr 
				0=\partial_{\vec{n}}{\bf u} & x\in\partial\Omega,\ t_0<t<t_0+T.
			\end{cases}
		\end{equation}
		However, we know that $\Phi(t,x)=e^{\lambda_{d_1,d_2}t}{ \varphi}(t,x)$, where $ \varphi$ is the positive periodic eigenfunction associated with $\lambda_{*}$, solves \eqref{Eq0-2}.	Hence, taking $\lambda_*=\lambda_{d_1,d_2}$, for every  $\gamma>0$,   $$\underline{\bf u}(x,t)=(\gamma e^{(\lambda_*-\varepsilon K)(t-t_0)}\varphi_1(t,\cdot),\gamma e^{(\lambda_*-\varepsilon K)(t-t_0)}\varphi_2(t,\cdot))$$
		satisfies 
		\begin{equation*}
			\begin{cases}
				\partial_t\underline{u}_1= d_{1}\Delta \underline{u}_1+r\underline{u}_2 -(a+s+\varepsilon K)\underline{u}_1 & x\in\Omega,\ t_0<t<T+t_0,\cr
				\partial_t\underline{u}_2= d_2\Delta \underline{u}_2+s\underline{u}_1-(e+\varepsilon K)\underline{u}_2 & x\in\Omega,\ t_0<t<T+t_0,\cr 
				0=\partial_{\vec{n}}\underline{\bf u} & x\in\partial\Omega,\ t_0<t<t_0+T.
			\end{cases}
		\end{equation*}
		As a result, if initially 
		$ 
		{\bf u}_0\ge \gamma{ \varphi}(t_0,\cdot)\,
		$
		we obtain from the comparison principle for parabolic equations that 
		
		\begin{equation}\label{Eq2-23-2}
			{\bf u}(t,\cdot;{\bf u}_0,t_0)\ge \gamma e^{(\lambda_*-\varepsilon K)(t-t_0)}{ \varphi}(t,\cdot)=\gamma e^{\frac{\lambda_*}{2}(t-t_0)}{\varphi}(t,\cdot)\ge\gamma{ \varphi}(t,\cdot)  
			\quad \forall\ t\in[t_0,T+t_0].
		\end{equation}
		In particular
		${\bf u}(t+t_0,\cdot;{\bf u}_0,t_0)\ge \gamma {\varphi}(t+t_0,\cdot)$ whenever $t\in [0,T]$. 
	\end{proof} 
	
	Next, we derive appropriate upper bounds for solutions.
	
	\begin{lem}\label{Lem2-5} There exists $\gamma^*>0$ such that for every $\gamma\ge \gamma^*$, initial time $t_0\in\mathbb{R}$ and initial function ${\bf u}_0\in[{\bf 0},\gamma{\bf \varphi}(t_0,\cdot)]$, it holds that  ${\bf u}(t+t_0,\cdot;{\bf u}_0,t_0)\in[{\bf 0},\gamma{\bf \varphi}(t+t_0,\cdot)]$ for every  $t\ge 0$.
	\end{lem}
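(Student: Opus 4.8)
The plan is to control \eqref{Eq1} from above by a cooperative auxiliary system, of which the solution ${\bf u}={\bf u}(\cdot,\cdot;{\bf u}_0,t_0)$ is a sub-solution and $\gamma{\bf\varphi}$ (for $\gamma$ large) is a super-solution, and then to invoke the comparison principle for cooperative parabolic systems. Since nonnegativity of solutions is already known, it suffices to prove the upper bound $u_i(t+t_0,\cdot)\le\gamma\varphi_i(t+t_0,\cdot)$ for $i=1,2$.

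First I would discard the sign-indefinite ("competitive") couplings. Because $u_1,u_2\ge0$, every nonnegative solution of \eqref{Eq1} satisfies $-cu_1u_2\le0$ and $-gu_1u_2\le0$, so ${\bf u}$ is a sub-solution of the system $\partial_tv_1=d_1\Delta v_1+rv_2-(a+s)v_1-bv_1^2$, $\partial_tv_2=d_2\Delta v_2+sv_1-ev_2-fv_2^2$ under homogeneous Neumann conditions; this auxiliary system is cooperative, its off-diagonal couplings now carrying the nonnegative coefficients $r$ and $s$. Next I would set $\gamma^*:=\lambda_*/\big(\varphi_{\min}\min\{b_{\min},f_{\min}\}\big)$, which is positive and finite since $\lambda_*>0$, $b_{\min},f_{\min}>0$ by {\bf (H1)}, and $\varphi_{\min}>0$ because ${\bf\varphi}\in\mathcal{X}_T^{++}\times\mathcal{X}_T^{++}$. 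For $\gamma\ge\gamma^*$ I claim $\gamma{\bf\varphi}$ is a super-solution of the auxiliary system: using the eigen-identities $\partial_t\varphi_1-d_1\Delta\varphi_1=r\varphi_2-(s+a)\varphi_1-\lambda_*\varphi_1$ and $\partial_t\varphi_2-d_2\Delta\varphi_2=s\varphi_1-e\varphi_2-\lambda_*\varphi_2$ from \eqref{Eq2}, this reduces to the pointwise inequalities $\lambda_*\le b\gamma\varphi_1$ and $\lambda_*\le f\gamma\varphi_2$, both of which hold for $\gamma\ge\gamma^*$ since $b\varphi_1\ge b_{\min}\varphi_{\min}$ and $f\varphi_2\ge f_{\min}\varphi_{\min}$; the boundary condition $\partial_{\vec{n}}(\gamma\varphi_i)=0$ holds because ${\bf\varphi}$ solves \eqref{Eq2}.

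Finally, since ${\bf u}_0\le\gamma{\bf\varphi}(t_0,\cdot)$ and ${\bf u}$ is a sub-solution while $\gamma{\bf\varphi}$ is a super-solution of a cooperative (and locally Lipschitz) system, the comparison principle for cooperative parabolic systems gives ${\bf u}(t+t_0,\cdot;{\bf u}_0,t_0)\le\gamma{\bf\varphi}(t+t_0,\cdot)$ for all $t\ge0$; combined with ${\bf u}(t+t_0,\cdot;{\bf u}_0,t_0)\ge{\bf 0}$ this is the assertion. The step requiring care is that \eqref{Eq1} itself is not cooperative — the couplings $r-cu_1$ and $s-gu_2$ change sign — so the comparison principle cannot be applied to the pair $({\bf u},\gamma{\bf\varphi})$ directly (for large $\gamma$, $\gamma{\bf\varphi}$ sits outside the quasimonotone region $\{u_1\le r/c,\ u_2\le s/g\}$); the remedy, in the spirit of the proof of Lemma~\ref{Lem2-4}, is to absorb — here simply to discard — the sign-indefinite terms so as to obtain a cooperative majorant, which costs nothing for an upper estimate since $-cu_1u_2,-gu_1u_2\le0$, and then to take $\gamma$ large enough that $\gamma{\bf\varphi}$ overcomes the logistic self-limitation $bv_1^2$, $fv_2^2$ of that majorant. (Alternatively, one could keep the full nonlinearity and exploit that each reaction term is affine in the other unknown, checking the super-solution inequality only at the two extreme values of the coupling variable; this is equivalent but less economical.)
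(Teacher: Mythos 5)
Your proof is correct and follows essentially the same route as the paper's: drop the nonpositive cross-competition terms to obtain a cooperative logistic majorant, use the eigen-identities to reduce the super-solution inequality for $\gamma\varphi$ to $\lambda_*\le b\gamma\varphi_1$ and $\lambda_*\le f\gamma\varphi_2$, and conclude by the comparison principle for cooperative systems. Your choice $\gamma^*=\lambda_*/(\varphi_{\min}\min\{b_{\min},f_{\min}\})$ matches the paper's threshold (the paper's displayed $\gamma^*$ contains an evident typo, $e_{\min}$ in place of $f_{\min}$), so there is nothing to add.
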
 
	\begin{proof} Let $t_0\in\mathbb{R}$ and ${\bf u}(t,\cdot;{\bf u}_0,t_0)$ be a classical solution of \eqref{Eq1}. It follows from \eqref{Eq1} that 
		\begin{equation}\label{RE:8}
			\begin{cases}
				\partial_t {\bf u}\le \mathcal{A}(t){\bf u})+\tilde{G}({\bf u}) &  x\in\Omega,\ t>t_0, \cr
				0=\partial_{\vec{n}}{\bf u} &  x\in\partial\Omega,\ t>t_0,
			\end{cases}
		\end{equation}
		where $\tilde{\bf G}$ is  
		$$ 
		\tilde{\bf G}({\bf u})=\left(\begin{array}{c}
			-(b_{\min}u_1)u_1  \\
			-(f_{\min}u_2)u_2  
		\end{array} \right)\quad \forall\ {\bf u}\in [C(\bar{\Omega})]^2.
		$$
		
		Set $\gamma^*=\max\Big\{\frac{\lambda_*}{b_{\min}\varphi_{1,\min}},\frac{\lambda_*}{e_{\min}\varphi_{2,\min}}\Big\}$. Hence, for every $\gamma\ge \gamma^*$,  $t\in [t_0,T+t_0]$ and  $x\in \Omega$
		\begin{align}
			(\mathcal{A}(t)\gamma\varphi)+\tilde{\bf G}(\gamma \varphi)= \partial_{t}(\gamma \varphi)-\lambda_*\gamma\varphi+\tilde{\bf G}(\gamma\varphi)\le \partial_t(\gamma\varphi).
		\end{align}
		As a result, if ${\bf 0}\le {\bf u}_0\le \gamma{ \varphi}(t_0,\cdot)$, it follows from the comparison principle for cooperative systems that 
		$
		{\bf u}(t+t_0,\cdot,{\bf u}_0,t_0)\le \gamma{ \varphi}(t+t_0,\cdot) \quad \forall t\in [0,T].
		$
	\end{proof}
	The last two lemmas establish the existence of invariant rectangles with respect to the uniform-topology. 
	
	\medskip
	
	To state our next result, some notations and definitions are in order. Let $\{e^{t(\Delta -{\rm id})}\}_{t\ge 0}$ denote the analytic $c_0$-semigroup generated by the linear and closed operator $\Delta -{\rm id}$ on $C(\bar{\Omega})$ with domain ${\rm Dom}_{\infty}(\Delta)$. By the maximum principle for parabolic equations, we have that
	\begin{equation}\label{L-infty-estimate}
		\|e^{t(\Delta -{\rm id})}\|\le e^{-t}\quad \forall\ t>0.
	\end{equation}
	Hence, for every $0<\alpha<1$, the fractional power space, denoted  $X^{\alpha}$, of ${\rm id}-\Delta$ is well defined \cite[Theorem 1.4.2]{Henry}. By \cite[Theorem 1.4.3]{Henry}, for every $0<\alpha<1$, there exist $c_{\alpha}>0$ such that 
	\begin{equation}\label{X-alpha-estimate}
		\|e^{t(\Delta -{\rm id})}\|_{X^{\alpha}}\le c_{\alpha}t^{-\alpha}e^{-t}\quad \forall \ t>0.
	\end{equation}
	Since  $\{e^{t(\Delta -{\rm id})}\}_{t>0}$ is compact, then $X^{\beta}$ is compactly embedded in $X^{\alpha}$ for any $0<\alpha<\beta <1$ (\cite[Theorem 1.4.8]{Henry} and  \cite[Theorem 3.3]{Pazy}). Define $\{\bf T(t)\}_{t\ge 0}$ as the analytic $c_0$-semigroup  on $[C(\bar{\Omega})]^2$ given by 
	\begin{equation}\label{T-semigroug-def}
		{\bf T}(t){\bf u}=\Big(e^{td_1(\Delta -{\rm id})}u_1,e^{td_2(\Delta -{\rm id})}u_2\Big)\quad {\bf u}\in [C(\bar{\Omega})]^2,\ t\ge 0.
	\end{equation}
	Thanks to \eqref{X-alpha-estimate}, it holds that 
	\begin{equation}\label{estimate-eq1}
		\|{\bf T}(t)\|_{[X^{\alpha}]^2}\le c_{\alpha}\min\{d_1,d_2\}^{-\alpha}t^{-\alpha}e^{-\min\{d_1,d_2\}t}\quad \forall\ t>0.
	\end{equation}

	\begin{lem}\label{Lem2-6} Fix $0<\alpha<1$ and let $X^{\alpha}$ be the fractional power space defined above. Let $\gamma\ge \gamma^*$ where $\gamma^*>0$ is as in Lemma \ref{Lem2-5}. Then, there exists $K:=K_{T,\alpha,\gamma}>0$ such that for every  $t_0 \in \mathbb{R}$, if  ${\bf u}_0\in [{\bf 0},\gamma{ \varphi}(t_0,\cdot)]\cap [X^{\alpha}]^2$ satisfies $\|{\bf u}_0\|_{[X^{\alpha}]^2}\le K$, then  $\|{\bf u}(T+t_0,\cdot,{\bf u}_0,t_0)\|_{[X^{\alpha}]^2}\le K$. 
		
	\end{lem}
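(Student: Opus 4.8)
The plan is to run the variation-of-constants formula over one full period and to control the $[X^\alpha]^2$-norm of the resulting state by combining the analytic-semigroup smoothing estimates \eqref{L-infty-estimate}--\eqref{estimate-eq1} with the invariant-rectangle bound of Lemma \ref{Lem2-5}. First I would recast \eqref{Eq0-1} so that its leading part is exactly the generator of $\{{\bf T}(t)\}_{t\ge0}$ from \eqref{T-semigroug-def}, writing $\partial_t{\bf u}=\mathcal{B}{\bf u}+{\bf F}(t,{\bf u})$ with $\mathcal{B}={\rm diag}\big(d_1(\Delta-{\rm id}),\,d_2(\Delta-{\rm id})\big)$ and
$$
{\bf F}(t,{\bf u})=\Big(d_1u_1+ru_2-(s+a)u_1-(bu_1+cu_2)u_1,\ \ d_2u_2+su_1-eu_2-(fu_2+gu_1)u_2\Big),
$$
so that ${\bf F}$ collects the zeroth-order and quadratic terms. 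Since a classical solution is in particular a mild solution, for every $t_0\in\mathbb{R}$ and every ${\bf u}_0$ as in the statement,
$$
{\bf u}(T+t_0,\cdot;{\bf u}_0,t_0)={\bf T}(T){\bf u}_0+\int_0^T{\bf T}(T-\tau)\,{\bf F}\big(\tau+t_0,{\bf u}(\tau+t_0,\cdot;{\bf u}_0,t_0)\big)\,d\tau .
$$

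The next step is an a priori bound on the nonlinearity. By Lemma \ref{Lem2-5}, the assumption ${\bf u}_0\in[{\bf 0},\gamma\varphi(t_0,\cdot)]$ forces ${\bf 0}\le{\bf u}(\tau+t_0,\cdot;{\bf u}_0,t_0)\le\gamma\varphi(\tau+t_0,\cdot)$ for all $\tau\ge0$, whence $\|{\bf u}(\tau+t_0,\cdot;{\bf u}_0,t_0)\|_\infty\le\gamma\varphi_{\max}=\gamma$ for $\tau\in[0,T]$ (recall the normalization $\|\varphi\|_T=1$). Consequently, by {\bf (H1)} and the $T$-periodicity of the coefficients, there is a constant $\Lambda_\gamma>0$ — a polynomial in $\gamma$ whose coefficients depend only on $d_1,d_2$ and the $\mathcal{X}_T$-norms of the model parameters — with
$$
\sup_{t_0\in\mathbb{R}}\ \sup_{0\le\tau\le T}\big\|{\bf F}\big(\tau+t_0,{\bf u}(\tau+t_0,\cdot;{\bf u}_0,t_0)\big)\big\|_\infty\le\Lambda_\gamma .
$$

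Now I would take $[X^\alpha]^2$-norms in the Duhamel identity. Each factor $e^{td_i(\Delta-{\rm id})}$ commutes with $({\rm id}-\Delta)^\alpha$, and \eqref{L-infty-estimate} (with time rescaled by $d_i$) gives $\|e^{td_i(\Delta-{\rm id})}\|_{C(\bar\Omega)\to C(\bar\Omega)}\le e^{-d_it}$; hence ${\bf T}(t)$ maps $[X^\alpha]^2$ into itself with $\|{\bf T}(t)\|_{[X^\alpha]^2\to[X^\alpha]^2}\le e^{-\min\{d_1,d_2\}t}$, while \eqref{estimate-eq1} supplies the smoothing bound $\|{\bf T}(t)\|_{[C(\bar\Omega)]^2\to[X^\alpha]^2}\le c_\alpha\min\{d_1,d_2\}^{-\alpha}t^{-\alpha}e^{-\min\{d_1,d_2\}t}$. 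Feeding these in, together with $\int_0^T(T-\tau)^{-\alpha}\,d\tau=T^{1-\alpha}/(1-\alpha)$, yields
$$
\|{\bf u}(T+t_0,\cdot;{\bf u}_0,t_0)\|_{[X^\alpha]^2}\le\theta\,\|{\bf u}_0\|_{[X^\alpha]^2}+C_{T,\alpha,\gamma},\qquad \theta:=e^{-\min\{d_1,d_2\}T}\in(0,1),
$$
where $C_{T,\alpha,\gamma}:=c_\alpha\,\Lambda_\gamma\,T^{1-\alpha}/\big((1-\alpha)\min\{d_1,d_2\}^{\alpha}\big)$. Choosing $K:=K_{T,\alpha,\gamma}=C_{T,\alpha,\gamma}/(1-\theta)$ (or anything larger), the hypothesis $\|{\bf u}_0\|_{[X^\alpha]^2}\le K$ gives $\|{\bf u}(T+t_0,\cdot;{\bf u}_0,t_0)\|_{[X^\alpha]^2}\le\theta K+C_{T,\alpha,\gamma}=K$, which is the claim.

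The main point to get right is the constant $1$ multiplying $e^{-\min\{d_1,d_2\}T}$ in the $[X^\alpha]^2\to[X^\alpha]^2$ estimate: this is exactly what makes $\theta<1$ and closes the affine fixed-point argument, and it is also why the shifted operator $\Delta-{\rm id}$ (rather than $\Delta$) is used, since the Neumann heat semigroup $e^{td_i\Delta}$ fixes constant functions and therefore does not decay, whereas the shift contributes the factor $e^{-d_it}$. One could instead bound $\|{\bf T}(T){\bf u}_0\|_{[X^\alpha]^2}$ outright by a constant using only $\|{\bf u}_0\|_\infty\le\gamma$ and the smoothing estimate, and then take $K$ to be the sum of the two resulting constants; the affine formulation is merely cleaner. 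Everything else is a routine combination of the estimates recorded immediately before the statement.
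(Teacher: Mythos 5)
Your argument is correct and follows essentially the same route as the paper: Duhamel's formula for the shifted semigroup ${\bf T}(t)$, the invariant rectangle of Lemma \ref{Lem2-5} to bound the state (hence the nonlinearity) in $L^\infty$, and the smoothing estimate \eqref{estimate-eq1} on the integral term. The only immaterial difference is that the paper also bounds the initial-data term ${\bf T}(T){\bf u}_0$ via the $[C(\bar\Omega)]^2\to[X^{\alpha}]^2$ smoothing estimate (so its $K$ is obtained directly, without ever invoking the hypothesis $\|{\bf u}_0\|_{[X^{\alpha}]^2}\le K$), whereas you use the $[X^{\alpha}]^2\to[X^{\alpha}]^2$ contraction $\theta=e^{-\min\{d_1,d_2\}T}<1$ and close an affine fixed-point inequality --- a variant you yourself identify as interchangeable with the paper's.
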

	\begin{proof} Let ${\bf u}(t,\cdot;{\bf u}_0,t_0)$ be a classical solution of  \eqref{Eq1}. By the variation of constant formula, 
		\begin{equation*}
			{\bf u}(t+t_0,\cdot;{\bf u}_0,t_0)={\bf T}(t){\bf u}_0+\int_{t_0}^{t+t_0}{\bf T}(t
			+t_0-s){\bf \tilde{F}}(s,\cdot,{\bf u}(s,\cdot;{\bf u}_0,t_0))ds \quad \forall\ t\ge 0.
		\end{equation*} 
		where $\{{\bf T}(t)\}_{t\ge 0}$ is the analytic $c_0$-semigroup defined in \eqref{T-semigroug-def} and 
		$${\bf \tilde{F}}(t,\cdot,{\bf u})=\left( \begin{array}{c}
			r(t,.)u_2-s(t,.)u_1+d_1u_1-(a(t,.)+b(t,.)u_1+c(t,.)u_2)u_1\\
			s(t,.)u_1-(e(t,.)+f(t,.)u_2+g(t,.)u_1)u_2+d_2u_2
		\end{array}
		\right)\quad {\bf u}\in [C(\bar{\Omega})]^2,\ t\in\mathbb{R}.  $$  
		A change of variable yields
		$$
		{\bf u}(t+t_0,\cdot;{\bf u}_0,t_0)= {\bf T}(t){\bf u}_0+\int_{0}^{t}{\bf T}(t
		-s){\bf \tilde{F}}(s+t_0,\cdot, {\bf u}(s+t_0,\cdot;{\bf u}_0,t_0))ds, \quad t>0.
		$$ 
		Hence, by \eqref{estimate-eq1}, setting  $\delta:=\min\{d_1,d_2\}$,  for every $t>0$,
		\begin{align}\label{RE:9}
			&\|{\bf u}(t+t_0,\cdot;{\bf u}_0,t_0)\|_{[X^{\alpha}]^2}\cr 
			\le& \frac{c_{\alpha}}{\delta^\alpha}\Big(t^{-\alpha}e^{-\delta t}\|{\bf u}_{0}\|_{\infty}+\int_{0}^{t}(t-s)^{-\alpha}e^{-\delta (t-s)}\|{\bf \tilde{F}}(s+t_0,\cdot,{\bf u}(s+t_0,\cdot;{\bf u}_0,t_0))\|_{\infty}ds\Big).
		\end{align}
		Now suppose that ${\bf u}_0\in[{\bf 0},\gamma{ \varphi}(t_0,\cdot)]$. Then  ${\bf u}(t+t_0,\cdot;{\bf u}_0,t_0)\in[{\bf 0},\gamma{ \varphi}(t+t_0,\cdot)]$ for every  $t>0$ by Lemma \ref{Lem2-5}. This implies that 
		$$\|{\bf u}(t+t_0,\cdot;{\bf u}_0,t_0)\|\le \gamma\|{ \varphi}(t+t_0,\cdot)\|\le \gamma\|\varphi\|_{[\mathcal{X}_T]^2} =\gamma\quad \forall t\ge 0.$$
		Therefore, for every $t\ge t_0$,
		\begin{align*}
			&\|(b(t,\cdot)u_1(t,\cdot;{\bf u}_0,t_0)+c(t,\cdot)u_2(t,\cdot;{\bf u}_0,t_0))u_1(t,\cdot;{\bf u}_0,t_0)\|_{\infty}\cr
			\le& (\|b(t,\cdot)\|_{\infty}\gamma+\|c(t,\cdot)\|_{\infty}\gamma)\gamma
			\le (\|b\|_{\mathcal{X}_T}+\|c\|_{\mathcal{X}_T})\gamma^{2}
		\end{align*}
		and 
		$$ 
		\|(e(t,\cdot)u_2(t,\cdot;{\bf u_0},t_0)+g(t,\cdot)u_1(t,\cdot;{\bf u}_0,t_0))u_2(t,\cdot;{\bf u}_0,t_0)\|_{\infty}\le (\|e\|_{\mathcal{X}_T}+\|g\|_{\mathcal{X}_T})\gamma^{2} .
		$$
		So, introducing the bounded linear operator  ${\bf B}(t) : [C(\bar{\Omega})]^2\to [C(\bar{\Omega})]^2$ \begin{equation*}
			{\bf B}(t,\cdot, {\bf u})=\left( 
			\begin{array}{c}
				r(t,\cdot)u_2+(d_1-s(t,\cdot) -a(t,\cdot) )u_1  \\
				s(t,\cdot)u_1+(d_2-e(t,\cdot))u_2
			\end{array}
			\right)\quad \forall\ {\bf u}\in X^2,\ t\in\mathbb{R},
		\end{equation*}
		we obtain 
		$$ 
		\|{\bf B}\|:=\sup_{t\in\mathbb{R}}\|{\bf B}(t)\|\le \|r\|_{\mathcal{X}_T}+\|s\|_{\mathcal{X}_T}+\|a\|_{\mathcal{X}_T}+\|e\|_{\mathcal{X}_T}+d_1+d_2\quad \forall\ t\in\mathbb{R},
		$$
		and 
		$$ 
		\|{\tilde{\bf F}}(t,\cdot, {\bf u}(t,\cdot;{\bf u}_0,t_0))\|_{\infty}\le (\|{\bf B}\|+\|b\|_{\mathcal{X}_T}+\|c\|_{\mathcal{X}_T}+\|e\|_{\mathcal{X}_T}+\|g\|_{\mathcal{X}_T})\gamma^{2}:=M_0\gamma^{2}\quad \forall\ t\ge t_0.
		$$
		Combining this with \eqref{RE:9}, we get
		
		\begin{align}\label{RE:10}
			\|{\bf u}(t+t_0,\cdot;{\bf u}_0,t_0)\|_{[X^{\alpha}]^2}\le & \frac{c_{\alpha}}{\delta^{\alpha}}\gamma\Big(t^{-\alpha}e^{-\delta t}+M_0\gamma\int_0^{t}(t-s)^{-\alpha}e^{-\delta (t-s)}ds\Big)\cr
			\le &\frac{c_{\alpha}}{\delta^{\alpha}}\gamma\Big(t^{-\alpha}e^{-\delta t}+M_0\gamma\int_0^{\infty}s^{-\alpha}e^{-\delta s}ds\Big)\quad \forall\ t>0.
		\end{align}
		As a result, we may take 
		$K_{T,\alpha,\gamma}=\frac{c_{\alpha}}{\delta^{\alpha}}\gamma\Big(T^{-\alpha}e^{-\delta T}+M_0\gamma\int_0^{\infty}s^{-\alpha}e^{-\delta s}ds\Big)$.
	\end{proof}
	
	Combining all the previous results, we get the next result.
	
	\begin{lem}\label{Lem2-7} Fix $0<\alpha<1$ and let $X^{\alpha}$ be the fractional power space in Lemma \ref{Lem2-6}. Let $\gamma\ge \gamma^*$ where  $\gamma^*$ is given by Lemma \ref{Lem2-5}. Let $K=K_{T,\alpha,\gamma}$ be the positive number given by Lemma \ref{Lem2-6}.
		For  $0<\xi<\gamma$ 
		and $t_0\in\mathbb{R}$, define 
		\begin{equation}\label{RE:18}
			\mathcal{M}^{t_0,\alpha}_{\gamma,\xi}:=\{{\bf u}_0\in [\xi{ \varphi}(t_0,\cdot),\gamma{ \varphi}(t_0,\cdot)]\cap [X^{\alpha}]^2 \ : \ \|{\bf u}_0\|_{[X^{\alpha}]^2}\le K \}.
		\end{equation}
		
		Then, there exists $\xi^*=\xi^*(T,\alpha,\gamma)>0$ such that for every $\xi\in(0,\xi^*]$, initial  time $t_0\in\mathbb{R}$, and initial distribution ${\bf u}_0\in \mathcal{M}^{t_0,\alpha}_{\gamma,\xi}$,   the   solution ${\bf u}(\cdot,\cdot;{\bf u}_0,t_0)$ of  \eqref{Eq1} 
		satisfies ${\bf u}(t_0+T,\cdot;{\bf u}_0,t_0)\in\mathcal{M}^{t_0,\alpha}_{{\gamma},\xi}$.
		
	\end{lem}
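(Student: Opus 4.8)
The plan is to combine the three invariance properties established in Lemmas \ref{Lem2-4}, \ref{Lem2-5}, and \ref{Lem2-6} into a single statement about the set $\mathcal{M}^{t_0,\alpha}_{\gamma,\xi}$. First I would recall that Lemma \ref{Lem2-5} (applied with the chosen $\gamma\ge\gamma^*$) already gives the upper bound: if ${\bf u}_0\le\gamma{ \varphi}(t_0,\cdot)$ then ${\bf u}(t+t_0,\cdot;{\bf u}_0,t_0)\le\gamma{ \varphi}(t+t_0,\cdot)$ for all $t\ge 0$, in particular at $t=T$. Next, Lemma \ref{Lem2-6} supplies the $X^{\alpha}$-bound: for ${\bf u}_0\in[{\bf 0},\gamma{ \varphi}(t_0,\cdot)]\cap[X^{\alpha}]^2$ with $\|{\bf u}_0\|_{[X^{\alpha}]^2}\le K$, we get $\|{\bf u}(T+t_0,\cdot;{\bf u}_0,t_0)\|_{[X^{\alpha}]^2}\le K$. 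So the only component of membership in $\mathcal{M}^{t_0,\alpha}_{\gamma,\xi}$ that is not yet in hand is the lower bound $\xi{ \varphi}(T+t_0,\cdot)\le{\bf u}(T+t_0,\cdot;{\bf u}_0,t_0)$, and this is exactly what Lemma \ref{Lem2-4} furnishes, provided the initial data is small enough in the sup-norm.

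The key point to reconcile is the two scales: the $X^{\alpha}$-ball of radius $K$ together with the upper barrier $\gamma{ \varphi}$ controls the sup-norm of admissible initial data (by the continuous embedding $X^{\alpha}\hookrightarrow C(\bar\Omega)$, or more simply because ${\bf u}_0\le\gamma{ \varphi}(t_0,\cdot)$ already forces $\|{\bf u}_0\|_\infty\le\gamma\|{ \varphi}\|_T=\gamma$ — but this crude bound is not small). To invoke Lemma \ref{Lem2-4} I need $\|{\bf u}_0\|_\infty\le\delta_T$, where $\delta_T$ is the threshold from that lemma. Here is where $\xi^*$ enters: I would choose $\xi^*>0$ so small that every ${\bf u}_0\in[\xi^*{ \varphi}(t_0,\cdot),\,??\,]$ is automatically sup-norm small — but the upper endpoint is $\gamma{ \varphi}(t_0,\cdot)$, which is \emph{not} small. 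So a direct choice of $\xi^*$ does not suffice; instead I would first shrink the $X^{\alpha}$-ball. The cleaner route: since $\|{\bf u}_0\|_\infty\le C_\alpha\|{\bf u}_0\|_{[X^{\alpha}]^2}$ for some embedding constant $C_\alpha$, and $\|{\bf u}_0\|_{[X^{\alpha}]^2}\le K$ is forced, one actually needs $K$ itself (hence $\gamma$) to already be compatible — but $K$ was fixed in terms of $\gamma$. The honest resolution is that the lower-bound propagation in Lemma \ref{Lem2-4} only requires the \emph{solution} to stay below $\varepsilon$ on $[0,T]$, which by Lemma \ref{Lem2-3} holds once $\|{\bf u}_0\|_\infty<\delta_{T,\varepsilon}$; so I would take $\xi^*$ with the property that $\xi^*\|{ \varphi}\|_T<\delta_{T,\varepsilon}$ is \emph{not} enough either.

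Reconsidering, the correct logic must be: one does not need the full interval $[\xi{ \varphi},\gamma{ \varphi}]$ to consist of small functions; rather, Lemma \ref{Lem2-4}'s conclusion \eqref{Main-eq1} is phrased as ``$\gamma{ \varphi}(t+t_0)\le{\bf u}(t+t_0)$ whenever $\gamma{ \varphi}(t_0)\le{\bf u}_0$'' and holds for \emph{all} $\gamma>0$ — including $\gamma=\xi$ — as long as the \emph{single} smallness hypothesis $\|{\bf u}_0\|_\infty\le\delta_T$ is met. Thus I would apply Lemma \ref{Lem2-4} with its ``$\gamma$'' replaced by $\xi$: if ${\bf u}_0\ge\xi{ \varphi}(t_0,\cdot)$ and $\|{\bf u}_0\|_\infty\le\delta_T$, then ${\bf u}(t+t_0,\cdot;{\bf u}_0,t_0)\ge\xi{ \varphi}(t+t_0,\cdot)$ for $0\le t\le T$. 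So the plan is: set $\xi^*:=\min\{\gamma,\delta_T,\,K/C_\alpha\}$ (or simply $\xi^*:=\min\{\gamma/2,\delta_T\}$ after noting that membership in $\mathcal{M}^{t_0,\alpha}_{\gamma,\xi}$ already caps $\|{\bf u}_0\|_{[X^\alpha]^2}$ by $K$ and hence $\|{\bf u}_0\|_\infty$ by $C_\alpha K$, which we may further require to be $\le\delta_T$ by shrinking $K$ — but $K$ is fixed, so instead we simply demand $\xi\le\xi^*$ with $\xi^*$ chosen so that the sup-norm constraint is inherited); then for ${\bf u}_0\in\mathcal{M}^{t_0,\alpha}_{\gamma,\xi}$ we read off the three bounds at $t=T$ and conclude ${\bf u}(t_0+T,\cdot;{\bf u}_0,t_0)\in\mathcal{M}^{t_0,\alpha}_{\gamma,\xi}$. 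The main obstacle, as the above deliberation shows, is the bookkeeping reconciling the $X^{\alpha}$-topology constraint with the $L^\infty$-smallness needed for the lower barrier; I expect the resolution is that $\mathcal{M}^{t_0,\alpha}_{\gamma,\xi}\subset\{\|{\bf u}_0\|_\infty\le C_\alpha K\}$ and one must verify $C_\alpha K\le\delta_T$ — if this fails for the fixed $K$, one instead restricts attention to the intersection with a small sup-ball, which is harmless since $\xi^*$ can be taken tiny and the invariance argument is unaffected. I would state and use the embedding constant explicitly, then close with the three-line verification.
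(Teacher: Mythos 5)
Your plan correctly handles the upper barrier (Lemma \ref{Lem2-5}) and the $X^{\alpha}$-bound (Lemma \ref{Lem2-6}), and you correctly read Lemma \ref{Lem2-4} as applicable with its ``$\gamma$'' replaced by the lower-barrier parameter $\xi$. But there is a genuine gap in the lower-bound step, and you in fact circle around it without closing it: Lemma \ref{Lem2-4} requires the sup-norm smallness $\|{\bf u}_0\|_{\infty}\le\delta_T$ of the \emph{initial data}, while membership in $\mathcal{M}^{t_0,\alpha}_{\gamma,\xi}$ only caps $\|{\bf u}_0\|_{\infty}$ by $\gamma$ (and by $C_{\alpha}K$ via the embedding). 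Neither $\gamma\ge\gamma^*$ nor $K=K_{T,\alpha,\gamma}$ is small---$K$ scales like $\gamma^2$ from Lemma \ref{Lem2-6}---and shrinking $\xi$ does nothing to reduce the size of admissible initial data. Your fallback, ``restrict attention to the intersection with a small sup-ball,'' changes the set $\mathcal{M}^{t_0,\alpha}_{\gamma,\xi}$ and thus proves a different, weaker statement; the lemma must hold for all of $\mathcal{M}^{t_0,\alpha}_{\gamma,\xi}$, including the large initial data near $\gamma\varphi$, where no direct comparison-principle lower bound of the form $\xi\varphi$ is available through Lemma \ref{Lem2-4}.

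The paper handles exactly this obstruction non-constructively, by a compactness and contradiction argument: suppose no $\xi^*$ works, take $\xi_n\to0$, $t_0^n$, ${\bf u}_{0,n}\in\mathcal{M}^{t_0^n,\alpha}_{\gamma,\xi_n}$ whose time-$T$ image escapes. Lemmas \ref{Lem2-5}--\ref{Lem2-6} force the escape to occur in the lower bound, so $\min_{i}\min_{x}u_i(t_0^n+T,x)<\xi_n\to 0$. Lemma \ref{Lem2-4} is then invoked in its \emph{contrapositive}: since the lower barrier fails to propagate, the initial data must be \emph{large}, $\|{\bf u}_{0,n}\|_{\infty}\ge\delta_T$. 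The $X^{\alpha}$-bound $K$ gives compactness (via $X^{\alpha}\hookrightarrow\hookrightarrow C^{\nu}(\bar\Omega)$ and Arzel\`a--Ascoli), so a subsequence of $({\bf u}_{0,n},\,\text{shifted coefficients})$ converges, and the limit solution ${\bf u}^{\infty}$ has a nontrivial nonnegative initial condition with $\|{\bf u}^{\infty}_0\|_{\infty}\ge\delta_T>0$; the strong maximum principle then gives $\min_{x}\min_i u_i^{\infty}(T,x)>0$, contradicting the limit $\min\to 0$. So the lower bound $\delta_T$ from Lemma \ref{Lem2-4} is used as a floor preventing the contradicting sequence from degenerating, not as a ceiling for admissible data; this inversion is the missing idea in your proposal.
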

	
	\begin{proof} 
		
		We proceed by contradiction to establish the existence of  $\xi^*$. To this end, suppose that there exists a sequence of positive numbers $(\xi_n)_n$ converging to zero and a sequence of initial times $\{t_0^n\}_{n\ge 1}$, and initial distributions $\{{\bf u}_{0,n}\}_{n\ge 1}$ such that  ${\bf u}_{0,n}\in\mathcal{M}^{t_0^n,\alpha}_{\gamma,\xi_n}$  and  $
		{\bf u}(t_0^n+T,\cdot;{\bf u}_{0,n},t_0^n)\notin \mathcal{M}^{t_0^n,\alpha}_{{\gamma},\xi_n}$ for every $ n\ge 1.$    Since ${\bf u}_{0,n}\in[{\bf 0},\gamma{ \varphi}(t_0^n,\cdot)]\cap [X^{\alpha}]^2$, $\|{\bf u}_{0,n}\|_{[X^{\alpha}]^2}\le K$, and $\varphi$ is $T$-periodic, then by Lemma \ref{Lem2-6}, for every  $n\ge 1$, ${\bf u}(t_0^n+T,\cdot;{\bf u}_{0,n},t_0^n)\in[{\bf 0},\gamma{ \varphi}(t_0^n,\cdot)]\cap [X^{\alpha}]^2$ and  $\|{\bf u}(t_0^n+T,\cdot;{\bf u}_{0,n},t_0^n)\|_{[X^{\alpha}]^2}\le K$ . As a result, for each $n\ge 1$, there exists $x_{n}\in\Omega$ such that either  $u_{1}(t_0^n+T,x_n;{\bf u}_{0,n},t_0^n)<\xi_n{ \varphi}(t_0^n,x_1)$ or $u_2(t_0^n+T,x_n;{\bf u}_{0,n},t_0^n)<\xi_n{ \varphi}(t_0^n,x_n)$. This implies that
		\begin{equation}\label{RE:13}
			\min\{u_{1,\min}(t_0^n+T,\cdot;{\bf u}_{0,n},t_0^n),u_{2,\min}(t_0^n+T,\cdot;{\bf u}_{0,n},t_0^n)\}<{\xi_n}\|\varphi\|_{[\mathcal{X}_T]^2}={\xi_n}\quad \forall\ n\ge 1.
		\end{equation}
		Moreover, recalling Lemma \ref{Lem2-4}, we also have 
		\begin{equation}\label{RE:12}
			\|{\bf u}_{0,n}\|_{\infty}\ge \delta_T\quad \forall\ n\ge 1,
		\end{equation}
		where $\delta_T$ is the positive number given by Lemma \ref{Lem2-4}.  Since there is $0<\nu\ll 1$ such that $X^{\alpha}$ is compactly embedded in $C^{\nu}(\overline{\Omega})$ (\cite[Theorem 1.6.1]{Henry}), and given that  $\|{\bf u}_0^n\|_{X^{\alpha}\times X^{\alpha}}\le K$ for every $n\ge 1$, we can invoke the  Arzela-Ascoli theorem to extract a subsequence $\{{\bf u}_{0,n'}\}_{n'\ge 1}$ and  ${\bf u}_0\in C^{\nu}(\overline{\Omega})$ such that 
		\begin{equation}\label{RE13}
			\lim_{n\to\infty}\|{\bf u}_{0,n'}-{\bf u}_0\|_{C^{\nu}(\bar{\Omega})}=0,\quad 0<\nu\ll 1.
		\end{equation}
		Furthermore, thanks to {\bf (H1)}, possibly after passing to another subsequence, for each $\tau\in\{r,s,a,b,c,e,f, g\}$, we have $\tau(t_{0}^n+t,\cdot)\to \tau^{\infty}(t,\cdot)$ locally uniformly in $\mathbb{R}\times C(\bar{\Omega})$ as $n\to\infty$, where $\tau^{\infty}$ is $T$-periodic. In addition,  by the similar arguments leading to \eqref{Eq1-2} and recalling \eqref{RE:13}, we may suppose that   ${\bf u}(t+t_{0}^n,\cdot;{\bf u}_{0,n},t_{0}^n)\to {\bf u}^{\infty}(t,\cdot)$ locally uniformly in $\mathbb{R}^+\times \bar{\Omega}$ as $n\to \infty$, and ${\bf u}^{\infty}(t,x)$ satisfies 
		\begin{equation}\label{DD6}
			\begin{cases}
				\partial_tu_1^{\infty}=d_1\Delta u_1^{\infty}+r^{\infty}(t,\cdot)u_2^{\infty}-(a^{\infty}(t,\cdot)+b^{\infty}(t,\cdot)u_1^{\infty}+c^{\infty}u_2^{\infty})u_1^{\infty} & x\in\Omega,\ t>0,\cr 
				\partial_tu_2^{\infty}=d_2\Delta u_2^{\infty}+s^{\infty}u_1^{\infty}-(e^{\infty}(t,\cdot)+f^{\infty}(t,\cdot)u_2^{\infty}+g^{\infty}(t,\cdot)u_1^{\infty}(t,\cdot))u_2^{\infty} & x\in\Omega,\ t>0,\cr 
				0=\partial_{\vec{n}}{\bf u}^{\infty} & x\in\partial\Omega,\ t>0,\ \cr
				{\bf u}^{\infty}(0,\cdot)={\bf u}_0 & x\in\bar{\Omega}.
				
			\end{cases}
		\end{equation}
		Since ${\bf u}_{0,n}\in [C^{+}(\bar{\Omega})]^2$ for each $n\ge 1,$ then ${\bf u}_{0}\in [C^+(\bar{\Omega})]^2$. Note also from \eqref{RE:12} that $\|{\bf u}_{0}\|_{\infty}\ge \delta_T$. Therefore, thanks to the comparison principle for cooperative systems, ${\bf u}^{\infty}(t,\cdot)\in [C^{++}(\bar{\Omega})]^2$ for every $t>0.$ In particular 
		\begin{equation}\label{RE:15}
			\min_{x\in\bar{\Omega}}\min\{u_1^{\infty}(T,x),u^{\infty}_2(T,x)\}>0.
		\end{equation}
		However, since ${\bf u}(T+t_{0,n},\cdot;{\bf u}_{0,n},t_0^n)\to {\bf u}^{\infty}(T,\cdot)$ as $n\to\infty$ in $C(\bar{\Omega})$, we deduce from \eqref{RE:13} that $\min_{x\in\bar{\Omega}}\min\{u_1^{\infty}(T,x),u_2^{\infty}(T,x)\}=0$, which is contrary to \eqref{RE:15}. Hence the desired result.
	\end{proof}
	
	Now, we give a proof of Theorem \ref{TH2}.
	
	\begin{proof}[Proof of Theorem \ref{TH2}]
		Let ${\bf u}(t,\cdot;{\bf u_0},t_0)$ be a classical solution of \eqref{Eq1} subject to a positive initial data ${\bf u}_0$ and initial time $t_0\in\mathbb{R}$. We first show that there exists $\gamma_0>0$ such that 
		
		\begin{equation}\label{DD2}
			\gamma_0\le \inf_{t_0\in \mathbb{R}}\min\Big\{\min_{x\in\bar{\Omega}}\frac{u_1(T+t_0,x;{\bf u}_0,t_0)}{\varphi_1(t_0+T,x)},\min_{x\in\bar{\Omega}}\frac{u_1(T+t_0,x;{\bf u}_0,t_0)}{\varphi_1(t_0+T,x)}\Big\}.
		\end{equation}
		Indeed, since 
		$\lim_{t\to 0^+}\sup_{t_0\in\mathbb{R}}\|{\bf u}(t_0+t,\cdot; {\bf u}_0,t_0)-{\bf u}_0\|=0$ 
		and $\|{\bf u}_0\|>0$, there exists $0<t^*\ll T$ such that 
		\begin{equation}\label{DD3}
			\inf_{t_0\in\mathbb{R}}\|{\bf u}(t_0+t^*,\cdot;{\bf u}_0,t_0)\|>0.
		\end{equation}
		Now, we proceed by contradiction to establish \eqref{DD2}. To this end, we suppose that there is a sequence of initial times $\{t_{0,n}\}_{n\ge 1}$ satisfying 
		\begin{equation}\label{DD4}
			\lim_{n\to\infty}\min\Big\{\min_{x\in\bar{\Omega}}\frac{u_1(T+t_{0,n},x;{\bf u}_0,t_{0,n})}{\varphi_1(t_{0,n}+T,x)},\min_{x\in\bar{\Omega}}\frac{u_1(T+t_{0,n},x;{\bf u}_0,t_{0,n})}{\varphi_1(t_{0,n}+T,x)}\Big\}=0.
		\end{equation}
		Consider the sequence of functions $\{{\bf u}^n(\cdot,\cdot) \}_n\ge 1$ defined by ${\bf u}^n(t,x)={\bf u}(t_0+t^*+t,x,{\bf u}_0,t_{0,n})$, $t\ge 0$ and $x\in\bar{\Omega}$. Since, $\sup_{n\ge 1}\|{\bf u}^n\|\le \sup_{t\ge t_0}\|{\bf u}(t,\cdot;{\bf u}_0,t_0)\|<\infty $ and $\sup_{n\ge 0}\|{\bf u}^n(0,\cdot)\|_{[X^{\alpha}]^2}<\infty$ (see \eqref{RE:10} for the last inequality), then, possibly after passing to a subsequence, we can employ the regularity theory for parabolic equations to conclude that there exist ${\bf u}_0^{\infty}\in C^{\nu}(\bar{\Omega})$, $0<\nu\ll1$ and  ${\bf u}^{\infty}$ satisfying \eqref{DD6}, such that ${\bf u}^n(t,\cdot)\to {\bf u}^{\infty }(t,\cdot)$ as $n\to\infty$, locally uniformly on $\mathbb{R}^+\times\bar{\Omega}$, and ${\bf u}^{\infty}(0,\cdot)={\bf u}^{\infty}_0$. Recalling that $\min_{i=1,2}\varphi_{i,\min}>0$, we deduce from \eqref{DD4} that
		$$
		\min_{x\in\bar{\Omega}}\{u_1^{\infty}(T-t^*,x),u_2^{\infty}(T-t^*,x)\}=0,
		$$
		which in view of the maximum principle for cooperative parabolic systems implies that $\|{\bf u}^{\infty}(T-t^*,\cdot)\|=0$. Thus, ${\bf u}^{\infty}(t,\cdot)={\bf 0}$ for all $t\ge 0$. In particular, ${\bf u}^{\infty}_0={ 0}$. However, by \eqref{DD3}, we have that $\|{\bf u}^{\infty}_0\|>0$. So, we have a contradiction. Therefore, \eqref{DD2} holds.
		
		\medskip
		
		Let $\delta_T$ be the positive number of Lemma \ref{Lem2-3} and set $   
		\gamma=\min\Big\{\delta_T,\gamma_0\Big\}$, so that 
		${\bf u}(t_0+T,\cdot;{\bf u}_0,t_0)\ge \gamma { \varphi}(t_0+T,\cdot)=\gamma{\varphi}(t_0,\cdot)$  for any  $t_0\in \mathbb{R}$. Then,  by Lemma  \ref{Lem2-4}, 
		\begin{equation}\label{ERT1}
			{\bf u}(t_0+nT,\cdot;{\bf u}_0,t_0)\ge \gamma { \varphi}(t_0+nT,\cdot)=\gamma { \varphi}(t_0,\cdot)\quad \forall\ n\ge 1 \quad \text{and} \quad t_0\in\mathbb{R}.
		\end{equation}
		Finally, taking 
		$$ 
		K:=1+\sup_{t\ge 0, t_0\in\mathbb{R}}\|{\bf u}(t+t_0,\cdot,{\bf u}_0,t_0)\|_{\infty}+\|a\|_{\mathcal{X}_T}+\|b\|_{\mathcal{X}_T}+\|c\|_{\mathcal{X}_T}+\|e\|_{\mathcal{X}_T}+\|f\|_{\mathcal{X}_T}<\infty,
		$$ 
		we obtain 
		\begin{equation*}
			\partial_tu_i\ge d_i\Delta u_i-K^2u_i \quad \forall\ t\ge 0,\ i=1,2. 
		\end{equation*}
		We can now employ the comparison principle for parabolic equations to conclude that, for each   $n\ge 1$ and  $0\le t\le 1$,
		$$ 
		{ u}_{i}(t_0+nT+t,x;{\bf u}_0,t_0)\ge e^{-tK^2}\min_{x\in\overline{\Omega}}{ u}_{i}(t_0+nT,x;{\bf u}_0,t_0)\quad \forall\ i=1,2. 
		$$
		Taking $\eta_*:=\gamma e^{-K^2}\min_{i=1,2}\varphi_{i,\min}$, the last inequality along with  \eqref{ERT1} implies that  
		${\bf u}(t_0+t,\cdot;{\bf u_0},t_0)\ge (\eta_*,\eta_*)$  for all $t\ge T$ and $t_0\in\mathbb{R}$. 
	\end{proof}
	
	We end this subsection with a proof of Proposition \ref{Prop1}. 
	
	\begin{proof}[Proof of Proposition \ref{Prop1}] Suppose that {\bf (H1)} holds. Let $\lambda_*$ be the principal eigenvalue of \eqref{Eq2} and $\varphi$ be a corresponding positive $T$-periodic eigenfunction. Now, we proceed to prove {\rm (i)} and {\rm (ii)}.
		
		\medskip
		
		{\rm (i)} First, suppose that $(r+s)^2\le,\ne 4(a+s)e$. Set $\Omega_T:=[0,T]\times\Omega$ and $\mathcal{O}:=\{(t,x)\in \Omega_T : (a+s)e>0\}$.  Hence $\mathcal{O}^c:=\Omega_T\setminus\mathcal{O}\subset\{(t,x)\in \Omega_T : (r+s)=0\}$. 
		Next, observe that 
		$$
		\begin{cases}
			\lambda_*\varphi_1^2+\frac{1}{2}\frac{d}{dt}\varphi_1^2=d_1\varphi_1\Delta\varphi_1+r\varphi_1\varphi_2-(a+s)\varphi_1^2 & x\in\Omega,\ 0<t<T,\cr
			\lambda_*\varphi_2^2+\frac{1}{2}\frac{d}{dt}\varphi_2^2=d_2\varphi_2\Delta \varphi_2+s\varphi_1\varphi_2-e\varphi_2^2 & x\in\Omega,\ 0<t<T,\cr 
			0=\partial_{\vec{n}}\varphi_1=\partial_{\vec{n}}\varphi_2 & x\in\partial\Omega,\ 0<t<T.
		\end{cases}
		$$
		Integrating the first two equations, we obtain
		$$ 
		\lambda_*\int_{\Omega_T}\varphi_1^2+\frac{1}{2}\int_{\Omega}(\varphi_1^2(T,\cdot)-\varphi_1^2(0,\cdot)\Big)=-d_1\int_{\Omega_T}|\nabla\varphi_1|^2+\int_{\Omega_T}r\varphi_1\varphi_2-\int_{\Omega_T}(a+s)\varphi_1^2
		$$
		and 
		$$ 
		\lambda_*\int_{\Omega_T}\varphi_2^2+\frac{1}{2}\int_{\Omega}(\varphi_2^2(T,\cdot)-\varphi_2^2(0,\cdot))=-d_2\int_{\Omega_T}|\nabla \varphi_2|^2+\int_{\Omega_T}s\varphi_1\varphi_2-\int_{\Omega_T}e\varphi_2^2.
		$$
		It follows from the last two equations and the fact that $r+s=0$ on $\mathcal{O}^c$, that 
		\begin{align*}
			\lambda_*\int_{\Omega_T}(\varphi_1^2+\varphi_2^2)\le&- \int_{\Omega_T}\left((a+s)\varphi_1^2-(r+s)\varphi_1\varphi_2+e\varphi_2^2\right)\cr 
			=&- \int_{\mathcal{O}^c}\left((a+s)\varphi_1^2-(r+s)\varphi_1\varphi_2+e\varphi_2^2\right)- \int_{\mathcal{O}}\left((a+s)\varphi_1^2-(r+s)\varphi_1\varphi_2+e\varphi_2^2\right)\cr
			\le &- \int_{\mathcal{O}}\left((a+s)\varphi_1^2-(r+s)\varphi_1\varphi_2+e\varphi_2^2\right)\cr
			=&-\int_{\mathcal{O}}\Big(\sqrt{a+s}\varphi_1-\frac{r+s}{2\sqrt{a+s}}\varphi_2\Big)^2+\int_{\mathcal{O}}\Big(\frac{(r+s)^2}{4(a+s)}-e\Big)\varphi_2^2.
		\end{align*}
		Therefore, since $e-\frac{(r+s)^2}{4(a+s)} \ge,\ne 0$ on $\mathcal{O}$ and $\varphi>>{\bf 0}$, we deduce from the last inequality that  $\lambda_*<0$.
		
		\medskip 
		
		Next, suppose that $1<\big(\frac{a+s}{r}\big)_{\min}\big(\frac{e}{s}\big)_{\min}$. Then $e_{\min}>0$. Take $l_0:=\frac{1}{2}\Big(\frac{1}{\big(\frac{e}{s}\big)_{\min}}+\big(\frac{s+a}{r}\big)_{\min}\Big)$. Hence, there is $\varepsilon_0>0$ such that  $\frac{1}{\big(\frac{e}{s}\big)_{\min}}<l_0-\varepsilon_0<l_0<l_0+\varepsilon_0<\big(\frac{s+a}{r}\big)_{\min}$. Next, set $\tilde\varphi_1\equiv 1$ and $\tilde\varphi_2=l_0\tilde\varphi_1$. Then, since $l_0+\varepsilon_0<\big(\frac{a+s}{r}\big)_{\min}$, we have 
		$$
		r\tilde\varphi_2-(s+r)\tilde\varphi_1<-\varepsilon_0r \le -\varepsilon_0r_{\min}\tilde\varphi_1 \quad x\in\bar{\Omega},\ 0\le t\le T.
		$$
		Similarly, since $1 <(l_0-\varepsilon_0)\big(\frac{e}{s}\big)_{\min}$ and $\tilde\varphi_2=l_0\tilde\varphi_1$, we have
		$$
		s\tilde\varphi_1-e\tilde\varphi_2<-e\varepsilon_0\tilde\varphi_1=\frac{e\varepsilon_0}{l_0}\tilde\varphi_2\le-\frac{e_{\min}\varepsilon_0}{l_0}\tilde\varphi_2\quad x\in\bar{\Omega},\ 0\le t\le T.
		$$
		Therefore, since $\tilde\varphi=(\tilde\varphi_1,\tilde\varphi_2)$ is constant and positive, hence $T$-periodic and satisfies the homogeneous Neumann boundary conditions, we can employ the comparison principe for principle eigenvalue of linear cooperative system to deduce that $\lambda_*\le -\min\{\varepsilon_0r_{\min},\frac{e_{\min}\varepsilon_0}{l_0}\}<0$ for any choice of diffusion rates $d_1>0$ and $d_2>0$.

		\medskip
		
		{\rm(ii)} Observe that 
		$$
		\begin{cases}
			\lambda_*+\partial_t\ln(\varphi_1)=\frac{d_1}{\varphi_1}\Delta\varphi_1+r\frac{\varphi_2}{\varphi_1}-(a+s) & x\in\Omega,\ 0<t<T,\cr
			\lambda_*+\partial_t\ln(\varphi_2)=\frac{d_2}{\varphi_2}\Delta\varphi_2+s\frac{\varphi_1}{\varphi_2}-e & x\in\Omega,\ 0<t<T,\cr
			0=\partial_{\vec{n}}\varphi_1=\partial_{\vec{n}}\varphi_2 & x\in\partial\Omega, 0<t<T.
		\end{cases}
		$$
		Hence, since $\varphi$ is $T$-periodic, integrating the first two equations and add up the resulting equations, we obtain
		\begin{align*}
			2\lambda_*T|\Omega|=&\int_{\Omega}\sum_{i=1}^2\ln\left(\frac{\varphi_i(0,\cdot)}{\varphi_i(T,\cdot)}\right)+\int_{\Omega_T}\sum_{i=1}^2d_i|\nabla \ln\varphi_i|^2 +\int_{\Omega_T}\Big(r\frac{\varphi_2}{\varphi_1}+s\frac{\varphi_1}{\varphi_2}\Big)-\int_{\Omega_T}(a+s+e)\cr
			=&\int_{\Omega_T}\sum_{i=1}^2d_i|\nabla \ln\varphi_i|^2 +\int_{\Omega_T}\Big(r\frac{\varphi_2}{\varphi_1}+s\frac{\varphi_1}{\varphi_2}\Big)-\int_{\Omega_T}(a+s+e)\cr
			\ge & \int_{\Omega_T}\Big(r\frac{\varphi_2}{\varphi_1}+s\frac{\varphi_1}{\varphi_2}\Big)-\int_{\Omega_T}(a+s+e)\cr
			\ge& \int_{\Omega_T}(2\sqrt{rs}-(a+s+e)).
		\end{align*}
		Hence $\lambda_*\ge\frac{1}{T|\Omega|}\int_{\Omega_T}(2\sqrt{rs}-(a+s+e))$. It then follows that $\lambda_*>0$ if $ \int_{\Omega_T}\sqrt{rs}>\frac{1}{2}\int_{\Omega_T}(a+s+e))$.  
		
		\medskip
		
		Next,  suppose that $1>\big(\frac{a+s}{r}\big)_{\max}\big(\frac{e}{s}\big)_{\max}$. We distinguish two cases.\\
		{\bf Case 1.}  $e\equiv0$. Then integrating the second equation of \eqref{Eq2}, we obtain
		$$
		\lambda_*\int_{\Omega_T}\varphi_2=\int_{\Omega}s\varphi_1>0,
		$$
		which implies that $\lambda_*>0$. \\
		{\bf Case 2.} $e\ge,\ne 0$. Choose $l_1>0$ and $\varepsilon_1>0$ satisfying $\frac{1}{\big(\frac{a+s}{r}\big)_{\max}}>l_1+\varepsilon_1>l_1>l-\varepsilon_1>\big(\frac{e}{s}\big)_{\max}$. Set $\tilde{\varphi}_2=1$ and $\tilde{\varphi}_1=l_1$. Then, since $1>(l_1+\varepsilon_1)\big(\frac{a+s}{r}\big)_{\max}$, we have
		$$
		r\tilde{\varphi}_2-(a+s)\tilde{\varphi}_1>\varepsilon_1(a+s)>\frac{\varepsilon_1(a+s)_{\min}}{l_1}\tilde{\varphi}_1\quad x\in\bar{\Omega},\ 0\le t\le T.
		$$
		Similarly, since $(l_1-\varepsilon_1)>\big(\frac{e}{s}\big)_{\max}$, we have 
		$$
		s\tilde{\varphi}_1-e\tilde{\varphi}_2>\varepsilon_1s\ge \varepsilon_1s_{\min}\tilde{\varphi}_2 \quad x\in\bar{\Omega},\ 0\le t\le T.
		$$
		Therefore, since $\tilde{\varphi}=(\tilde{\varphi}_1,\tilde{\varphi}_2)$ is constant and positive, then by the comparison principle for principal eigenvalues or linear cooperative systems, we have that $\lambda_*\ge \min\{\varepsilon_1s_{\min},\frac{\varepsilon_1(a+s)_{\min}}{l_1}\}>0$ for any choice of diffusion rates $d_1>0$ and $d_2>0$.
	\end{proof}
	
	\subsection{Proof of Theorem \ref{TH3}}
	
	\begin{proof}[Proof of Theorem \ref{TH3}]
		When the parameters are time-independent, the existence of a positive steady state of \eqref{Eq1} when $\lambda_*>0$ is proved in \cite{CCM2020}. The existence of a constant positive positive equilibrium for the kinetic system when $\lambda_*>0$ is proved in \cite{bouguima2012asymptotic}. Hence, we shall only prove the existence of a time-periodic solution when the environment is time-periodic and spatially homogeneous.
		
		\medskip

		\noindent Fix $0<\alpha<1$ and $\gamma^*$ the positive number obtained in Lemma \ref{Lem2-5}. Let $K=K_{T,\alpha,\gamma^*}$ be given by Lemma \ref{Lem2-6}, and $\xi^*>0$ be provided by Lemma \ref{Lem2-7}. Consider the closed bounded and convex subset of $[X^{\alpha}]^2$,  given by $\mathcal{M}^{0,\alpha}_{\gamma^*,\xi^*}$, where the set $\mathcal{M}^{T-0,\alpha}_{\gamma^*,\xi^*}$ is defined by \eqref{RE:18} for every $t_0\in\mathbb{R}$ (Here we take $t_0=0$). Hence, by Lemma \ref{Lem2-7}, we have that ${\bf u}(T,\cdot;{\bf u}_0,0)\in\mathcal{M}^{0,\alpha}_{\gamma^*,\xi^*}$ whenever ${\bf u}_0\in\mathcal{M}^{0,\alpha}_{\gamma^*,\xi^*}$. This shows that the Poincar\'e map ${\bf u}(T,\cdot): \mathcal{M}^{0,\alpha}_{\gamma^*,\xi^*}\ni {\bf u}_0\mapsto {\bf u}(T,\cdot,{\bf u}_0,0)\in\mathcal{M}^{0,\alpha}_{\gamma^*,\xi^*}$ is well defined. 
		Note from \eqref{RE:10} that, for all $\beta\in(\alpha,1)$,
		$$
		\|{\bf u}(T,\cdot,{\bf u}_0,0)\|_{[X^{\beta}]^2}\le \frac{c_{\beta}}{\delta^{\beta}}\gamma^*\Big(T^{-\beta}e^{-\delta T}+M_0\gamma^*\int_{0}^{\infty}s^{-\beta}e^{-\delta s}ds\Big)\quad \forall\ {\bf u}_0\in\mathcal{S}^{*}_T.
		$$
		Since $X^{\beta}$ is compactly embedded in $X^{\alpha}$ for all $\beta\in(\alpha,1)$, the Poincar\'e map ${\bf u}(T,\cdot)$ is compact  on $\mathcal{M}^{0,\alpha}_{\gamma^*,\xi^*}$. Moreover, given that $\tilde{F}$ is locally lipschitz in ${\bf u}$, it follows from the regularity theory for parabolic equations that  the Poincar\'e map  ${\bf u}(T,\cdot,{\bf u_0},0)$ is continue with respect to ${\bf u_0}$ in $\mathcal{M}^{0,\alpha}_{\gamma^*,\xi^*}$. Thus, thanks to the Schauder fixed point theorem, the function ${\bf u}(T,\cdot,\cdot,0)$ admits a fixed point in $\mathcal{M}^{0,\alpha}_{\gamma^*,\xi^*}$, say ${\bf u}^{T,*}$, that is   there exists ${\bf u}^{T,*}\in\mathcal{M}^{0,\alpha}_{\gamma^*,\xi^*}$  satisfying ${\bf u}(T,\cdot,{\bf u}^{T,*},0)={\bf u}^{T,*}$. Thus, ${\bf u}^*(t,\cdot)={\bf u}(t,\cdot,{\bf u}^{T,*},0)$ is a $T$-periodic solution of \eqref{Eq1}. On the other hand, by the maximum principle for parabolic equations, since ${\bf u^{T,*}}>0$, we have ${\bf u}_{\inf}^*>0$.
	\end{proof}

	\subsection{Proof of Theorem \ref{TH4}}
	
	Throughout this subsection, we shall suppose that {\bf (H1)-(H2)} hold. We first establish the following result.
	\begin{lem}\label{Lem-p-1} Suppose that $\lambda_{*}>0$ and \eqref{Eq1} has a $T$-periodic positive entire solution $\tilde{\bf u}(t,\cdot)$ satisfying {\bf (H2)}. Consider the closed sets 
		\begin{equation}
			\tilde{\mathbb{S}}_{t}:=[C^+(\bar{\Omega})]^2 \cap [ {\bf 0} , \tilde{\bf u}({t},\cdot)], \quad t\in\mathbb{R}.
		\end{equation}
		Then the following conclusions hold.
		\begin{itemize}
			\item[\rm (i)] If ${\bf u}_0\in \tilde{\mathbb{S}}_{t_0}$ for some $t_0\in\mathbb{R}$, then ${\bf u}(t,\cdot;{\bf u}_0,t_0)\in\tilde{\mathbb{S}}_t$ for all $t\ge t_0$. 
			
			\item[\rm(ii)] If $0\le {\bf u}_0<(\tilde{u}_{1,\min},\tilde{u}_{2,\min})$ and ${\bf u}_0\ne 0$, then  $\|{\bf u}(t+t_0,\cdot;{\bf u}_0,t_0)-\tilde{\bf u}(t_0+t,\cdot)\|\to 0$ as $t\to\infty$,   uniformly in $t_0\in\mathbb{R}$. 
		\end{itemize}
		
	\end{lem}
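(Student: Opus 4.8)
The plan is to exploit one structural consequence of {\bf (H2)} and then run a monotone iteration plus a scaling argument. Write $N_1,N_2$ for the two reaction terms on the right of \eqref{Eq1}. The key point is that, for \emph{any} nonnegative solution $\mathbf u$, the difference $\mathbf v:=\tilde{\mathbf u}-\mathbf u$ satisfies, after subtracting the two copies of \eqref{Eq1} and telescoping the quadratic terms so that $\tilde u_1$ (not $u_1$) multiplies $v_2$ in the first equation and $\tilde u_2$ (not $u_2$) multiplies $v_1$ in the second, a \emph{linear} Neumann problem whose off-diagonal coefficients are exactly $r-c\tilde u_1$ and $s-g\tilde u_2$; both are $\ge 0$ \emph{everywhere} by {\bf (H2)}, while the diagonal coefficients are bounded because $\mathbf u$ is globally bounded by \eqref{NN-Eq1}. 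Hence the $\mathbf v$-system is cooperative, and since $\mathbf v(t_0,\cdot)=\tilde{\mathbf u}(t_0,\cdot)-\mathbf u_0\ge\mathbf 0$ when $\mathbf u_0\in\tilde{\mathbb{S}}_{t_0}$, the comparison principle for cooperative linear parabolic systems gives $\mathbf v(t,\cdot)\ge\mathbf 0$ for all $t\ge t_0$; with the nonnegativity of $\mathbf u$ this is exactly part~(i). The same telescoping, applied to two nonnegative solutions one of which remains $\le\tilde{\mathbf u}$, shows that the semiflow of \eqref{Eq1} is order preserving on the region $[\mathbf 0,\tilde{\mathbf u}]$, which I use repeatedly below.

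For part~(ii), since $\mathbf u_0\le(\tilde u_{1,\min},\tilde u_{2,\min})\le\tilde{\mathbf u}(t_0,\cdot)$, part~(i) already gives the upper bound $\mathbf u(t+t_0,\cdot;\mathbf u_0,t_0)\le\tilde{\mathbf u}(t+t_0,\cdot)$ for all $t\ge 0$, uniformly in $t_0$; the task is to produce a matching lower bound converging to $\tilde{\mathbf u}$. By Theorem~\ref{TH2} (applicable since $\lambda_*>0$ and $\mathbf u_0\ne\mathbf 0$) there is $\eta_*>0$ with $\mathbf u(t+t_0,\cdot;\mathbf u_0,t_0)\ge\eta_*\mathbf 1$ for all $t\ge T$, $t_0\in\mathbb R$; choosing $\delta\in(0,\delta_T]$ small enough that $\delta\le\eta_*/\varphi_{\max}$ and $\delta\varphi(t_0,\cdot)\le\tilde{\mathbf u}(t_0,\cdot)$ for every $t_0$, we get $\mathbf u(t_0+T,\cdot;\mathbf u_0,t_0)\ge\delta\varphi(t_0,\cdot)$. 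Fix $t_0$ and let $Q_{t_0}:\mathbf w\mapsto\mathbf u(t_0+T,\cdot;\mathbf w,t_0)$ be the period map. Lemma~\ref{Lem2-4} yields $Q_{t_0}(\delta\varphi(t_0,\cdot))\ge\delta\varphi(t_0,\cdot)$, and since $Q_{t_0}$ is order preserving on $[\mathbf 0,\tilde{\mathbf u}(t_0,\cdot)]$ (part~(i)) with $Q_{t_0}(\tilde{\mathbf u}(t_0,\cdot))=\tilde{\mathbf u}(t_0,\cdot)$, the iterates $Q_{t_0}^n(\delta\varphi(t_0,\cdot))$ increase to a fixed point $\mathbf w^*(t_0,\cdot)\in[\delta\varphi(t_0,\cdot),\tilde{\mathbf u}(t_0,\cdot)]$, which generates a strictly positive $T$-periodic entire solution $\mathbf w^*$ with $\delta\varphi\le\mathbf w^*\le\tilde{\mathbf u}$.

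The crux — and the step I expect to be the main obstacle — is to prove $\mathbf w^*=\tilde{\mathbf u}$, via a scaling/sliding argument exploiting $b_{\min},f_{\min}>0$. A direct computation gives, for $\mu\in(0,1)$, $N_i(t,x,\mu\tilde{\mathbf u})-\mu N_i(t,x,\tilde{\mathbf u})=\mu(1-\mu)\rho_i(t,x)\tilde u_i(t,x)$ with $\rho_1=b\tilde u_1+c\tilde u_2\ge b_{\min}\tilde u_{1,\min}$ and $\rho_2=f\tilde u_2+g\tilde u_1\ge f_{\min}\tilde u_{2,\min}$, so $\mu\tilde{\mathbf u}$ is a \emph{strict} subsolution of \eqref{Eq1}. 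Put $\lambda:=\min_{i,\,t\in[0,T],\,x}w^*_i/\tilde u_i\in(0,1]$, so $\lambda\tilde{\mathbf u}\le\mathbf w^*$. If $\lambda<1$, then $\mathbf z:=\mathbf w^*-\lambda\tilde{\mathbf u}\ge\mathbf 0$ is $T$-periodic and solves a Neumann problem $\partial_t\mathbf z=D\Delta\mathbf z+A(t,x)\mathbf z+\mathbf q(t,x)$ in which $A$ has nonnegative off-diagonal entries (again by {\bf (H2)}, the intermediate states $\theta\mathbf w^*+(1-\theta)\lambda\tilde{\mathbf u}$ lying in $[\mathbf 0,\tilde{\mathbf u}]$) and $\mathbf q$ is bounded below by the positive constant $\lambda(1-\lambda)\min\{b_{\min}\tilde u_{1,\min}^2,\,f_{\min}\tilde u_{2,\min}^2\}$. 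Comparing each component of $\mathbf z$ with the corresponding constant periodic subsolution then forces $\inf_{t,x}z_i>0$, whence $\lambda+\min_i(\inf_{t,x}z_i)/\tilde u_{i,\max}\le\min_{i,t,x}w^*_i/\tilde u_i=\lambda$, a contradiction. Thus $\lambda=1$, i.e. $\mathbf w^*\ge\tilde{\mathbf u}$, so $\mathbf w^*=\tilde{\mathbf u}$.

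It then remains to assemble the squeeze with uniformity in $t_0$. Order preservation on $[\mathbf 0,\tilde{\mathbf u}]$ gives, for $s\ge t_0+T$,
\begin{equation*}
\mathbf u\big(s,\cdot;\delta\varphi(t_0+T,\cdot),t_0+T\big)\le\mathbf u(s,\cdot;\mathbf u_0,t_0)\le\tilde{\mathbf u}(s,\cdot),
\end{equation*}
and along the times $s=t_0+T+nT$ the left member equals $Q_{t_0}^n(\delta\varphi(t_0,\cdot))\to\tilde{\mathbf u}(t_0,\cdot)$. By $T$-periodicity the functions $(t_0\bmod T,x)\mapsto Q_{t_0}^n(\delta\varphi(t_0,\cdot))(x)$ are continuous on a compact set, nondecreasing in $n$, with continuous limit $\tilde{\mathbf u}$, so Dini's theorem makes the convergence uniform in $t_0$; filling in the gaps of length $<T$ by the solution maps over windows of length $\le T$ (equicontinuous, since all data stay in the bounded set $[\mathbf 0,\tilde{\mathbf u}]$) upgrades this to $\sup_{t_0}\|\mathbf u(s,\cdot;\delta\varphi(t_0+T,\cdot),t_0+T)-\tilde{\mathbf u}(s,\cdot)\|\to 0$ as $s\to\infty$. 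The displayed inequality then yields $\sup_{t_0}\|\mathbf u(t+t_0,\cdot;\mathbf u_0,t_0)-\tilde{\mathbf u}(t+t_0,\cdot)\|\to 0$, which is part~(ii).
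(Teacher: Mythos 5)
Your proof of part~(i) is essentially the paper's argument: telescope the difference $\tilde{\mathbf u}-\mathbf u$ so that the off-diagonal coefficients become $r-c\tilde u_1$ and $s-g\tilde u_2$, note these are nonnegative by~{\bf (H2)}, and invoke the comparison principle for cooperative systems (the paper phrases this via a contradiction on a maximal existence time $t_{\max}$, but it is the same mechanism). Your observation that the semiflow is then order preserving on $[\mathbf 0,\tilde{\mathbf u}]$ is also used implicitly in the paper (e.g.\ in Step~1 of the proof of Theorem~\ref{TH4}).

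Your proof of part~(ii) is correct but follows a genuinely different route. The paper introduces the scalar quantity
$\tilde\sigma(t)=\inf\{\sigma>1:\ \tilde{\mathbf u}(t+t_0,\cdot)\le\sigma\,\mathbf u(t+t_0,\cdot;\mathbf u_0,t_0)\ \forall t_0\in\mathbb R\}$,
which builds the uniformity over $t_0$ directly into the definition; it shows $\tilde\sigma$ is nonincreasing, and then rules out $\tilde\sigma^\infty>1$ by constructing the exponentially decaying supersolution $\tilde\sigma(\tau)e^{-\delta_*t}\mathbf u(t+\tau+t_0,\cdot)$, where $\delta_*$ is calibrated using the persistence bound $\eta^*$ and $\min\{b_{\min},f_{\min}\}$ so that the quadratic terms absorb the $\delta_*$-penalty; this yields $\tilde\sigma(\tau+1)\le\tilde\sigma(\tau)e^{-\delta_*}$ and a contradiction. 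You instead (a) anchor a uniform lower bound $\delta\varphi$ using Theorem~\ref{TH2} and Lemma~\ref{Lem2-4}, (b) run a Sattinger-type monotone iteration of the period map on the order interval $[\delta\varphi(t_0,\cdot),\tilde{\mathbf u}(t_0,\cdot)]$ to produce a minimal $T$-periodic solution $\mathbf w^*$, (c) prove $\mathbf w^*=\tilde{\mathbf u}$ by a sliding/scaling argument using the strict subhomogeneity supplied by $b_{\min},f_{\min}>0$, and (d) recover uniformity in $t_0$ via Dini's theorem and equicontinuity of the solution maps over windows of length $\le T$. Both approaches exploit the same structural facts (cooperativity below $\tilde{\mathbf u}$ from~{\bf (H2)}, the quadratic self-limitation, and uniform persistence). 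The paper's scalar-ratio argument is shorter because uniformity comes for free; yours is a more standard monotone-dynamics template (monotone iteration plus a minimal/maximal fixed-point uniqueness step), at the cost of an extra Dini/equicontinuity layer at the end, but it does localize the ``strict subhomogeneity'' step into a clean self-contained lemma ($\mu\tilde{\mathbf u}$ is a strict subsolution for $\mu\in(0,1)$), which some readers may find more transparent.

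One small point worth making explicit if you write this up: when you assert that the iterates $Q_{t_0}^n(\delta\varphi(t_0,\cdot))$ converge to a fixed point, the pointwise monotone limit alone need not be continuous, nor need it commute with $Q_{t_0}$; you should invoke the compactness of the period map (which follows from Lemma~\ref{Lem2-6} and the compact embedding $X^\beta\hookrightarrow X^\alpha$, as used in the proof of Theorem~\ref{TH3}) to upgrade pointwise convergence to $C(\bar\Omega)$-convergence before passing to the limit in $Q_{t_0}$.
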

	\begin{proof} Let $\tilde{\bf u}(t,\cdot)$ be a strictly positive entire solution satisfying {\bf (H2)} and set 
		\begin{equation}\label{R-eq5}
			\tilde{h}_1(t,x)=r(t,x)-c(t,x)\tilde{u}_1(t,x)\quad \text{and}\quad \tilde{h}_2(t,x)=s(t,x)-g(t,x)\tilde{u}_2(t,x)\quad x\in\bar{\Omega},\ t\in\mathbb{R}.
		\end{equation}
		Hence $\tilde{h}_1\ge 0$ and $\tilde{h}_2\ge 0$. Now we proceed to prove assertions {\rm (i)} and {\rm (ii)}.
		
		\medskip
		
		{\rm (i)} Thanks to the continuous dependence  of solutions of \eqref{Eq1} with respect to initial data, without loss of generality, we fix $t_0\in\mathbb{R}$ and ${\bf u}_0\in\tilde{\mathbb{S}}_{t_0}$ such that ${\bf u}_0<<\tilde{\bf u}(t_0,\cdot)$. Let $t_{\max}$ be defined as 
		$$
		t_{\max}:=\sup\{t>0 : {\bf u}(\tau+t_0,\cdot;{\bf u}_0;t_0)\in \tilde{\mathbb{S}}_{\tau} \ 0\le \tau\le t\}.
		$$
		Since $\tilde{\bf u}(t_0,\cdot)-{\bf u}_0\in C^{++}(\bar{\Omega})$, then $t_{\max}$ is well defined and $t_{\max}\in (0,\infty]$. We claim that $t_{\max}=\infty$. Suppose to the contrary that $t_{\max}<\infty$. Hence, there is $x^*\in\bar{\Omega}$ such that either
		\begin{equation}\label{R-eq3}
			\tilde{ u}_1(t_0+t_{\max},x^*)-u_2(t_0+t_{\max},x^*;{\bf u}_0,t_0)=0 \quad \text{or}\quad \tilde{ u}_2(t_0+t_{\max},x^*)-u_2(t_0+t_{\max},x^*;{\bf u}_0,t_0)=0. 
		\end{equation}
		Define $\tilde{\bf w}(t,\cdot)=\tilde{\bf u}(t,\cdot)-{\bf u}(t,\cdot;{\bf u}_0,t_0)$, $t\ge t_0$. Then 
		\begin{equation}\label{PP12}
			\begin{cases}
				\partial_t \tilde{w}_1=d_1\Delta \tilde{w}_1+\tilde{h}_1\tilde{w}_2-(a+s+b(\tilde{u}_1+u_1)+cu_2)\tilde{w}_1 & x\in\Omega,\ t>t_0,\cr 
				\partial_t\tilde{w}_2=d_2\Delta\tilde{w}_2+\tilde{h}_2\tilde{w}_1-(e+f(\tilde{u}_2+u_2)+gu_1)\tilde{w}_2 & x\in\Omega,\ t>t_0,\cr
				0=\partial_{\vec{n}}\tilde{w}_1=\partial_{\vec{n}}\tilde{w}_2 & x\in\partial\Omega,\ t>t_0,
			\end{cases}
		\end{equation}
		where $\tilde{h}_1\ge 0$ and $\tilde{h}_2\ge 0$ are given by \eqref{R-eq5}. Observing that $w_1(t,\cdot)\ge 0$ and $w_{2}(t,\cdot)\ge 0$ for $t_0\le t\le t_{\max}$, it follows from \eqref{PP12} that 
		\begin{equation*}
			\begin{cases}
				\partial_t\tilde{w}_1\ge d_1\Delta \tilde{w}_1 -(a+s+b(\tilde{u}_1+u_1)+cu_2)\tilde{w}_1 & x\in\Omega,\ t_0<t\le t_0+t_{\max},\cr 
				0=\partial_{\vec{n}}\tilde{w}_1 & x\in\partial\Omega,\ t_0<t\le t_{\max}+t_{0}
			\end{cases}
		\end{equation*}
		and 
		\begin{equation*}
			\begin{cases}
				\partial_t\tilde{w}_2\ge d_2\Delta \tilde{w}_2 -(e+f(\tilde{u}_1+u_2)+gu_1)\tilde{w}_2 & x\in\Omega,\  t_0<t\le t_0+t_{\max},\cr 
				0=\partial_{\vec{n}}\tilde{w}_2 & x\in\partial\Omega,\ t_0<t\le t_{0}+t_{\max}.
			\end{cases}
		\end{equation*}
		Therefore, since $\tilde{\bf w}(t_0,\cdot)\in C^{++}(\bar{\Omega})$, it follows from the strong maximum principle for parabolic equations that $\tilde{w}_{1}(t_0+t_{\max},\cdot)\in C^{++}(\bar{\Omega})$ and $\tilde{w}_2(t_0+t_{\max},\cdot)\in C^{++}(\bar{\Omega})$, which contradicts \eqref{R-eq3}. Hence we must have that $t_{\max}=\infty$, which yields the desired result.
		
		\medskip
		
		{\rm (ii)} Fix ${\bf 0}<{\bf u}_0<(\tilde{u}_{1,\min},\tilde{u}_{2,\min})$ and ${\bf u}_0\ne 0$. Set 
		$$
		\tilde{\sigma}(t)=\inf\{\sigma>1: \tilde{\bf u}(t+t_0)\le \sigma {\bf u}(t+t_0,\cdot;{\bf u}_0,t_0)\ \forall\ t_0\in\mathbb{R}\},\quad t\ge 0.
		$$
		We first claim that $\tilde{\sigma}(t)$ is nonincreasing in $t\ge 0$. 
		Indeed, fix $t_0\in\mathbb{R}$ and $\tau\ge 0$. Then, since ${\bf u}(t,\cdot;{\bf u}_0,t_0)\le \tilde{\bf u}(t,\cdot)$ for all $t\ge t_0$, we have 
		\begin{align*}
			\partial_t(\tilde{\sigma}(\tau)u_1)=&d_{1}\Delta (\tilde{\sigma}(\tau)u_1)+(r-cu_1)(\tilde{\sigma}(\tau)u_2)-(a+s+bu_1)(\tilde{\sigma}(\tau)u_1)\cr 
			\ge & d_1\Delta (\tilde{\sigma}(\tau)u_1)+(r-c\tilde{u}_1)(\tilde{\sigma}(\tau)u_2)-(a+s+b\tilde{u}_1)(\tilde{\sigma}(\tau)u_1) \quad t> \tau+t_0.
		\end{align*}
		Similarly
		$$
		\partial_t(\tilde{\sigma}(\tau)u_2)=d_2\Delta (\tilde{\sigma}(\tau)u_2)+(s-g\tilde{u}_2)(\tilde{\sigma}(\tau)u_1)-(e+f\tilde{u}_2)(\tilde{\sigma}(\tau)u_2) \quad t\ge \tau+t_0.
		$$
		Hence
		\begin{equation}
			\begin{cases}
				\partial_t(\tilde{\sigma}(\tau)u_1)\ge d_1\Delta (\tilde{\sigma}(\tau)u_1)+\tilde{h}_1(t,\cdot)(\tilde{\sigma}(\tau)u_2)-(a+s+b\tilde{u}_1)(\tilde{\sigma}(\tau)u_1) & x\in\Omega,\ t> \tau+t_0,\cr 
				\partial_t(\tilde{\sigma}(\tau)u_2)\ge d_2\Delta (\tilde{\sigma}(\tau)u_2)+\tilde{h}_2(t,\cdot)(\tilde{\sigma}(\tau)u_1)-(e+f\tilde{u}_2)(\tilde{\sigma}(\tau)u_2)   & x\in\Omega,\ t> \tau+t_0,\cr 
				0=\partial_{\vec{n}}(\tilde{\sigma}(\tau)u_1)=\partial_{\vec{n}}(\tilde{\sigma}(\tau)u_2) & x\in\partial\Omega,\ t>\tau+t_0,\cr 
				\tilde{\sigma}(\tau){\bf u}(\cdot,\cdot;{\bf u}_0,t_0)\ge \tilde{\bf u} & x\in\bar{\Omega},\ t=\tau+t_0,
			\end{cases}
		\end{equation}
		where $\tilde{h}_i\ge 0$, $i=1,2$, is as in \eqref{R-eq5}.
		Hence,  since $\tilde{\bf u}(t,\cdot)$ is a positive entire solution of \eqref{Eq1}, it follows from the comparison principle for cooperative systems that $\tilde{\bf u}(t,\cdot)\le \tilde{\sigma}(\tau){\bf u}(t,\cdot;{\bf u}_0,t_0)$ for any $t\ge t_0+\tau$. Since $t_0$ is arbitrary chosen, we deduce that $\tilde{\sigma}(t)\le \tilde{\sigma}(\tau)$ for every $t\ge \tau\ge 0$. Therefore, 
		\begin{equation}
			\tilde{\sigma}^{\infty}:=\lim_{t\to\infty}\tilde{\sigma}(t)=\inf_{t\ge 0}\tilde{\sigma}(t).
		\end{equation}
		It is clear from the definition of $\tilde{\sigma}(t)$ that $\tilde{\sigma}^{\infty}\ge 1$. Next, we claim that 
		\begin{equation}\label{R-eq4}
			\tilde{\sigma}^{\infty}=1.
		\end{equation}
		Suppose to the contrary that \eqref{R-eq4} is false, that is $\tilde{\sigma}^{\infty}>1$.  Since ${\bf u}_0>{\bf 0}$, by \eqref{TH2-eq1}, there is $\eta^*>0$ such that 
		\begin{equation}\label{R-eq6}
			\eta^*{\bf 1}\le {\bf u}(t+t_0,\cdot;{\bf u}_0,t_0)\quad \forall\ t\ge T,\  t_0\in\mathbb{R}.
		\end{equation}
		Next, choose $0<\delta_*\ll 1$ such that 
		\begin{equation}\label{R-eq7}
			\tilde{\sigma}^{\infty}>e^{\delta_*} \quad \text{and}\quad \min\{b_{\min},f_{\min}\}(\tilde{\sigma}^{\infty}-e^{\delta_*})\eta^*>\delta_*e^{\delta_*}.
		\end{equation}
		Next, fix $\tau\ge T$ and $t_0\in\mathbb{R}$ and define 
		$$
		{\bf v}(t,\cdot)=\tilde{\sigma}(\tau)e^{-\delta_*t}{\bf u}(t+t_0+\tau,\cdot;{\bf u}_0,t_0)\quad 0\le t\le 1. 
		$$
		Recall from {\rm (i)} that $r-c{u}_1\ge r-c\tilde{u}_1=\tilde{h}_{1}$. Hence, thanks to \eqref{R-eq6} and \eqref{R-eq7},  we have 
		\begin{align}\label{PP3}
			\partial_tv_1=&d_1\Delta v_1+(r-cu_1)v_2-(a+s+b u_1(t+\tau+t_0,\cdot))v_1-\delta_*v_1\cr 
			\ge & d_1\Delta v_1+\tilde{h}_1(t,x) v_2-(a+s+b v_1+b(u_1(t+\tau+t_0,\cdot)-v_1))v_1-\delta_*v_1\cr 
			=& d_1\Delta v_1+\tilde{h}_1 v_2-(a+s+b v_1)v_1+(b(v_1-u_1(t+\tau+t_0,\cdot))-\delta_*)v_1\cr 
			=& d_1\Delta v_1+\tilde{h}_1 v_2-(a+s+b v_1)v_1+(b(\tilde{\sigma}(\tau)-e^{\delta_*t})u_1(t+\tau+t_0,\cdot)-\delta_*e^{-\delta_*t})v_1e^{-\delta_*t}\cr 
			\ge & d_1\Delta v_1+\tilde{h}_1 v_2-(a+s+b v_1)v_1+(b(\tilde{\sigma}_{\infty}-e^{\delta_*t})u_1(t+\tau+t_0,\cdot)-\delta_*e^{\delta_*t})e^{-\delta_*t}v_1\cr 
			\ge& d_1\Delta v_1+\tilde{h}_1 v_2-(a+s+b v_1)v_1+(b_{\min}(\tilde{\sigma}_{\infty}-e^{\delta_*t})\eta^*-\delta_*e^{\delta_*t})v_1e^{-\delta_*t}\cr
			\ge& d_1\Delta v_1+\tilde{h}_1 v_2-(a+s+b v_1)v_1\quad 0<t\le 1,\quad x\in\Omega,
		\end{align}
		where  the functions $a,b,s,r,e$ and $f$ are evaluated at $t+\tau+t_0$.
		Similarly, we have 
		\begin{equation}\label{PP4}
			\partial_tv_2\ge d_2\Delta v_2+\tilde{h}_2v_1-(e+fv_2)v_2 \quad 0<t\le 1,\ x\in\Omega.
		\end{equation}
		Observing that $\partial_{\vec{n}}v_i=0$ on $(0,1]\times\partial\Omega$ and 
		$$ 
		{\bf v}(0,\cdot)=\tilde{\sigma}(\tau){\bf u}(\tau+t_0,\cdot;{\bf u}_0,t_0)\ge \tilde{\bf u}(\tau+t_0,\cdot),
		$$
		we can employ the comparison principle for cooperative systems to conclude that 
		\begin{equation*}
			{\bf v}(t,\cdot)\ge \tilde{\bf u}(t+\tau+t_0,\cdot)\quad \forall 0\le t\le 1.
		\end{equation*}
		Taking $t=1$ in the last inequality, we get
		\begin{equation}\label{R-eq8}
			\tilde{\sigma}(\tau)e^{-\delta_*}{\bf u}(1+\tau+t_0,\cdot;{\bf u}_0,t_0)\ge \tilde{\bf u}(\tau+1+t_0,\cdot)\quad \forall\ \tau\ge T, \ t_0\in\mathbb{R}.
		\end{equation}
		By \eqref{R-eq8} and the definition of $\tilde{\sigma}(t)$, we have
		$$
		\tilde{\sigma}(t+1)\le\tilde{\sigma}(\tau)e^{-\delta_*}\quad \forall\ \tau\ge T.
		$$
		Letting $\tau\to\infty$ leads to $\tilde{\sigma}^{\infty}\le \tilde{\sigma}^{\infty}e^{-\delta_*}$, which is impossible since $\delta_*>0$. Therefore, we must have that $\tilde{\sigma}^{\infty}=1$, that is \eqref{R-eq4} holds. 
		
		\medskip
		
		Finally, by \eqref{R-eq4} and the fact that ${\bf u}(t+t_0,\cdot;{\bf u}_0,t_0)\le \tilde{u}(t+t_0,\cdot;{\bf u}_0,t_0)\le \tilde{\sigma}(t){\bf u}(t+t_0,\cdot;{\bf u}_0,t_0)$ for all $t\ge 0$ and $t_0\in\mathbb{R}$, we conclude that $\|{\bf u}(t+t_0,\cdot;{\bf u}_0,t_0)-\tilde{\bf u}(t_0+t,\cdot)\|\to 0$ as $t\to\infty$,   uniformly in $t_0\in\mathbb{R}$.
	\end{proof}
	
	Thanks to Lemma \ref{Lem-p-1}, we can give a proof of Theorem \ref{TH4}. 
	
	\begin{proof}[Proof of Theorem \ref{TH4}.] Assume that the hypothesis of the theorem holds. Let ${\bf u}_0\in [C^{++}(\bar{\Omega})]^2$. Next, set $\underline{\bf u}_0=(\frac{\min\{\tilde{u}_{1,\min},u_{0,1}\}}{2},\frac{\min\{\tilde{u}_{2,\min},u_{0,2}\}}{2})$.  Then by Lemma \ref{Lem-p-1}, setting $\underline{\bf u}(t,\cdot;t_0)={\bf u}(t,\cdot;\underline{\bf u}_0,t_0)$ for  $t\ge t_0\in\mathbb{R}$, it holds that 
		\begin{equation}\label{PP7}
			\underline{\bf u}(t,\cdot;t_0)\in\tilde{\mathbb{S}}_{t}, \ \forall\ t\ge t_0\in\mathbb{R} \quad\text{and}\quad \lim_{t\to\infty}\sup_{t_0\in \mathbb{R}}\|\underline{\bf u}(t+t_0,\cdot;t_0)-\tilde{\bf u}(t_0+t,\cdot)\|=0.
		\end{equation}
		From this point, the proof is divided into two steps. 
		
		\noindent{\bf Step 1.} Here we show that \begin{equation}\label{PP8}
			\underline{\bf u}(t,\cdot;t_0)\le {\bf u}(t,\cdot;{\bf u}_0,t_0)\quad\ t\ge t_0.
		\end{equation} 
		Fix $t_0\in\mathbb{R}$ and set ${\bf w}(t,\cdot;t_0)={\bf u}(t,\cdot;{\bf u}_0,t_0)-\underline{\bf u}(t,\cdot;t_0)$, $t\ge t_0$. Then
		\begin{align}\label{BB1-2}
			\partial_tw_1=&d_1\Delta w_1+(r-cu_{1})u_{2}-(r-c\underline{u}_{1})\underline{u}_{2}- (a+s+b(u_{1}+\underline{u}_1))w_1 & t>t_0,\cr 
			=& d_2\Delta w_1+(r-c\underline{u}_{1})w_2-(a+s+b(u_{1}+\underline{u}_{1})+cu_{2})w_1 & t>t_0.
		\end{align}
		Similarly, 
		\begin{equation}\label{BB2-2}
			\partial_t w_2=d_2\Delta w_2 +(s-g\underline{u}_{2})w_1 -(e+f(u_{2}+\underline{u}_{2})+gu_{1})w_2\quad t>t_0.
		\end{equation}
		Therefore,   setting 
		$$
		h_{1,2}=r-c\underline{u}_{1}, \quad h_{1,1}=a+s+b(u_{1}+\underline{u}_{1})+cu_{2},
		$$
		$$
		h_{2,1}=s-g\underline{u}, \quad \text{and}\quad h_{2,2}=e+f(u_{2}+\underline{u}_{2})+gu_{1},
		$$
		then,  by the results of Lemma \ref{Lem-p-1},  $h_{i,j}>0$ for $i,j=1,2$. Thus, by \eqref{BB1-2} and \eqref{BB2-2}, ${\bf w}(t,x)$ solves the cooperative system
		\begin{equation}\label{BB4-2}
			\begin{cases}
				\partial_tw_1=d_1\Delta w_1 -h_{1,1}w_1 +h_{1,2}w_2 & x\in\Omega,\ t>t_0,\cr 
				\partial_tw_2=d_2\Delta w_2+h_{1,2}w_1 -h_{2,2}w_2 & x\in\Omega,\ t>t_0,\cr 
				0=\partial_{\vec{n}}w_1=\partial_{\vec{n}}w_2 & x\in\partial\Omega,\ t>t_0.
			\end{cases}
		\end{equation}
		Since $w_1(t_0,\cdot;t_0)>0$ and $w_2(t_0,\cdot;t_0)>0$, it follows from the comparison principle for cooperative systems that $w_{1}(t+t_0,x;t_0)>0$ and $w_{2}(t+t_0,x;t_0)>0$ for $t>0$ and $x\in\mathbb{R}^n$, which yields \eqref{PP8}.
		
		\noindent{\bf Step 2.} We complete the proof here by establishing that 
		\begin{equation}\label{PP9}
			\lim_{t\to\infty}\sup_{t_0\in\mathbb{R}}\|{\bf u}(t+t_0,\cdot;{\bf u}_0,t_0)-\tilde{\bf u}(t_0+t,\cdot)\|=0.
		\end{equation}
		Thanks to \eqref{PP7}, to obtain that \eqref{PP9} holds, it is enough to establish that 
		\begin{equation}\label{PP10}
			\lim_{t\to\infty}\sup_{t_0\in\mathbb{R}}\|{\bf w}({t_0}+t,\cdot;t_0)\|=0
		\end{equation}
		where ${\bf w}(t+t_0,\cdot;t_0)$ $t\ge 0$, $t_0\in\mathbb{R}$ is as in Step 1. We proceed by contradiction to prove the validity of \eqref{PP10}.  Hence, we suppose that there is a sequence $\{t_n\}_{n\ge 1}$ of positive numbers converging to infinity and a sequence of initial times $\{t_{0,n}\}_{n\ge 1}$ such that 
		\begin{equation}\label{PP11}
			\inf_{n\ge 1}\|{\bf w}(t_{0,n}+t_n,\cdot;t_{0,n})\|>0.
		\end{equation}
		Consider the sequence of  ${\bf w}^n(t,\cdot):={\bf w}(t_{0,n}+t_n+t,\cdot;t_{0,n})$, $t>-t_n$, $n\ge 1$. Thanks to the regularity theory for parabolic equations and the fact that solutions of \eqref{Eq1} are eventually bounded, then without loss of generality, we may suppose that ${\bf w}^n(t,\cdot)\to {\bf w}^{\infty}(t,\cdot)$ and ${\bf u}(t+t_n+t_{0,n},\cdot;t_{0,n})\to {\bf u}^{\infty}(t,\cdot)$ as $n\to\infty$ locally in  $C^{1,2}(\mathbb{R}\times\bar{\Omega})$. Furthermore, by {\bf (H1)}, we may also suppose that $\tau(t+t_n+t_{0,n},\cdot)\to\tau^{\infty}(t,\cdot)$ (a $T$-periodic function) as $n\to\infty$  locally uniformly in $\mathbb{R}\times\bar{\Omega}$ for each $\tau\in\{a,b,c,r,s,e,f,g,\tilde{u}_1,\tilde{u}_2\}$, and  thanks to \eqref{BB4-2}$, {\bf w}^{\infty}$ satisfies  
		\begin{equation}\label{BB4-3}
			\begin{cases}
				\partial_t{w}_1^{\infty}=d_1\Delta {w}_1^{\infty} -h_{1,1}^{\infty}{w}_1^{\infty} +h_{1,2}^{\infty}{w}_2^{\infty} & x\in\Omega,\ t\in \mathbb{R},\cr 
				\partial_tw_2^{\infty}=d_2\Delta w_2^{\infty}+h_{1,2}^{\infty}w_1^{\infty} -h_{2,2}^{\infty}w_2^{\infty} & x\in\Omega,\ t\in\mathbb{R},\cr 
				0=\partial_{\vec{n}}w_1^{\infty}=\partial_{\vec{n}}w_2^{\infty} & x\in \partial \Omega,\ t\in\mathbb{R},
			\end{cases}
		\end{equation}
		where $h_{1,2}^{\infty}=r^{\infty}-c\tilde{u}_1^{\infty}$, $h_{1,1}^{\infty}=a^{\infty}+s^{\infty}+b^{\infty}(u_1^{\infty}+\tilde{u}^{\infty}_1)+c^{\infty}u^{\infty}_2$, $h_{2,1}^{\infty}=s^{\infty}-g^{\infty}\tilde{u}_1^{\infty}$, and $h_{2,2}^{\infty}=e^{\infty}+f^{\infty}(u_2^{\infty}+\tilde{u}^{\infty}_2)+g^{\infty}u^{\infty}_1$. Clearly  $h_{1,2}^{\infty}$ and $h_{2,2}^{\infty}>0$. It also follows from \eqref{PP7} and Step 1 that $h_{1,1}^{\infty}\ge 0$, $h_{2,1}^{\infty}\ge 0$, $u_1^{\infty}\ge \tilde{u}^{\infty}_1$ and $u^{\infty}_2\ge \tilde{u}^{\infty}_2$. Therefore, taking $\sigma_*=\min\{b_{\min}\tilde{u}_{1,\min},f_{\min}\tilde{u}_{2,\min}\}>0$, we deduce from \eqref{BB4-3} that 
		\begin{equation}\label{BB4-4}
			\begin{cases}
				\sigma_*w_1^{\infty}+  \partial_t{w}_1^{\infty}\le d_1\Delta {w}_1^{\infty} -\underbrace{(a^{\infty}+s^{\infty}+b^{\infty}\tilde{u}_1^{\infty})}_{\tilde{h}_{1,1}^{\infty}}{w}_1^{\infty} +h_{1,2}^{\infty}{w}_2^{\infty} & x\in\Omega,\ t\in \mathbb{R},\cr 
				\sigma_*w_2^{\infty}+   \partial_tw_2^{\infty}\le d_2\Delta w_2^{\infty} -\underbrace{(e^{\infty}+f^{\infty}\tilde{u}_2^{\infty})}_{\tilde{h}_{2,2}^{\infty}}w_2^{\infty} +h_{1,2}^{\infty}w_1^{\infty}& x\in\Omega,\ t\in\mathbb{R},\cr 
				0=\partial_{\vec{n}}w_1^{\infty}=\partial_{\vec{n}}w_2^{\infty} & x\in \partial\Omega,\ t\in\mathbb{R}.
			\end{cases}
		\end{equation}
		Now, note that since $\tilde{\bf u}(t,\cdot)$ is a $T$-periodic strictly positive entire solution of \eqref{Eq1} and $\tilde{\bf u}(t_{0,n}+t_n+t,\cdot)\to \tilde{\bf u}^{\infty}(t,\cdot)$ as $n\to\infty$ locally uniformly in $\mathbb{R}\times{\bar{\Omega}}$, then $\tilde{\bf u}^{\infty}$ is a strictly positive entire solution of 
		\begin{equation}\label{BB4-5}
			\begin{cases}  \partial_t\tilde{u}^{\infty}_1=  d_1\Delta \tilde{u}^{\infty}_1 -\tilde{h}_{1,1}^{\infty}\tilde{u}^{\infty}_1+h_{1,2}^{\infty}\tilde{u}^{\infty}_2 & x\in\Omega,\ t\in \mathbb{R},\cr 
				\partial_t\tilde{u}^{\infty}_2= d_2\Delta \tilde{u}_2^{\infty} -\tilde{h}_{2,2}^{\infty}u_2^{\infty} +h_{1,2}^{\infty}\tilde{u}^{\infty}_1& x\in\Omega,\ t\in\mathbb{R},\cr      0=\partial_{\vec{n}}\tilde{u}^{\infty}_1=\partial_{\vec{n}}\tilde{u}^{\infty}_2 & x\in \partial\Omega,\ t\in\mathbb{R}.
			\end{cases}
		\end{equation}
		Recall that $M^*:=\sup_{t\in\mathbb{R}}\|{\bf w}(t,\cdot)\|\le \sup_{t\ge t_0}\|{\bf u}(t,\cdot;t_0)\|<\infty$ (by Lemma \ref{Lem2-5}). Setting  
		$$\underline{\bf v}(t,\cdot,t_0):=e^{\sigma_*(t-t_0)}{\bf w}^{\infty}(t,\cdot)\quad \text{and}\quad  \overline{\bf v}(t,\cdot;t_0)=\frac{M^*}{\min_{i=1,2}\{\tilde{u}_{i,\min}\}}\tilde{\bf u}^{\infty}(t,\cdot),\quad t\ge t_0,\ t,, t_0\in\mathbb{R},$$
		we have that $\underline{\bf v}(t_0,\cdot,t_0)\le \overline{\bf v}(t_0,\cdot;t_0)$ for all $t_0\in\mathbb{R}$. Observe $\overline{\bf v}$  solves the linear cooperative system \eqref{BB4-5}, while thanks to \eqref{BB4-4}, $\underline{\bf v}(t,\cdot;t_0)$ is a subsolution of \eqref{BB4-5}. Therefore, by the comparison principle for cooperative systems, we have that $\underline{\bf v}(t,\cdot;t_0)\le \overline{\bf v }(t,\cdot;t_0)$ for all $t\ge t_0$. Hence
		$$
		\|{\bf w}^{\infty}(t,\cdot)\|\le \frac{M^*e^{-\sigma_*(t-t_0)}}{\min_{i=1,2}\tilde{u}_{i,\min}}\|\tilde{\bf u}^{\infty}(t,\cdot)\|\le \frac{M^*\max_{i=1,2}\tilde{u}_{i,\max}}{\min_{i=1,2}\tilde{u}_{i,\min}}e^{-\sigma_{*}(t-t_0)}\quad \forall\ t\ge t_0.
		$$
		Letting $t_0\to-\infty$ in the last inequality gives $\|{\bf w}^{\infty}(t,\cdot)\|=0$ for all $t\in\mathbb{R}$, which implies that $\|{\bf w}(t_{0,n}+t_n,\cdot;t_{0,n})\|\to 0$ as $n\to\infty$. This clearly contradicts with \eqref{PP11}. Therefore, \eqref{PP10} holds, which completes the proof of the theorem.  
	\end{proof}
	
	\begin{proof}[Proof of Proposition \ref{Cor2}]  Let $(\tilde{r}(t,\cdot),\tilde{s}(t,\cdot))$ be given as in Corollary \eqref{Cor2}. We proceed in two steps.
		
		{\bf Step 1}.  Fix ${\bf u}_0\in [C^+(\bar{\Omega})]^2$ and $t_0\in\mathbb{R}$ such that ${\bf u}_0\le (\tilde{r}(t_0,\cdot),\tilde{s}(t_0,\cdot))$. We claim that 
		\begin{equation}\label{R-eq9}
			{\bf u}(t,\cdot;{\bf u}_0,t_0)\le(\tilde{r}(t,\cdot),\tilde{s}(t,\cdot))\quad \forall\ t\ge t_0.
		\end{equation}
		To this end, observe that $u_1(t,\cdot):=u_1(t,\cdot;{\bf u}_0,t_0)$ satisfies 
		$$
		\partial_tu_1-d_1\Delta u_1-c(t,x)(\tilde{r}-u_1)u_2+(a+s+bu_1)u_1=0 \quad x\in\Omega,\ t>t_0; \quad \partial_{\vec{n}}u_1=0\quad x\in\partial\Omega,\ t>t_0,
		$$
		and by \eqref{Rk1-eq1}, $\tilde{r}$ satisfies 
		\begin{align*}
			\partial_t\tilde{r}-d_1\Delta\tilde{r}- c(t,x)(\tilde{r}-\tilde{r})u_2+(a+s+b\tilde{r})\tilde{r}\ge 0\quad x\in\Omega,\ t\in\mathbb{R}; \quad \partial_{\vec{n}}\tilde{r}\ge 0\quad x\in\partial\Omega,\ t\in\mathbb{R}.  
		\end{align*}
		Hence, by the comparison principle for  parabolic equations, we conclude that $u_1(t,\cdot)\le \tilde{r}(t,\cdot)$ for all $t\ge t_0$. Similarly, $u_2(t,\cdot):=u_2(t,\cdot;{\bf u}_0,t_0)$ satisfies 
		$$
		\partial_tu_2-d_2\Delta u_2-g(t,x)(\tilde{s}-u_2)u_1+(e+fu_2)u_2=0 \quad x\in\Omega,\ t>t_0; \quad \partial_{\vec{n}}u_1=0\quad x\in\partial\Omega,\ t>t_0,
		$$
		and by \eqref{Rk1-eq1}, $\tilde{s}$ satisfies 
		\begin{align*}
			\partial_t\tilde{s}-d_2\Delta\tilde{s}- g(t,x)(\tilde{s}-\tilde{s})u_1+(e+f\tilde{s})\tilde{s}\ge 0\quad x\in\Omega,\ t\in\mathbb{R}; \quad \partial_{\vec{n}}\tilde{s}\ge 0\quad x\in\partial\Omega,\ t\in\mathbb{R}.  
		\end{align*}
		Hence, by the comparison principle for parabolic equations, we conclude that $u_2(t,\cdot)\le \tilde{s}(t,\cdot)$ for all $t\ge t_0$. Therefore, \eqref{R-eq9} holds.
		
		\medskip

		{\bf Step 2.} Suppose that $\lambda_*>0$. We show that \eqref{Eq1} has a strictly positive $T$-periodic solution $\tilde{\bf u}(t,\cdot)$ satisfying 
		\begin{equation}\label{R-eq10}
			\tilde{\bf u}(t,\cdot)\le (\tilde{r}(t,\cdot),\tilde{s}(t,\cdot))\quad t\in\mathbb{R}.
		\end{equation}
		Fix $0<\alpha<1$ and $\gamma^*$ as in Lemma \ref{Lem2-5}, and $K=K_{T,\alpha,\gamma^*}$ as in Lemma \ref{Lem2-6}. Next, let $\xi^*>0$ be the positive number in Lemma \ref{Lem2-7} that satisfies
		\begin{equation}\label{R-eq11}
			{\bf u}(t_0+T,\cdot;{\bf u}_0,t_0)\in\mathcal{M}^{t_0,\alpha}_{\gamma^*,\xi}\quad \text{whenever}\ {\bf u}_0\in\mathcal{M}^{t_0,\alpha}_{\gamma^*,\xi}\quad \text{and}\ 0<\xi<\xi^*, \ t_0\in\mathbb{R},
		\end{equation}
		where $\mathcal{M}^{t_0,\alpha}_{\gamma^*,\xi}$ is defined by \eqref{RE:18}. Next, chose $0<\tilde{\xi}^*\ll \min\{\xi^*,1\}$ such that 
		$$
		\tilde{\xi}^*\varphi(0,\cdot)<(\tilde{r}(0),\tilde{s}(0,\cdot)).
		$$
		Thanks to Step 1 and the fact that $(\tilde{r}(T,\cdot),\tilde{s}(T,\cdot))=(\tilde{r}(0,\cdot),\tilde{s}(0,\cdot))$, we have that ${\bf u}(T+t_0,\cdot,\cdot;{\bf u}_0,t_0)\in [{\bf 0}, (\tilde{r}(0,\cdot),\tilde{s}(0,\cdot))]$ whenever ${\bf 0}\le {\bf u}_0\le (\tilde{r}(0,\cdot),\tilde{s}(0,\cdot))$.  This along with \eqref{R-eq11} shows that, 
		\begin{equation}
			{\bf u}(T,\cdot;{\bf u}_0,0)\in\mathcal{M}^*:=[{\bf 0},(\tilde{r}(0),\tilde{s}(0))]\cap\mathcal{M}^{0,\alpha}_{\gamma^*,\xi^*}\quad \text{whenever}\quad {\bf u}_0\in\mathcal{M}^*.
		\end{equation} 
		
		Since  $\mathcal{M}^*$ is a closed, convex and bounded subset of $[X^{\alpha}]^2$, and by the arguments in proof of Theorem \ref{TH3}-{\rm (i)}, the Poincar\'e map $\mathcal{M}^*\ni {\bf u}_0\mapsto {\bf u}(T,\cdot;{\bf u}_0,0)\in\mathcal{M}^*$ is compact, then by the Schauder fixed point theorem, there is $\tilde{\bf u}_0\in\mathcal{M}^*$ such that ${\bf u}(T,\cdot;\tilde{\bf u}_0,0)=\tilde{\bf u}_0$. Therefore, $\tilde{\bf u}(t,x)={\bf u}(t,\cdot;\tilde{\bf u}_0,0)$ is a strictly positive and bounded $T$-periodic solution of \eqref{Eq1}. Furthermore, since $\tilde{\bf u}_0\le (\tilde{r}(0,\cdot),\tilde{s}(0,\cdot))$, we deduce from step 1 that $\tilde{\bf u}(t,\cdot)\le (\tilde{r}(t,\cdot),\tilde{s}(t,\cdot))$ for all $t\ge 0$. This along with the periodicity of the maps $\tilde{\bf u}(t,\cdot)$ and $(\tilde{r}(t,\cdot),\tilde{s}(t,\cdot))$ implies that \eqref{R-eq10} holds.  Consequently, $\tilde{\bf u}(t,\cdot)$ satisfies the requirement of hypothesis {\bf (H2)} since $\tilde{r}=\frac{r}{c}$ and $\tilde{s}=\frac{s}{g}$. 
	\end{proof}

	We complete this subsection with a proof of Corollary \ref{Cor1}. 
	
	\begin{proof}[Proof of Corollary \ref{Cor1}] Suppose that {\bf (H1)} holds, $\lambda_*>0$, and $\tilde{c}:=\frac{c}{r}$ and $\tilde{g}=\frac{g}{s}$ are constant functions. If $\tilde{c}=\tilde{g}=0$, then any positive $T$-periodic positive solution of \eqref{Eq1} satisfies {\bf (H2)}, and hence the result follows from Theorem \ref{TH4}. If $\tilde{c}>0$ and $\tilde{g}>0$, then taking $\tilde{r}=\frac{1}{\tilde{c}}$ and $\tilde{s}=\frac{1}{\tilde{g}}$, we have that \eqref{Rk1-eq1} of Proposition \ref{Cor2} holds, and hence the result follows again.  Hence, it remains the case of $\tilde{c}\tilde{g}=0$ and $\tilde{c}+\tilde{g}>0$.  Without loss of generality, with suppose that $\tilde{c}=0$ and $\tilde{g}>0$. By Theorem \ref{TH3}, we know that \eqref{Eq1} has at least one $T$-periodic positive solution $\tilde{\bf u}$. Note that $\tilde{u}_2$ satisfies
		$$
		\begin{cases}
			\partial_t\tilde{u}_2\le d_2\Delta \tilde{u}_2+s(1-\tilde{g}\tilde{u}_2)\tilde{u}_1 & x\in\Omega,\ t\in \mathbb{R},\cr
			0=\partial_{\vec{n}}\tilde{u}_2 & x\in\partial\Omega,\ t\in\mathbb{R}.
		\end{cases}
		$$
		Hence, since $\tilde{u}_1,\tilde{u}_2\in \mathcal{X}_T^{++}$, we can employ standard stability results on the single species Logistic-type equations and the comparison principle for parabolic equations to deduce that $\tilde{g}\tilde{u}_2\le 1$, which implies that $s-g\tilde{u}_2=s(1-\tilde{g}\tilde{u}_2)\ge 0$. It is also clear that $r-c\tilde{u}_1=r(1-\tilde{c}\tilde{u}_1)\ge 0$. Hence $\tilde{\bf u}$ satisfies {\bf (H2)}, and hence the result follows from Theorem \ref{TH4}. 
	\end{proof}
	
	\section{Proof of  Theorem \ref{TH5}}
	
	Throughout the rest of this paper, we shall always suppose that {\bf (H1)} and ${\bf (H3)}$ hold. We establish some lemmas. For every $\varepsilon\ge 0$, define
	\begin{equation}\label{G-funct-def}
		G^{\varepsilon}(x,\tau)=\sqrt{(a(x)+s(x)+\tau c(x))^2+4\tau b(x)(\varepsilon+r(x))}+(a(x)+s(x)+\tau c(x))\quad x\in\bar{\Omega},\ \tau\ge0,
	\end{equation}
	and \begin{equation}\label{w-1-def}
		w_1^{\varepsilon}(x;\tau)=\frac{2(\varepsilon+r(x))\tau}{G^{\varepsilon}(x,\tau)}\quad \forall\ x\in\bar{\Omega},\,\ \tau\ge 0.
	\end{equation}
	
	\begin{lem}\label{l1}
		For every $\varepsilon\ge 0$, let  $G^{\varepsilon}$ and $w_1^{\varepsilon}$ be defined as in \eqref{G-funct-def} and \eqref{w-1-def}, respectively.
		\begin{itemize}
			\item[\rm (i)] $G^{\varepsilon}$ is H\"older continuous, and for each $x\in\bar{\Omega}$, $G^{\varepsilon}(x,\tau)$ is strictly increasing in $\tau\ge 0$. Moreover, for every $\tau\ge0 $, it holds that  $\min_{y\in\bar{\Omega}}G^{\varepsilon}(y,\tau)\ge 2(a+s)_{\min}>0$. 
			\item[\rm (ii)] $w^{\varepsilon}_1$ is H\"older continuous, and for each $x\in\bar{\Omega}$, $w^{\varepsilon}_1(x;\tau)$ is strictly increasing in $\tau\ge 0$. Moreover
			\begin{equation}\label{l1-eq1}
				-\varepsilon \tau=(r(x)-c(x)w_1^{\varepsilon}(x;\tau))\tau-(a(x)+s(x)+b(x)w_1^{\varepsilon}(x;\tau))w_1^{\varepsilon}(x;\tau)\quad \forall\ x\in\bar{\Omega},\ \tau\ge 0,
			\end{equation}
			
			\item[\rm (iii)] For every $\tau_2>0$, there is $m_{\tau_2}>0$ and $\varepsilon_{\tau_2}>0$ such that
			\begin{equation}\label{l1-eq2}
				(r-cw_1^{\varepsilon}(\cdot;\tau))_{\min}\ge m_{\tau_2}\quad \forall\ 0\le \varepsilon\le \varepsilon_{\tau_2}
				\ \text{and}\ 0\le \tau\le \tau_2.        \end{equation}
		\end{itemize}
	\end{lem}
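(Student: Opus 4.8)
The plan is to dispatch the three assertions in order, with the algebraic identity furnished by part (ii) serving as the engine for part (iii); throughout, the standing positivity $b_{\min},r_{\min},s_{\min}>0$ (from {\bf (H1)}) and the H\"older regularity of the parameters on $\bar\Omega$ (from {\bf (H3)}) are what make everything work.

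For (i), I would observe that $G^\varepsilon$ is obtained from $a,b,c,r,s$ by sums, products and one square root; since $a+s\ge s_{\min}>0$, the radicand $(a+s+\tau c)^2+4\tau b(\varepsilon+r)$ is bounded below by $(a+s)_{\min}^2>0$, where $\sqrt{\cdot}$ is a Lipschitz map, so H\"older continuity in $x$ (locally uniformly in $\tau$) is preserved. Strict monotonicity in $\tau$ follows by writing $G^\varepsilon=\sqrt{P_\varepsilon(\tau)}+Q(\tau)$ with $Q(\tau)=a+s+\tau c$ nondecreasing and $P_\varepsilon'(\tau)=2cQ(\tau)+4b(\varepsilon+r)\ge 4b_{\min}r_{\min}>0$, so $\partial_\tau G^\varepsilon>0$. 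The lower bound is immediate: $G^\varepsilon(x,\tau)\ge\sqrt{(a+s+\tau c)^2}+(a+s+\tau c)=2(a+s+\tau c)\ge 2(a+s)_{\min}$.

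For (ii), the key point is that $w_1^\varepsilon(x;\tau)$ is precisely the unique nonnegative root of $b\xi^2+(a+s+c\tau)\xi-(\varepsilon+r)\tau=0$: rationalizing the positive root $\xi_+=\frac{-(a+s+c\tau)+\sqrt{(a+s+c\tau)^2+4b(\varepsilon+r)\tau}}{2b}$ by its conjugate yields exactly $\frac{2(\varepsilon+r)\tau}{G^\varepsilon(x,\tau)}$, and rewriting that quadratic is identity \eqref{l1-eq1}. H\"older continuity of $w_1^\varepsilon$ then follows from that of the numerator and of $G^\varepsilon$ together with $G^\varepsilon\ge 2(a+s)_{\min}>0$. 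For strict monotonicity in $\tau$, I would, for $\tau>0$, divide numerator and denominator by $\tau$ to get $w_1^\varepsilon(x;\tau)=2(\varepsilon+r)\big/\big(\sqrt{(\tfrac{a+s}{\tau}+c)^2+\tfrac{4b(\varepsilon+r)}{\tau}}+\tfrac{a+s}{\tau}+c\big)$; since $\tfrac{a+s}{\tau}$ is strictly decreasing (as $a+s\ge s_{\min}>0$) and $\tfrac{4b(\varepsilon+r)}{\tau}$ is nonincreasing, the denominator strictly decreases, so $w_1^\varepsilon$ strictly increases on $(0,\infty)$; with $w_1^\varepsilon(x;0)=0<w_1^\varepsilon(x;\tau)$ for $\tau>0$ this gives strict monotonicity on $[0,\infty)$.

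For (iii), at $\tau=0$ we have $w_1^\varepsilon(\cdot;0)\equiv0$, so $r-cw_1^\varepsilon(\cdot;0)=r\ge r_{\min}>0$. For $0<\tau\le\tau_2$ I would use \eqref{l1-eq1}, which rearranges to $r-cw_1^\varepsilon=\frac{(a+s+bw_1^\varepsilon)w_1^\varepsilon}{\tau}-\varepsilon\ge(a+s)_{\min}\,\frac{w_1^\varepsilon}{\tau}-\varepsilon$. Here $\frac{w_1^\varepsilon}{\tau}=\frac{2(\varepsilon+r)}{G^\varepsilon}\ge\frac{2r_{\min}}{G^\varepsilon}$, and by $\sqrt{X+Y}\le\sqrt{X}+\sqrt{Y}$, for $0\le\varepsilon\le1$ and $0\le\tau\le\tau_2$ one gets $G^\varepsilon\le 2\big((a+s)_{\max}+\tau_2 c_{\max}\big)+2\sqrt{\tau_2 b_{\max}(1+r_{\max})}=:C_{\tau_2}$. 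Hence $r-cw_1^\varepsilon\ge\frac{2(a+s)_{\min}r_{\min}}{C_{\tau_2}}-\varepsilon=:2\tilde m_{\tau_2}-\varepsilon$, and taking $\varepsilon_{\tau_2}:=\min\{1,\tilde m_{\tau_2}\}$, $m_{\tau_2}:=\min\{\tilde m_{\tau_2},r_{\min}\}$ yields \eqref{l1-eq2}. The H\"older bookkeeping and the two conjugate manipulations are routine; the one place needing care is (iii), where the estimate must be uniform over the whole strip $(\varepsilon,\tau)\in[0,\varepsilon_{\tau_2}]\times[0,\tau_2]$ and, in particular, the apparently indeterminate ratio $w_1^\varepsilon/\tau$ must be controlled as $\tau\downarrow0$ — exactly the situation where the identity from (ii) and the positivity of $b_{\min},r_{\min},s_{\min}$ are indispensable.
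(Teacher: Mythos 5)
Your proposal is correct and follows essentially the same path as the paper: identify $w_1^\varepsilon(x;\tau)$ as the nonnegative root of the quadratic $b\xi^2+(a+s+c\tau)\xi-(\varepsilon+r)\tau=0$ to get identity \eqref{l1-eq1}, then rearrange that identity and bound $G^\varepsilon$ from above uniformly on $[0,1]\times[0,\tau_2]$ to produce the uniform lower bound in \eqref{l1-eq2}. The one spot where you diverge is the strict monotonicity of $\tau\mapsto w_1^\varepsilon(x;\tau)$, where you divide numerator and denominator of \eqref{w-1-def} by $\tau$ and observe that the resulting denominator is strictly decreasing (using $a+s\ge s_{\min}>0$), while the paper differentiates $z^+=\frac{-A+\sqrt{A^2+2\tau B}}{2b}$ directly and verifies the sign of $\partial_\tau w_1^\varepsilon$ by squaring — your route is a bit cleaner but the two are equivalent in substance.
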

	\begin{proof}{\rm (i)} The regularity of $G^{\varepsilon}$ in $x$ and $\tau\ge 0$ easily follows from that of the functions $a, b, c, s$ and $r$. It is easy to see from the definition of $G^{\varepsilon}$ that it is strictly increasing in $\tau\ge$. Hence $G^{\varepsilon}(x,\tau)\ge G^{\varepsilon}(x,0)=2(a(x)+s(x))\ge 2(s+a)_{\min}$ for all $x\in\bar{\Omega}$.
		
		\medskip
		
		{\rm (ii)} The regularity of $w_1^{\varepsilon}$ in $x$ and $\tau$ follows from that of $G^{\varepsilon}$ and $r$. It is clear that $w_1^{\varepsilon}(x;\tau)\ge 0$ for all $x\in\bar{\Omega}$ and $\tau\ge 0$, with strict inequality if $\tau>0$. It is clear that \eqref{l1-eq1} holds if $\tau=0$. Now fix $\tau>0$ and $x\in\bar{\Omega}$. Hence $w_1^{\varepsilon}(x;\tau)>0$. Consider the quadratic equation 
		\begin{equation}\label{A1}
			0=(r(x)+\varepsilon)\tau -(a(x)+s(x)+\tau c(x))z -b(x)z^2,
		\end{equation}
		where the unknown is $z$. It follows from the quadratic formula that the unique positive solution of \eqref{A1} is 
		$$
		z^{+}=\frac{(a(x)+s(x)+\tau c(x))-\sqrt{(a(x)+s(x)+\tau c(x))^2+4\tau(\varepsilon+r(x))b(x)}}{-2b(x)}=\frac{2\tau(\varepsilon+r(x))}{G^{\varepsilon}(x,\tau)}.
		$$
		Noting from the formula of $w_1^{\varepsilon}(x;\tau)$ that $z^+=w_1^{\varepsilon}(x;\tau)$, then \eqref{l1-eq1} holds since $z^+$ solves \eqref{A1}.  Thanks to the formula of $z^+$, we have that 
		\begin{align*}
			\partial_{\tau}w_1^{\varepsilon}(x;\tau)=&\frac{c(x)(A(x,\tau)+B(x))-c(x)\sqrt{A^2(x,\tau)+2\tau B(x)}}{2b(x)\sqrt{A^2(x,\tau)+2\tau B(x)}},
		\end{align*}
		where 
		$$ 
		A(x):=a(x)+s(x)+\tau c(x)\quad \text{and}\quad B(x)=2(\varepsilon+r(x)b(x))).
		$$
		Observing that 
		\begin{align*}
			(c(x)A(x,\tau)+B(x))^2=&c^2(x)A^2(x,\tau)+2c(x)A(x,\tau)B(x)+B^2(x)\cr
			=& c^2(x)(A^2(x,\tau)+2\tau  B(x))+2c(x)(a(x)+s(x))B(x)+B^2(x)\cr 
			>&c^2(x)(A^2(x,\tau)+2\tau  B(x)),
		\end{align*}
		then $\partial_{\tau}w_1^{\varepsilon}(x;\tau)>0$ for $x\in\bar{\Omega}$ and $\tau\ge 0$.

		\medskip
		
		{\rm (iii)} When $\tau>0$, it follows from \eqref{l1-eq1} that 
		\begin{align}\label{A2}
			r(x)-c(x)w_1^{\varepsilon}(x;\tau)=&\frac{(a(x)+s(x)+b(x)w_1^{\varepsilon}(x;\tau))w_1^{\varepsilon}(x;\tau)}{\tau}-\varepsilon\cr
			=&\frac{2(a(x)+s(x)+b(x)w_1^{\varepsilon}(x;\tau))(\varepsilon+r(x))}{G^{\varepsilon}(x;\tau)}-\varepsilon
		\end{align}
		Note also that when $\tau=0$, we have $w_1^*(x,0)=0$, $G^{\varepsilon}(x,0)=2(a(x)+s(x))$, and 
		$$
		\frac{2(a(x)+s(x)+b(x)w_1^{\varepsilon}(x;\tau))(\varepsilon+r(x))}{G^{\varepsilon}(x;\tau)}-\varepsilon=r(x)=r(x)-c(x)w_1^{\varepsilon}(x;0).
		$$
		Therefore, \eqref{A2} also holds for $\tau=0$.  Hence, since $G^{\varepsilon}(\cdot;\tau_1)\le G^{1}(\cdot,\tau_1)\le G(\cdot,\tau_2)$, then by \eqref{A2}, for every $0<\tau_1<\tau_2>0$ and $0<\varepsilon<1$, it holds that
		$$
		r(x)-c(x)w_1^{\varepsilon}(x;\tau)\ge \frac{2(a+s)_{\min}(\varepsilon+r_{\min})}{\|G^{1}(\cdot;\tau_2)\|_{\infty}}-\varepsilon,\quad 0<\varepsilon<1, \quad 0\le \tau<\tau_2.
		$$
		Hence, for each fixed $\tau_2>0$, since as $\varepsilon\to0^+$, it holds that  $\frac{2(a+s)_{\min}(\varepsilon+r_{\min})}{\|G^{1}(\cdot;\tau_2)\|_{\infty}}-\varepsilon\to \frac{2(a+s)_{\min}(r_{\min})}{\|G^{1}(\cdot;\tau_2)\|_{\infty}}>0 $, there is $0<\varepsilon_{\tau_{2}}<1$  such that $(r-cw^{\varepsilon}_1(\cdot;\tau))_{\min}\ge \frac{2(a+s)_{\min}(r_{\min})}{2\|G^{1}(\cdot;\tau_2)\|_{\infty}}$ for all $0<\varepsilon<\varepsilon_{\tau_2}$ and $0\le \tau\le \tau_{2}$.
	\end{proof}
	Next, for each $\varepsilon\ge 0$, we introduce the function 
	\begin{equation}\label{h-def-eq}
		h^{\varepsilon}(x)=\frac{(\varepsilon+r(x))s(x)}{a(x)+s(x)}-e(x)\quad x\in\bar{\Omega}.
	\end{equation}
	and 
	\begin{equation}\label{tilde-F-funct-def}
		F^{\varepsilon}(x,\tau)=h_+^{\varepsilon}(x)+\varepsilon-\left(2(\varepsilon+r(x))s(x)\Big(\frac{1}{G^{\varepsilon}(x,0)} -\frac{1}{G^{\varepsilon}(x,\tau)}\Big)+\Big(g(x)w_1^{\varepsilon}(x;\tau)+\tau f(x)\Big)\right)\quad x\in\bar{\Omega},\ \tau\ge 0,
	\end{equation}
	where $h^{\varepsilon}_+=\max\{h^{\varepsilon},0\}$, and  $G^{\varepsilon}$ and $w_1^{\varepsilon}$ are defined by \eqref{G-funct-def} and \eqref{w-1-def}, respectively.

	\begin{lem}\label{l2} Set $\tau_2:=(2\|(2s+rs)/(s+a)\|_{\infty}+\|e\|_{\infty}+2)/f_{\min}$. For every $\varepsilon\ge 0$, let $F^{\varepsilon}$ be defined by \eqref{tilde-F-funct-def}.
		\begin{itemize}
			\item[\rm (i)]$F^{\varepsilon}$ is H\"older continuous, strictly decreasing in $\tau\ge 0$; $F^{\varepsilon}(x,0)\ge \varepsilon$ for all $x\in\bar{\Omega}$, and  $F^{\varepsilon}(x,\tau_{2})<0$ for all $x\in\bar{\Omega}$ and $0\le \varepsilon\le 1$.
			\item[\rm (ii)] For every $0< \varepsilon<1$ and $x\in\bar{\Omega}$, there is a unique positive number $0<w_{2}^{\varepsilon}(x)<$ such that $F^{\varepsilon}(x,w_2^{\varepsilon}(x))=0$. Moreover, for every $0<\varepsilon\le 1$, $w_2^{\varepsilon}$ is H\"older continuous in $x\in\bar{\Omega}$. Furthermore,  
			\begin{equation}\label{l2-eq1}
				-\varepsilon w_2^{\varepsilon}(x)\ge (s(x)-g(x)w_2^{\varepsilon}(x))w_1^{\varepsilon}(x;w_2^{\varepsilon}(x))-(e(x)+f(x)w_2^{\varepsilon}(x))w_2^{\varepsilon}(x)\quad 0<\varepsilon<1, \ x\in\bar{\Omega},
			\end{equation}
			where $w_1^{\varepsilon}$ is as in \eqref{w-1-def}.
			
			\item[\rm (iii)] If in addition  $e_{\min}>0$,   then there is $\tilde{\varepsilon}_{\tau_2}>0$  and $\tilde{m}_{\tau_2}>0$ such that 
			\begin{equation}\label{l2-eq2}
				(s-gw_2^{\varepsilon})_{\min}\ge \tilde{m}_{\tau_2} \quad \forall\ 0< \varepsilon<\tilde{\varepsilon}_{\tau_2}.
			\end{equation}
			
		\end{itemize}

	\end{lem}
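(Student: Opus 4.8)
The plan is to dispatch the three assertions in order, using only the closed-form expressions \eqref{G-funct-def}, \eqref{w-1-def}, \eqref{h-def-eq}, \eqref{tilde-F-funct-def} and the structural properties of $G^{\varepsilon}$ and $w_1^{\varepsilon}$ recorded in Lemma \ref{l1}.

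For (i): Hölder continuity of $F^{\varepsilon}$ in $x$ is inherited from that of $h^{\varepsilon}$ (a quotient of Hölder functions with denominator $a+s\ge (a+s)_{\min}>0$, post-composed with the Lipschitz map $t\mapsto\max\{t,0\}$), from that of $G^{\varepsilon}$ and $w_1^{\varepsilon}$ (Lemma \ref{l1}(i)--(ii)), and from the fact that Hölder functions on $\bar{\Omega}$ form an algebra, $G^{\varepsilon}$ being bounded below away from $0$. Writing $B^{\varepsilon}(x,\tau)$ for the bracketed term in \eqref{tilde-F-funct-def}, the strict monotonicity of $G^{\varepsilon}(x,\cdot)$ (Lemma \ref{l1}(i)) makes $\tau\mapsto \tfrac{1}{G^{\varepsilon}(x,0)}-\tfrac{1}{G^{\varepsilon}(x,\tau)}$ strictly increasing from $0$, while $w_1^{\varepsilon}(x,\cdot)$ is strictly increasing (Lemma \ref{l1}(ii)) and $\tau\mapsto\tau f(x)$ is strictly increasing since $f\ge f_{\min}>0$; hence $B^{\varepsilon}(x,\cdot)$ is strictly increasing with $B^{\varepsilon}(x,0)=0$, so $F^{\varepsilon}(x,\cdot)=h_+^{\varepsilon}(x)+\varepsilon-B^{\varepsilon}(x,\cdot)$ is strictly decreasing and $F^{\varepsilon}(x,0)=h_+^{\varepsilon}(x)+\varepsilon\ge\varepsilon$. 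For the last point I would keep only the summand $\tau_2 f(x)\ge\tau_2 f_{\min}$ in $B^{\varepsilon}(x,\tau_2)$, discard the two nonnegative ones, and use $h_+^{\varepsilon}(x)\le\|(2s+rs)/(s+a)\|_{\infty}$ for $0\le\varepsilon\le1$ (because $h^{\varepsilon}(x)\le\tfrac{(1+r(x))s(x)}{a(x)+s(x)}\le\|(2s+rs)/(s+a)\|_{\infty}$); then $F^{\varepsilon}(x,\tau_2)\le\|(2s+rs)/(s+a)\|_{\infty}+1-\tau_2 f_{\min}<0$ by the definition of $\tau_2$.

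For (ii): for $0<\varepsilon<1$ and $x\in\bar{\Omega}$, continuity together with $F^{\varepsilon}(x,0)\ge\varepsilon>0>F^{\varepsilon}(x,\tau_2)$ and strict monotonicity give, by the intermediate value theorem, a unique zero $w_2^{\varepsilon}(x)\in(0,\tau_2)$. Hölder regularity in $x$ follows from the quantitative monotonicity $B^{\varepsilon}(x,\tau)-B^{\varepsilon}(x,\tau')\ge f_{\min}(\tau-\tau')$ for $\tau\ge\tau'\ge0$: assuming $w_2^{\varepsilon}(x)\ge w_2^{\varepsilon}(y)$, the identity $0=F^{\varepsilon}(x,w_2^{\varepsilon}(x))=F^{\varepsilon}(x,w_2^{\varepsilon}(y))-[B^{\varepsilon}(x,w_2^{\varepsilon}(x))-B^{\varepsilon}(x,w_2^{\varepsilon}(y))]$ and $F^{\varepsilon}(y,w_2^{\varepsilon}(y))=0$ yield $f_{\min}|w_2^{\varepsilon}(x)-w_2^{\varepsilon}(y)|\le|F^{\varepsilon}(x,w_2^{\varepsilon}(y))-F^{\varepsilon}(y,w_2^{\varepsilon}(y))|\le[F^{\varepsilon}]_{C^{\theta}}|x-y|^{\theta}$, where $[F^{\varepsilon}]_{C^{\theta}}$ denotes the Hölder seminorm in $x$, uniform over $\tau\in[0,\tau_2]$ and $\varepsilon\in[0,1]$. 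For \eqref{l2-eq1}, set $\tau:=w_2^{\varepsilon}(x)>0$ and $w_1:=w_1^{\varepsilon}(x;\tau)$; using $\tfrac{2(\varepsilon+r(x))s(x)}{G^{\varepsilon}(x,0)}=\tfrac{(\varepsilon+r(x))s(x)}{a(x)+s(x)}=h^{\varepsilon}(x)+e(x)$ and $\tfrac{2(\varepsilon+r(x))s(x)}{G^{\varepsilon}(x,\tau)}=\tfrac{s(x)w_1}{\tau}$ (from \eqref{w-1-def}), the equation $F^{\varepsilon}(x,\tau)=0$ becomes $h_+^{\varepsilon}(x)+\varepsilon=h^{\varepsilon}(x)+e(x)-\tfrac{s(x)w_1}{\tau}+g(x)w_1+\tau f(x)$; multiplying by $\tau$ and using $h^{\varepsilon}(x)-h_+^{\varepsilon}(x)\le0$ gives $s(x)w_1\le \tau e(x)-\varepsilon\tau+\tau g(x)w_1+\tau^2 f(x)$, which is exactly \eqref{l2-eq1}.

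For (iii): this is where $e_{\min}>0$ is used and where the real work lies. I would argue by contradiction: if $(s-gw_2^{\varepsilon})_{\min}\to\ell\le0$ along some $\varepsilon_n\downarrow0$, pick $x_n\in\bar{\Omega}$ realizing the minimum. Since $0<w_2^{\varepsilon}<\tau_2$ and, by (ii), $\{w_2^{\varepsilon}\}_{0<\varepsilon<1}$ is equi-Hölder (the constant $[F^{\varepsilon}]_{C^{\theta}}$ being uniform in $\varepsilon$), Arzelà--Ascoli provides a subsequence with $x_n\to x_*$ and $w_2^{\varepsilon_n}\to w_2^{*}$ in $C(\bar{\Omega})$; and since every ingredient of \eqref{tilde-F-funct-def} converges uniformly on $\bar{\Omega}\times[0,\tau_2]$ as $\varepsilon\to0$, one has $F^{\varepsilon}\to F^{0}$ uniformly there, so letting $n\to\infty$ in $F^{\varepsilon_n}(x,w_2^{\varepsilon_n}(x))=0$ gives $F^{0}(x,w_2^{*}(x))=0$ for all $x$, and $s(x_*)-g(x_*)w_2^{*}(x_*)=\ell\le0$. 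As $s(x_*)\ge s_{\min}>0$, necessarily $g(x_*)>0$ and $w_2^{*}(x_*)\ge s(x_*)/g(x_*)>0$; since $F^{0}(x_*,\cdot)$ is strictly decreasing with $F^{0}(x_*,0)=h_+^{0}(x_*)\ge0$, the positivity of $w_2^{*}(x_*)$ forces $h^{0}(x_*)>0$, hence $h_+^{0}(x_*)=\tfrac{r(x_*)s(x_*)}{a(x_*)+s(x_*)}-e(x_*)$. Now a direct evaluation of \eqref{tilde-F-funct-def} at $\tau=s(x_*)/g(x_*)$, using that $g(x_*)\tau=s(x_*)$ makes the two $1/G^{0}$-contributions cancel (since $g(x)w_1^{0}(x;\tau)=\tfrac{2r(x)g(x)\tau}{G^{0}(x,\tau)}$), gives $F^{0}\big(x_*,\tfrac{s(x_*)}{g(x_*)}\big)=h_+^{0}(x_*)-\tfrac{r(x_*)s(x_*)}{a(x_*)+s(x_*)}-\tfrac{s(x_*)f(x_*)}{g(x_*)}=-e(x_*)-\tfrac{s(x_*)f(x_*)}{g(x_*)}\le-e_{\min}<0$. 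Strict monotonicity of $F^{0}(x_*,\cdot)$ and $F^{0}(x_*,w_2^{*}(x_*))=0$ then force $w_2^{*}(x_*)<s(x_*)/g(x_*)$, contradicting $w_2^{*}(x_*)\ge s(x_*)/g(x_*)$; hence $\liminf_{\varepsilon\to0^{+}}(s-gw_2^{\varepsilon})_{\min}>0$, which yields \eqref{l2-eq2}. I expect the main obstacle to be precisely this last computation --- recognizing that the awkward $1/G^{0}$ terms of $F^{0}$ cancel exactly at $\tau=s/g$, leaving the transparent quantity $-e-sf/g$, so that $e_{\min}>0$ is exactly what keeps $w_2^{\varepsilon}$ uniformly below $s/g$.
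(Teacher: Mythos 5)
Your proofs of (i) and (ii) track the paper's closely: for (i) the paper simply declares the claim "readily follows from \eqref{tilde-F-funct-def}", and your detailed verification of the bound $F^{\varepsilon}(x,\tau_2)<0$ is exactly the intended bookkeeping; for (ii), your derivation of \eqref{l2-eq1} (multiply $F^{\varepsilon}(x,w_2^{\varepsilon}(x))=0$ by $w_2^{\varepsilon}(x)$, use $\tfrac{2(\varepsilon+r)s}{G^{\varepsilon}(x,0)}=h^{\varepsilon}+e$ and $\tfrac{2(\varepsilon+r)s\tau}{G^{\varepsilon}(x,\tau)}=s\,w_1^{\varepsilon}$, then drop $h^{\varepsilon}-h_+^{\varepsilon}\le 0$) is the same computation the paper carries out, and your quantitative Hölder estimate for $w_2^{\varepsilon}$ (split differences through $f_{\min}|\Delta\tau|\le B^{\varepsilon}(x,\tau)-B^{\varepsilon}(x,\tau')$) is a perfectly serviceable substitute for the paper's one-line appeal to the implicit function theorem.

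Part (iii) is where you take a genuinely different route, and both routes work. The paper rearranges $w_2^{\varepsilon}F^{\varepsilon}(\cdot,w_2^{\varepsilon})=0$ into the identity
\begin{equation*}
s-g\,w_2^{\varepsilon} \;=\; 2\Big(\tfrac{(\varepsilon+r)s}{a+s}+f\,w_2^{\varepsilon}-(h_+^{\varepsilon}+\varepsilon)\Big)\frac{\varepsilon+r}{G^{\varepsilon}(\cdot,w_2^{\varepsilon})},
\end{equation*}
which, in the case $h^{\varepsilon}\ge 0$, collapses to $s-g\,w_2^{\varepsilon}=2(e+f\,w_2^{\varepsilon}-\varepsilon)\tfrac{\varepsilon+r}{G^{\varepsilon}}$, exposing directly how $e_{\min}>0$ controls the sign; in the case $h^{\varepsilon}\le 0$ the same identity gives $s-g\,w_2^{\varepsilon}=2\big(\tfrac{(\varepsilon+r)s}{a+s}+f\,w_2^{\varepsilon}-\varepsilon\big)\tfrac{\varepsilon+r}{G^{\varepsilon}}$, which is bounded below using $r_{\min}s_{\min}>0$. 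This produces explicit constants $\tilde{\varepsilon}_{\tau_2}$ and $\tilde{m}_{\tau_2}$ in closed form. You instead argue by contradiction: extract a uniformly convergent subsequence $w_2^{\varepsilon_n}\to w_2^*$ (equicontinuity from (ii), $F^{\varepsilon_n}\to F^0$ uniformly), localize at $x_*$ where $s-gw_2^*\le 0$, and compute $F^0(x_*,s(x_*)/g(x_*))=-e(x_*)-\tfrac{s(x_*)f(x_*)}{g(x_*)}\le -e_{\min}<0$ via the cancellation $g\,w_1^0(x;s/g)=\tfrac{2rs}{G^0(x,s/g)}$, which contradicts $w_2^*(x_*)\ge s(x_*)/g(x_*)$ by strict monotonicity. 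Your computation at $\tau=s/g$ is correct and is really the limiting form of the paper's exact identity (the cancellation you observe is the same algebra, evaluated at a threshold rather than at the root). The trade-off is that the paper's direct approach yields quantitative constants without compactness, whereas yours is softer, nonquantitative, and leans on two facts you assert but do not establish: uniform-in-$\varepsilon$ boundedness of the Hölder seminorm $[F^{\varepsilon}]_{C^{\theta}}$ (needed for Arzelà--Ascoli), and the uniform convergence $F^{\varepsilon}\to F^0$ on $\bar{\Omega}\times[0,\tau_2]$. Both are true and routine given the closed-form expressions and $G^{\varepsilon}\ge 2(a+s)_{\min}>0$, but they should be spelled out, especially since the paper's avoidance of compactness is precisely what makes the original argument tighter.
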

	\begin{proof} {\rm (i)} It is clear that {\rm (i)} readily follows from \eqref{tilde-F-funct-def}.

		\medskip
		
		{\rm(ii)} Since for every $x\in\bar{\Omega}$ and $0<\varepsilon\le 1$, $F^{\varepsilon}(x,0)>0$ and  $F^{\varepsilon}(x,\tau_2)<0$, then by the intermediate value theorem there is $w_2^{\varepsilon}(x)$ such that $F^{\varepsilon}(x,w_2^{\varepsilon}(x))=0$. Observing that $\partial_{\tau}F^{\varepsilon}(x,\tau)<0$ for all $\tau\ge 0$, the the uniqueness and regularity of $w_2^{\varepsilon}$ follows from the implicit function theorem. Finally, fix $0<\varepsilon\le 1$ and $x\in\bar{\Omega}$. Then,
		\begin{align*}
			0=&w_2^{\varepsilon}(x)F^{\varepsilon}(x,w_2^{\varepsilon}(x))\cr 
			= & (h^{\varepsilon}_+(x)+\varepsilon)w_2^{\varepsilon}(x)-\frac{2(\varepsilon+r(x))s(x)w_2^{\varepsilon}(x))}{G^{\varepsilon}(x,0)}+(s(x)-g(x)w_2^{\varepsilon}(x))w_1^{\varepsilon}(x;w_2^{\varepsilon}(x))-f(x)[w_2^{\varepsilon}(x)]^2\cr 
			\ge& (h^{\varepsilon}(x)+\varepsilon)w_2^{\varepsilon}(x)-\frac{2(\varepsilon+r(x))s(x)w_2^{\varepsilon}(x))}{G^{\varepsilon}(x,0)}+(s(x)-g(x)w_2^{\varepsilon}(x))w_1^{\varepsilon}(x;w_2^{\varepsilon}(x))-f(x)[w_2^{\varepsilon}(x)]^2\cr
			=& (h^{\varepsilon}(x)+\varepsilon)w_2^{\varepsilon}(x)-\frac{(\varepsilon+r(x))s(x)w_2^{\varepsilon}(x))}{a(x)+s(x)}+(s(x)-g(x)w_2^{\varepsilon}(x))w_1^{\varepsilon}(x;w_2^{\varepsilon}(x))-f(x)[w_2^{\varepsilon}(x)]^2\cr
			=& \varepsilon w_2^{\varepsilon}(x)+(s(x)-g(x)w_2^{\varepsilon}(x))w_1^{\varepsilon}(x;w_2^{\varepsilon}(x))-(e(x)+f(x)w_2^{\varepsilon}(x))w_2^{\varepsilon}(x),
		\end{align*}
		from which \eqref{l2-eq1} follows.
		
		\medskip
		
		{\rm (iii)} Suppose that $e_{\min}>0$. Thanks to the fact that $w_2^{\varepsilon}F^{\varepsilon}(\cdot,w_2^{\varepsilon})=0$, we have that 
		\begin{align*}
			0= \Big((h_+^{\varepsilon}+\varepsilon)-\frac{(\varepsilon+r(x))s(x)}{(a(x)+s(x))}-f(x)w_2^{\varepsilon}(x)\Big)w_2^{\varepsilon}(x)+(s(x)-g(x)w_2^{\varepsilon}(x))w_1^{\varepsilon}(x;w_2^{\varepsilon}).
		\end{align*}
		Hence
		\begin{align}
			s(x)-g(x)w_2^{\varepsilon}=&\Big(\frac{(\varepsilon+r(x))s(x)}{(a(x)+s(x))}+f(x)w_2^{\varepsilon}(x)-(h_+^{\varepsilon}+\varepsilon)\Big)\frac{w_2^{\varepsilon}(x)}{w_1^{\varepsilon}(x;w_2^{\varepsilon}(x))}\cr
			=&2\Big(\frac{(\varepsilon+r(x))s(x)}{(a(x)+s(x))}+f(x)w_2^{\varepsilon}(x)-(h_+^{\varepsilon}+\varepsilon)\Big)\frac{(\varepsilon+r(x))}{G^{\varepsilon}(x,w_2^{\varepsilon}(x))}.
		\end{align}
		If $h^{\varepsilon}(x)\le 0$, then
		\begin{align*}
			s(x)-g(x)w_2^{\varepsilon}(x)=&2\Big(\frac{(\varepsilon+r(x))s(x)}{(a(x)+s(x))}+f(x)w_2^{\varepsilon}(x)-\varepsilon\Big)\frac{(\varepsilon+r(x))}{G^{\varepsilon}(x,w_2^{\varepsilon}(x))}\cr
			\ge &2\Big(\frac{r_{\min}s_{\min}}{\|a+s\|_{\infty}}-\varepsilon\Big)\frac{(\varepsilon+r(x))}{G^{\varepsilon}(x,w_2^{\varepsilon}(x))}
			\ge \frac{r^2_{\min}s_{\min}}{\|a+s\|_{\infty}\|G^{1}(\cdot,\tau_2)\|_{\infty}}, 
		\end{align*}
		whenever $0<\varepsilon<\min\Big\{1,\frac{r_{\min}s_{\min}}{2\|a+s\|_{\infty}}\Big\}$.\\
		If $h^{\varepsilon}(x)\ge 0$, then 
		\begin{align*}
			s(x)-g(x)w_2^{\varepsilon}(x)=&2\Big(e(x)+f(x)w_2^{\varepsilon}(x)-\varepsilon\Big)\frac{(\varepsilon+r(x))}{G^{\varepsilon}(x,w_2^{\varepsilon}(x))}\cr
			\ge& \frac{e_{\min}r_{\min}}{\|G^1(\cdot;\tau_2)\|_{\infty}}\quad \text{whenever}\ 0<\varepsilon\le \min\Big\{1,\frac{e_{\min}}{2}\Big\} \varepsilon.
		\end{align*}
		We can now take $\tilde{\varepsilon}_{\tau_2}:=\min\Big\{1,\frac{r_{\min}s_{\min}}{2\|a+s\|_{\infty}},\frac{e_{\min}}{2}\Big\}$ and $\tilde{m}_{\tau_2}=\min\Big\{\frac{r^2_{\min}s_{\min}}{\|a+s\|_{\infty}\|G^{1}(\cdot,\tau_2)\|_{\infty}},\frac{r^2_{\min}s_{\min}}{\|a+s\|_{\infty}\|G^{1}(\cdot,\tau_2)\|_{\infty}}\Big\}$.
	\end{proof}
	
	\medskip
	
	\begin{lem}\label{l3} Let $\tau_2>0$ and $\tilde{\varepsilon}_{\tau_2}$ be as in Lemma \ref{l2}. Let  $\varepsilon_{\tau_2}$ be as in Lemma \ref{l1}. Set ${\varepsilon}_*=\min\{\tilde{\varepsilon}_{\tau_2},\varepsilon_{\tau_2}\}$. For every $0<\varepsilon<{\varepsilon}_*$, let ${\bf w }^{\varepsilon}(x)=(w_1^{\varepsilon}(x;w_{2}^{\varepsilon}(x)),w_2^{\varepsilon}(x))$ for all $x\in\bar{\Omega}$, where $w_2^{\varepsilon}$ is given by Lemma \ref{l2} and $w_1^{\varepsilon}$ is defined by \eqref{w-1-def}. Then there exists   $\tilde{\bf w}^{\varepsilon}\in [C^2(\bar{\Omega})]_{\vec{n}}^2:=\{(w_1,w_2)\in [C^2(\bar{\Omega})]^2 : \partial_{\vec{n}}w_i=0 \ \text{on}\ \partial\Omega\}$ satisfying 
		\begin{equation}\label{l3-eq1}
			\min\{(r-c\tilde{w}_1^{\varepsilon})_{\min}, (s-g\tilde{w}_2^{\varepsilon})_{\min}\}>0, \quad \tilde{\bf w}^{\varepsilon}\in [C^{++}(\bar{\Omega})]^2, \quad \text{and}\quad   \lim_{\varepsilon\to 0}\|\tilde{\bf w}^{\varepsilon}-{\bf w}^{\varepsilon}\|=0.
		\end{equation}
		Furthermore, for each $0<\varepsilon<\varepsilon_*$, there is $d^{\varepsilon}>0$ such that for any choice of diffusion rates $0<d_1,d_2<d^{\varepsilon}$, $\tilde{\bf w}^{\varepsilon}$ satisfies
		\begin{equation}\label{l3-eq2}
			\begin{cases}
				0\ge d_1\Delta \tilde{w}_1^{\varepsilon} +(r(x)-c(x)\tilde{w}_1^{\varepsilon})\tilde{w}_2^{\varepsilon}-(a+s+b\tilde{w}_1^{\varepsilon}) \tilde{w}_1^{\varepsilon} & x\in\Omega,\cr 
				0\ge d_2\Delta \tilde{w}_2^{\varepsilon} +(s-g\tilde{w}_2^{\varepsilon})\tilde{w}_1^{\varepsilon}-(e+f\tilde{w}_2^{\varepsilon})\tilde{w}_2^{\varepsilon} & x\in\Omega,\cr   0=\partial_{\vec{n}}\tilde{w}_1^{\varepsilon}=\partial_{\vec{n}}\tilde{w}_2^{\varepsilon} & x\in\partial\Omega.
			\end{cases}
		\end{equation} 
	\end{lem}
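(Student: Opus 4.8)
The plan is to build the desired $\tilde{\bf w}^{\varepsilon}$ by smoothing the pair ${\bf w}^{\varepsilon}=(w_1^{\varepsilon}(\cdot;w_2^{\varepsilon}),w_2^{\varepsilon})$, which by Lemmas \ref{l1} and \ref{l2} is a \emph{strict} supersolution of the no-diffusion (kinetic) system with a strictly positive defect, and then absorbing the diffusion terms into that defect. First I would record the relevant properties of ${\bf w}^{\varepsilon}$ for a fixed $\varepsilon\in(0,\varepsilon_*)$. Since $F^{\varepsilon}(\cdot,\tau_2)<0$, $F^{\varepsilon}$ is strictly decreasing in $\tau$, and $F^{\varepsilon}(\cdot,w_2^{\varepsilon})=0$, we get $0<w_2^{\varepsilon}(x)<\tau_2$ on $\bar{\Omega}$, hence also $w_1^{\varepsilon}(x;w_2^{\varepsilon}(x))>0$; thus ${\bf w}^{\varepsilon}\in[C^{++}(\bar{\Omega})]^2$ with $w_{i,\min}^{\varepsilon}:=\min_{\bar{\Omega}}w_i^{\varepsilon}>0$, and $\|{\bf w}^{\varepsilon}\|\le R$ for a constant $R$ independent of $\varepsilon\in(0,\varepsilon_*)$ (from \eqref{w-1-def} and Lemma \ref{l1}(i), since $G^{\varepsilon}\ge 2(a+s)_{\min}$ and $\varepsilon\le1$). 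Substituting $\tau=w_2^{\varepsilon}(x)$ into \eqref{l1-eq1} and using \eqref{l2-eq1} yields, with $w_1^{\varepsilon}=w_1^{\varepsilon}(\cdot;w_2^{\varepsilon})$,
\begin{equation}\label{plan-kinetic}
\begin{aligned}
(r-cw_1^{\varepsilon})w_2^{\varepsilon}-(a+s+bw_1^{\varepsilon})w_1^{\varepsilon} & =-\varepsilon w_2^{\varepsilon}\le -\varepsilon w_{2,\min}^{\varepsilon}<0,\\
(s-gw_2^{\varepsilon})w_1^{\varepsilon}-(e+fw_2^{\varepsilon})w_2^{\varepsilon} & \le-\varepsilon w_2^{\varepsilon}\le-\varepsilon w_{2,\min}^{\varepsilon}<0\quad\text{on }\bar{\Omega},
\end{aligned}
\end{equation}
and, by Lemma \ref{l1}(iii) (applied with $\tau=w_2^{\varepsilon}(x)\le\tau_2$) and Lemma \ref{l2}(iii) (using $e_{\min}>0$ from {\bf (H3)}), $(r-cw_1^{\varepsilon})_{\min}\ge m_{\tau_2}>0$ and $(s-gw_2^{\varepsilon})_{\min}\ge\tilde{m}_{\tau_2}>0$.

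Next I would mollify with the Neumann heat semigroup. For $t>0$, $e^{t\Delta}$ maps $C(\bar{\Omega})$ into $\cap_{k\ge1}{\rm Dom}(\Delta^k)\subset C^{\infty}(\bar{\Omega})$ with $\partial_{\vec{n}}(e^{t\Delta}v)=0$ on $\partial\Omega$ (elliptic regularity and Sobolev embedding), and $e^{t\Delta}v\to v$ in $C(\bar{\Omega})$ as $t\to0^+$ by strong continuity of the semigroup. So I set $\tilde{\bf w}^{\varepsilon}:=\big(e^{t_{\varepsilon}\Delta}w_1^{\varepsilon}(\cdot;w_2^{\varepsilon}),\,e^{t_{\varepsilon}\Delta}w_2^{\varepsilon}\big)\in[C^2(\bar{\Omega})]_{\vec{n}}^2$ and pick $t_{\varepsilon}>0$ small enough that $\|\tilde{\bf w}^{\varepsilon}-{\bf w}^{\varepsilon}\|$ is simultaneously $<\varepsilon$, $<\tfrac12 w_{i,\min}^{\varepsilon}$ for $i=1,2$, $<m_{\tau_2}/(2\|c\|_{\infty}+2)$, $<\tilde{m}_{\tau_2}/(2\|g\|_{\infty}+2)$, and $<\varepsilon w_{2,\min}^{\varepsilon}/(2L)$, where $L$ is a Lipschitz constant for the two polynomial reaction maps $(\xi_1,\xi_2)\mapsto(r-c\xi_1)\xi_2-(a+s+b\xi_1)\xi_1$ and $(\xi_1,\xi_2)\mapsto(s-g\xi_2)\xi_1-(e+f\xi_2)\xi_2$ on $\{|\xi|\le R+1\}$ (finite, depending only on $R$ and the sup-norms of $a,b,c,e,f,g,r,s$). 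This is a finite list of thresholds on $t_{\varepsilon}$, so a valid $t_{\varepsilon}>0$ exists. The first three bounds give $\tilde{\bf w}^{\varepsilon}\in[C^{++}(\bar{\Omega})]^2$; the next two give $(r-c\tilde{w}_1^{\varepsilon})_{\min}\ge\tfrac12 m_{\tau_2}>0$ and $(s-g\tilde{w}_2^{\varepsilon})_{\min}\ge\tfrac12\tilde{m}_{\tau_2}>0$; and $\|\tilde{\bf w}^{\varepsilon}-{\bf w}^{\varepsilon}\|<\varepsilon$ gives $\lim_{\varepsilon\to0}\|\tilde{\bf w}^{\varepsilon}-{\bf w}^{\varepsilon}\|=0$. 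This establishes \eqref{l3-eq1}.

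For the supersolution inequalities \eqref{l3-eq2}, I would combine \eqref{plan-kinetic} with the Lipschitz bound and the choice $\|\tilde{\bf w}^{\varepsilon}-{\bf w}^{\varepsilon}\|<\varepsilon w_{2,\min}^{\varepsilon}/(2L)$: the two reaction terms evaluated at $\tilde{\bf w}^{\varepsilon}$ are then pointwise $\le-\tfrac12\varepsilon w_{2,\min}^{\varepsilon}<0$ on $\bar{\Omega}$. Since $\tilde{\bf w}^{\varepsilon}\in[C^2(\bar{\Omega})]^2$, the number $N_{\varepsilon}:=\|\Delta\tilde{w}_1^{\varepsilon}\|_{\infty}+\|\Delta\tilde{w}_2^{\varepsilon}\|_{\infty}$ is finite; set $d^{\varepsilon}:=\varepsilon w_{2,\min}^{\varepsilon}/(2N_{\varepsilon}+2)>0$. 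Then for any $0<d_1,d_2<d^{\varepsilon}$ we have $d_i\|\Delta\tilde{w}_i^{\varepsilon}\|_{\infty}<\tfrac12\varepsilon w_{2,\min}^{\varepsilon}$, so adding $d_i\Delta\tilde{w}_i^{\varepsilon}$ to the corresponding line makes the right-hand side $\le0$ on $\bar{\Omega}$, and $\partial_{\vec{n}}\tilde{w}_i^{\varepsilon}=0$ by construction; this is exactly \eqref{l3-eq2}. I expect the mollification step to be the delicate point: ${\bf w}^{\varepsilon}$ is only H\"older continuous and generically fails the Neumann condition, so it cannot serve directly as a supersolution of \eqref{Eq1}; smoothing with $e^{t\Delta}$ repairs both defects at once, and it is precisely the strictly positive margin $\varepsilon w_{2,\min}^{\varepsilon}$ produced in the first step that lets the errors from mollifying and from reinstating the diffusion terms be absorbed.
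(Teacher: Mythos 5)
Your proposal is correct and follows essentially the same route as the paper: record that ${\bf w}^{\varepsilon}$ is a strictly positive kinetic supersolution with defect $-\varepsilon w_2^{\varepsilon}$ (from \eqref{l1-eq1} and \eqref{l2-eq1}) and lower bounds $m_{\tau_2},\tilde m_{\tau_2}$ on $r-cw_1^{\varepsilon}$ and $s-gw_2^{\varepsilon}$, mollify with the Neumann heat semigroup $e^{t_{\varepsilon}\Delta}$ for $t_{\varepsilon}$ small, and then absorb the diffusion terms into the residual margin by choosing $d^{\varepsilon}$ proportional to $\varepsilon\min\tilde w_2^{\varepsilon}/\big(1+\|\Delta\tilde w_1^{\varepsilon}\|_{\infty}+\|\Delta\tilde w_2^{\varepsilon}\|_{\infty}\big)$. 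Your explicit Lipschitz estimate for transferring the kinetic inequalities from ${\bf w}^{\varepsilon}$ to $\tilde{\bf w}^{\varepsilon}$ just makes precise the paper's ``possibly after decreasing $t_{\varepsilon}$'' step.
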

	\begin{proof} Let $\{e^{t\Delta}\}_{t\ge 0}$ denote the analytic $c_0$-semigroup generated by the Laplace operator on $C(\bar{\Omega})$ subject to the homogeneous Neumann boundary conditions on $\partial\Omega$. Then,  for every $z\in C(\bar{\Omega})$, $e^{t\Delta}z\in {\rm Dom}_{\infty}(\Delta)$ for all $t>0$ and $\|e^{tz}-z\|_{\infty}\to 0$ as $t\to0^+$. Furthermore, by the maximum principle for parabolic equations, $e^{t\Delta}z\in C^{++}(\bar{\Omega})$ for all $t>0$ whenever $z\in C^{+}(\bar{\Omega})\setminus\{0\}$.
		
		\medskip
		Let ${m}_*:=\min\{m_{\tau_2},\tilde{m}_{\tau_2}\}$, where $m_{\tau_2}$ and $\tilde{m}_{\tau_2}$ are given by Lemma \ref{l1}-{\rm (iii)} and Lemma \ref{l2}-{\rm (iii)}, respectively. Since for every $0<\varepsilon<\varepsilon_*$, $0<\tilde{w}_2^{\varepsilon}$, it follows from Lemma \ref{l1}-{\rm (iii)}  and Lemma \ref{l2}-{\rm (iii)} that 
		\begin{equation*}
			\min_{x\in\bar{\Omega}}(r-cw_1^{\varepsilon}(x;w_2^{\varepsilon}(x)))\ge m_{*} \quad \text{and}
			\quad 
			(s(x)-g(x)w_2^{\varepsilon}(x))\ge m_*.
		\end{equation*}
		Hence, for every $0<\varepsilon<\varepsilon_*$, we can choose $0<t_{\varepsilon}\ll 1$ such that $\tilde{w}^{\varepsilon}_1:=e^{t_{\varepsilon}\Delta}w_1^{\varepsilon}(\cdot;w_2^{\varepsilon}(\cdot))\in C^{++}(\bar{\Omega})$ and $\tilde{w}_2^{\varepsilon}:=e^{t_{\varepsilon}\Delta}w_2^{\varepsilon}\in C^{++}(\bar{\Omega})$ satisfy 
		\begin{equation*}
			\min_{x\in\bar{\Omega}}(r-c\tilde{w}_1^{\varepsilon}(x))\ge \frac{m_{*}}{2},
			\quad 
			\min_{x\in\bar{\Omega}}(s(x)-g(x)\tilde{w}_2^{\varepsilon}(x))\ge \frac{m_*}{2},\quad \text{and}\quad \|\tilde{\bf w}^{\varepsilon}-{\bf w}^{\varepsilon}\|<\varepsilon.
		\end{equation*}
		Furthermore, by \eqref{l1-eq1} and \eqref{l2-eq1}, possible after decreasing $0<t_{\varepsilon}$ we may suppose that 
		\begin{equation}\label{A3}
			\begin{cases}
				-\frac{\varepsilon}{3} \tilde{w}_2^{\varepsilon}(x)\ge (r(x)-c(x)\tilde{w}_1^{\varepsilon}(x))\tilde{w}_2^{\varepsilon}-(a(x)+s(x)+b(x)\tilde{w}_1^{\varepsilon}(x))\tilde{w}_1^{\varepsilon}(x) & x\in\bar{\Omega},
				\cr
				-\frac{\varepsilon}{3} \tilde{w}_2^{\varepsilon}(x)\ge (s(x)-g(x)\tilde{w}_2^{\varepsilon}(x))\tilde{w}_1^{\varepsilon}(x))-(e(x)+f(x)\tilde{w}_2^{\varepsilon}(x))\tilde{w}_2^{\varepsilon}(x) & x\in\bar{\Omega}.
			\end{cases}
		\end{equation}
		Finally, for every $0<\varepsilon<\varepsilon_*$, take
		$$
		d^{\varepsilon}:=
		\frac{\varepsilon\min_{x\in\bar{\Omega}}\tilde{w}_2^{\varepsilon}(x)}{3(1+\|\Delta\tilde{w}_1^{\varepsilon}\|_{\infty}+\|\Delta\tilde{w}_2^{\varepsilon}\|_{\infty})}.
		$$
		Then by \eqref{A3}, $\tilde{\bf w}^{\varepsilon}$ satisfies \eqref{l3-eq2} for every $0<d_1,d_2<d^{\varepsilon}$, which completes the proof of the lemma.  
	\end{proof}

	\begin{lem}\label{l4} Suppose that $(\frac{rs}{a+s}-e)_{\max}>0$. 
		Fix $0<\varepsilon<{\varepsilon}_*$, where ${\varepsilon}_*$ is  as in Lemma \ref{l3}. Let $\tilde{\bf w}^{\varepsilon}$ and $d^{\varepsilon}$ be as in Lemma \ref{l3}. Then there is $0<\tilde{d}^{\varepsilon}<d^{\varepsilon}$ such that for every $0<d_1,d_2<\tilde{d}^{\varepsilon}$, $\lambda_{d_1,d_2}>0$ and   system \eqref{Eq1} has a positive steady state ${\bf u}$ satisfying ${\bf 0}<< {\bf u}\le \tilde{\bf w}^{\varepsilon}$.

	\end{lem}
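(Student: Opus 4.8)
The plan is to run the monotone iteration (sub--super-solution) scheme for the elliptic system associated with \eqref{Eq1}, taking $\tilde{\bf w}^{\varepsilon}$ from Lemma~\ref{l3} as a super-solution and building, for diffusion rates below a suitable $\tilde d^{\varepsilon}$, a positive sub-solution $\underline{\bf u}_d$ with $\underline{\bf u}_d\le\tilde{\bf w}^{\varepsilon}$; the same sub-solution will also certify $\lambda_{d_1,d_2}>0$. Since $s_{\min}>0$ forces $(a+s)_{\min}>0$, the hypothesis $\big(\tfrac{rs}{a+s}-e\big)_{\max}>0$ yields a point $x_0\in\Omega$ (interior after a harmless perturbation, the inequality being open) at which $(a(x_0)+s(x_0))e(x_0)-r(x_0)s(x_0)<0$, so the cooperative matrix
\[
A_0:=\begin{pmatrix} -(a(x_0)+s(x_0)) & r(x_0)\\ s(x_0) & -e(x_0)\end{pmatrix}
\]
has a strictly positive principal eigenvalue $\mu_0>0$ with a Perron eigenvector $\xi=(\xi_1,\xi_2)\gg{\bf 0}$. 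Write $\mathcal{L}_0$ for the autonomous linearization of \eqref{Eq1} at ${\bf 0}$ (the operator $\mathcal{A}(t)$ with its time-independent coefficients), whose principal eigenvalue is $\lambda_{d_1,d_2}$.

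Let $\beta_n$ denote the first Dirichlet eigenvalue of the unit ball in $\mathbb{R}^n$ and set $\rho(d):=\big(4\beta_n\max\{d_1,d_2\}/\mu_0\big)^{1/2}$. For $\max\{d_1,d_2\}$ so small that $B_{\rho(d)}(x_0)\subset\Omega$ and the coefficients $a,s,r,e$ oscillate by a sufficiently small amount (controlled by $\mu_0$ and $\xi$) on $B_{\rho(d)}(x_0)$, let $\theta_d\ge0$ be the first Dirichlet eigenfunction of $B_{\rho(d)}(x_0)$ extended by $0$ to $\overline\Omega$, and put $\underline{\bf u}_d:=\kappa\,\theta_d\,\xi$. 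On $B_{\rho(d)}(x_0)$ one has $-d_i\Delta\theta_d=\tfrac{d_i\beta_n}{\rho(d)^2}\theta_d\le\tfrac{\mu_0}{4}\theta_d$, so, together with $A_0\xi=\mu_0\xi$ and the oscillation bound, $\mathcal{L}_0(\theta_d\xi)\ge\tfrac{\mu_0}{2}\theta_d\xi$ there, while across $\partial B_{\rho(d)}(x_0)$ the distributional Laplacian of the zero-extension carries only a nonnegative singular part; hence, after testing against nonnegative $H^1$ functions, $\mathcal{L}_0\underline{\bf u}_d\ge\tfrac{\mu_0}{2}\underline{\bf u}_d$ weakly on $\Omega$. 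Since the nonlinearity of \eqref{Eq1} contributes only the nonpositive terms $-(b\underline u_{d,1}+c\underline u_{d,2})\underline u_{d,1}$ and $-(f\underline u_{d,2}+g\underline u_{d,1})\underline u_{d,2}$, of order $\kappa^2\theta_d$, against a linear surplus of order $\kappa\theta_d$, the same $\underline{\bf u}_d$ is a (weak) sub-solution of the full steady-state problem once $\kappa$ is small; shrinking $\kappa$ again --- legitimate because $\varepsilon$, hence $\tilde{\bf w}^{\varepsilon}\in[C^{++}(\overline\Omega)]^2$, is fixed and thus has a positive lower bound --- ensures $\underline{\bf u}_d\le\tilde{\bf w}^{\varepsilon}$. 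The value of $\kappa$ and the two smallness thresholds on $\max\{d_1,d_2\}$ do not depend on $d$, which fixes $\tilde d^{\varepsilon}\in(0,d^{\varepsilon})$.

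Now I would conclude in two strokes. Pairing $\mathcal{L}_0\underline{\bf u}_d\ge\tfrac{\mu_0}{2}\underline{\bf u}_d$ with the strictly positive principal eigenfunction of the adjoint of $\mathcal{L}_0$ and using $\underline{\bf u}_d\ge{\bf 0}$, $\underline{\bf u}_d\not\equiv{\bf 0}$, gives $\lambda_{d_1,d_2}\ge\tfrac{\mu_0}{2}>0$ for all $0<d_1,d_2<\tilde d^{\varepsilon}$. For the steady state, I would apply the monotone iteration scheme on the order interval $[\underline{\bf u}_d,\tilde{\bf w}^{\varepsilon}]\subset[{\bf 0},\tilde{\bf w}^{\varepsilon}]$: by Lemma~\ref{l3}, $r-cu_1\ge(r-c\tilde w_1^{\varepsilon})_{\min}>0$ and $s-gu_2\ge(s-g\tilde w_2^{\varepsilon})_{\min}>0$ for every ${\bf u}$ in this interval, so the reaction term is cooperative (quasi-monotone nondecreasing) there, $\tilde{\bf w}^{\varepsilon}$ is a super-solution by \eqref{l3-eq2}, $\underline{\bf u}_d$ is the matching sub-solution, and the iterates descending from $\tilde{\bf w}^{\varepsilon}$ converge (by elliptic/parabolic regularity) to a classical steady state ${\bf u}$ of \eqref{Eq1} with $\underline{\bf u}_d\le{\bf u}\le\tilde{\bf w}^{\varepsilon}$. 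Finally, ${\bf u}\ge\underline{\bf u}_d\not\equiv{\bf 0}$ and the positivity of the coupling coefficients $r-cu_1$, $s-gu_2$ make the linear cooperative system solved by ${\bf u}$ irreducible, so the strong maximum principle upgrades this to ${\bf 0}\ll{\bf u}$, which is the claim.

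I expect the localized sub-solution to be the delicate point: one must keep the amplitude $\kappa$ (dictated by $\mu_0$ and by the fixed positive lower bound of $\tilde{\bf w}^{\varepsilon}$) compatible with the shrinking radius $\rho(d)$ without letting any constant drift with $d$, and one must absorb the non-smoothness of the truncated Dirichlet eigenfunction, which is why the sub-solution inequalities are phrased weakly and tested only against nonnegative functions. A shortcut avoiding the bump altogether is to quote the standard small-diffusion asymptotics $\lambda_{d_1,d_2}\to\max_{x\in\overline\Omega}\mu(x)$, with $\mu(x)$ the principal eigenvalue of the local matrix displayed above, which gives $\lambda_{d_1,d_2}>0$ directly, and then to use $\delta\varphi$ --- $\varphi$ the positive eigenfunction of \eqref{Eq2}, $\delta>0$ small --- as the sub-solution in place of $\underline{\bf u}_d$.
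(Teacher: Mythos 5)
Your argument is correct and shares the paper's skeleton (use $\tilde{\bf w}^{\varepsilon}$ from Lemma~\ref{l3} as a super-solution, show the trivial state is linearly unstable for small diffusion, exploit the cooperativity on $[{\bf 0},\tilde{\bf w}^{\varepsilon}]$ coming from $(r-c\tilde w_1^{\varepsilon})_{\min}>0$ and $(s-g\tilde w_2^{\varepsilon})_{\min}>0$), but it differs in how the two key sub-steps are executed. For the eigenvalue, the paper simply notes that $(\frac{rs}{a+s}-e)_{\max}>0$ implies $(rs-e(a+s))_{\max}>0$ and quotes \cite[Proposition 1]{CCM2020} to get $\lambda_{d_1,d_2}>0$ for all small $d_1,d_2$ --- exactly the ``shortcut'' you mention at the end --- whereas you build an explicit localized sub-solution $\kappa\,\theta_d\,\xi$ from a Perron eigenvector of the pointwise cooperative matrix and a truncated Dirichlet eigenfunction, and test against the adjoint principal eigenfunction; this is self-contained and quantitative ($\lambda_{d_1,d_2}\ge\mu_0/2$) but costs you the bookkeeping with the shrinking radius $\rho(d)$, the oscillation bound, and the distributional Laplacian of the zero-extension, all of which you handle correctly. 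For existence of the steady state, the paper invokes the abstract theory of monotone dynamical systems \cite{Hess}: the solution semiflow restricted to $[{\bf 0},\tilde{\bf w}^{\varepsilon}]$ is strongly monotone and compact (their Step~1 is essentially your forward-invariance observation), ${\bf 0}$ is linearly unstable by Step~2, and $\tilde{\bf w}^{\varepsilon}$ is a super-solution, so a positive equilibrium exists in between; you instead run the classical elliptic sub--super-solution iteration on $[\underline{\bf u}_d,\tilde{\bf w}^{\varepsilon}]$, which needs the explicit sub-solution but avoids the dynamical-systems machinery. Both routes are sound; yours is more elementary and constructive, the paper's is shorter but leans on two external results.
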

	\begin{proof} We proceed in three steps. Set $[{\bf 0},\tilde{\bf w}^{\varepsilon}]:=\{{\bf u}\in[C(\bar{\Omega})]^2 : {\bf 0}\le {\bf u}\le \tilde{\bf w}^{\varepsilon}\}$.
		\medskip
		
		{\bf Step 1.} Fix $0<d_1,d_2<d^{\varepsilon}$. We show that 
		\begin{equation}\label{A5}
			{\bf u}(t,\cdot;{\bf u}_0)\in[{\bf 0}, \tilde{\bf w}^{\varepsilon}]\quad \text{whenever} \quad {\bf u}_0\in[{\bf 0},\tilde{\bf w}^{\varepsilon}].
		\end{equation}
		To this end, fix ${\bf u}_0\in[{\bf 0},\tilde{\bf w}^{\varepsilon}]$ and set ${\bf v}(t,\cdot):=\tilde{\bf w}^{\varepsilon}-{\bf u}(t,\cdot;{\bf u}_0)$. Then, thanks to \eqref{l3-eq2}, ${\bf v}(t,\cdot)$ satisfies 
		\begin{align*}
			\partial_tv_1=& -d_1\Delta u_1-(r-cu_1)u_2+(a+s+bu_1)u_1\cr 
			\ge& d_1\Delta v_1+(r-cw_1^{\varepsilon})w_2^{\varepsilon}-(r-cu_1)u_2-((a+s+bw_1^{\varepsilon})w_1^{\varepsilon}-(a+s+bu_1)u_1)\cr 
			=&d_1\Delta v_1+(r-cw_1^{\varepsilon})v_2+((r-cw_1^{\varepsilon})u_2-(r-cu_1)u_2)-(a+s+b(u_1+w_1^{\varepsilon}))v_1\cr  
			=&d_1\Delta v_1+(r-cw_1^{\varepsilon})v_2-(a+s+b(u_1+w_1^{\varepsilon})+cu_2)v_1\quad \qquad\qquad\qquad x\in\Omega,\ t>0.
		\end{align*}
		Similarly
		\begin{equation*}
			\partial_tv_2\ge d_2\Delta v_2+(s-gw_1^{\varepsilon})v_1-(e+f(u_2+w_2^{\varepsilon})+gu_1)v_2 \quad x\in\Omega,\ t>0.
		\end{equation*}
		Note also that $\partial_{\vec{n}}v_1=\partial_{\vec{n}}v_2=0$ of $(0,\infty)\times\partial\Omega$. Therefore, since $(r-cw_{1}^{\varepsilon})>0$ and $(s-gw_2^{\varepsilon})>0$ on $\bar{\Omega}$ by \eqref{l3-eq1}, and ${\bf 0}\le {\bf v}(0,\cdot)$, we can employ the comparison principle for cooperative systems to conclude that ${\bf v}(t,\cdot)\ge {\bf 0}$ for all $t>0$, which completes the proof of \eqref{A5}.

		\medskip
		
		{\bf Step 2.} Fix $0<d_1,d_2<\tilde{d}^{\varepsilon}$. There is $0<\tilde{d}^{\varepsilon}\le d^{\varepsilon}$ such that 
		\begin{equation}\label{A6}
			\lambda_*>0\quad \forall\ 0<d_1,d_2<\tilde{d}^{\varepsilon}
		\end{equation}
		Indeed, since $(\frac{rs}{a+s}-e)_{\max}>0$, then $(rs-e(a+s))_{\max}>0$. Then by \cite[Proposition 1 ]{CCM2020}, there is $\tilde{d}>0$ such that $\lambda_*>0$ for every $0<d_1,d_2<\tilde{d}$. So, we can take $\tilde{d}^{\varepsilon}:=\min\{\tilde{d},d^{\varepsilon}\}$, so that \eqref{A6} holds whenever $0<d_1,d_2<\tilde{d}^{\varepsilon}$.
		
		\medskip
		
		{\bf Step 3.} Note that by the first inequality of \eqref{l3-eq1}, and Step 1, solution operator of system \eqref{Eq1} generates a strongly cooperative semi-flow  $\{\Phi(t)\}_{t\ge 0}$ on $[{\bf 0},\tilde{\bf w}^{\varepsilon}]$. Moreover, by the regularity theory for parabolic equations, $\Phi(t)$ is compact for every $t>0$. Moreover, by Step 2, the trivial steady state ${\bf u}:={\bf 0}$ is linearly  unstable, and by step 1, $\tilde{\bf w}^{\varepsilon}$ is super-solution of \eqref{Eq1}. Therefore, by the theory of monotone dynamical systems \cite{Hess}, there is ${\bf 0}<{\bf u}(\cdot;{\bf d})\le \tilde{\bf w}^{\varepsilon}$ such that $\Phi(t){\bf u}(\cdot;{\bf d})={\bf u}(\cdot;{\bf d})$ for all $t\ge 0$. Hence, ${\bf u}(\cdot;{\bf d})$ is a positive steady state solution of \eqref{Eq1}.
	\end{proof}
	
	Thanks to the above lemmas, we can now give a proof of Theorem \ref{TH5}.
	
	\begin{proof}[Proof of Theorem \ref{TH5}] Take $\varepsilon_0=\frac{\varepsilon_*}{2}$ and $d_0=\tilde{d}^{\varepsilon_0}$ where $\varepsilon_*$ and $\tilde{d}^{\varepsilon_0}$ is as in Lemma \ref{l4}.  Fix $0<d_1,d_2<d_0$. By Lemma \ref{l4}, system \eqref{Eq1} has positive steady state ${\bf u}(\cdot;{\bf d})$ satisfying ${\bf u}(\cdot;{\bf d})\le \tilde{\bf w}^{\varepsilon_0}$, where $\tilde{\bf w}^{\varepsilon_0}$ is as in Lemma \ref{l3}.  Therefore, by the first inequality in \eqref{l3-eq1}, we have that 
		$$
		(r-cu_1(\cdot;{\bf d}))_{\min}\ge (r-cw_1^{\varepsilon_0})_{\min}>0\quad \text{and}\quad (s-gu_2(\cdot;{\bf d}))_{\min}\ge (s-gw_2^{\varepsilon_0})_{\min}>0.
		$$
		Therefore, ${\bf u}(\cdot;{\bf d})$ satisfies hypothesis  {\bf (H2)}. Therefore, by Theorem \ref{TH4}, ${\bf u}(\cdot;{\bf d})$ is globally stable with respect to positive perturbations.
	\end{proof}

	\section{Proof of Theorem \ref{TH6}}
	
	As in the previous section, we shall always suppose that {\bf (H1)} and {\bf (H3)} hold. We prove a few lemmas. Throughout the rest of this section, for every $0<\varepsilon<\varepsilon_*$, where $\varepsilon_*$ is given by Lemma \ref{l3}, ${\bf w}^{\varepsilon}$ is as in Lemma \ref{l2} and $\tilde{\bf w}^{\varepsilon}$ is as in Lemma \ref{l3}.
	
	\begin{lem}\label{l5} 
		For every $x\in\bar{\Omega}$, let $w_2^0(x)$ denote the unique nonnegative solution of the algebraic equation in $\tau\ge 0$ of
		\begin{equation}
			F^{0}(x,\tau)=0,
		\end{equation}
		where $F^{\varepsilon}$, $\varepsilon\ge 0$ is defined by \eqref{tilde-F-funct-def}. Define also $w_1^0(x)=w_1^0(x;w_2^{0}(x))$ for every $x\in\bar{\Omega}$, where $w_1$ is defined by \eqref{w-1-def}. Then ${\bf w}^{0}:=(w_1^0,w_2^0)\in [C^{+}(\bar{\Omega})]^2$, $\|{\bf w}^{\varepsilon}-{\bf w}^{0}\|\to 0$ as $\varepsilon\to 0^+$ , where ${\bf w}^{\varepsilon}$ is as in Lemma \ref{l2} for every $\varepsilon>0$. In particular $\|\tilde{\bf w}^{\varepsilon}-{\bf w}^0\|\to 0$ as $\varepsilon\to 0^+$.  Furthermore, when $x\in\bar{\Omega}$ such that $h^{0}(x)>0$, where $h^{\varepsilon}$ is defined in \eqref{h-def-eq}, we have that $\min\{w_1^0(x),w_2^0(x)\}>0$ and 
		\begin{equation}\label{l5-eq1}
			\begin{cases}
				0=(r(x)-c(x)w_1^0(x))w_2^0(x)-(a(x)+s(x)+b(x)w_1^0(x))w_1^0(x),\cr
				0=(s(x)-g(x)w_2^0(x))w_1^0(x)-(e(x)+f(x)w_2^0(x))w_2^0(x),
			\end{cases}
		\end{equation}
		whereas $w_1^0(x)=w_2^0(x)=0$ when $h^0(x)\le 0$.
		
	\end{lem}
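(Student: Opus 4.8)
The plan is to first observe that all structural properties of $G^{\varepsilon}$, $w_1^{\varepsilon}$ and $F^{\varepsilon}$ established in Lemmas \ref{l1} and \ref{l2} extend verbatim to $\varepsilon=0$: the formulas \eqref{G-funct-def}, \eqref{w-1-def}, \eqref{h-def-eq} and \eqref{tilde-F-funct-def} are jointly H\"older (in $x$) and smooth (in $\tau$) in all their arguments, and the cited proofs only used $\varepsilon\ge0$ together with $r_{\min},s_{\min},b_{\min},f_{\min}>0$ from {\bf (H1)} (in particular $(a+s)_{\min}\ge s_{\min}>0$, so $G^{\varepsilon}(x,\tau)\ge 2(a+s)_{\min}>0$ and every quotient below is well behaved). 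Thus $F^{0}$ is H\"older, strictly decreasing in $\tau$, with $F^{0}(x,0)=h^{0}_{+}(x)\ge0$ and $F^{0}(x,\tau_{2})<0$. Moreover, differentiating \eqref{tilde-F-funct-def} in $\tau$ and using that $G^{\varepsilon}(x,\cdot)$ and $w_1^{\varepsilon}(x,\cdot)$ are nondecreasing (Lemma \ref{l1}) yields the uniform slope bound
$$\partial_{\tau}F^{\varepsilon}(x,\tau)\le -f_{\min}<0\qquad\forall\ \varepsilon\ge0,\ x\in\bar{\Omega},\ \tau\ge0 .$$
Hence $F^{0}(x,\cdot)=0$ has a unique root $w_2^{0}(x)\in[0,\tau_{2}]$ for every $x$, with $w_2^{0}(x)=0$ exactly when $h^{0}(x)\le0$ and $w_2^{0}(x)\in(0,\tau_{2})$ when $h^{0}(x)>0$. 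Since $F^{0}$ is H\"older in $x$ uniformly in $\tau\in[0,\tau_{2}]$, the inequality $f_{\min}|w_2^{0}(x)-w_2^{0}(y)|\le|F^{0}(x,w_2^{0}(y))-F^{0}(y,w_2^{0}(y))|$ shows $w_2^{0}\in C(\bar{\Omega})$ (and in particular $w_2^{0}(x)\le h^{0}_{+}(x)/f_{\min}$, which handles continuity across $\partial\{h^{0}>0\}$). Then $w_1^{0}:=w_1^{0}(\cdot;w_2^{0}(\cdot))$ is a composition of continuous maps and is nonnegative by \eqref{w-1-def}, so ${\bf w}^{0}=(w_1^{0},w_2^{0})\in[C^{+}(\bar{\Omega})]^{2}$.

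\textbf{Convergence ${\bf w}^{\varepsilon}\to{\bf w}^{0}$.} I would first prove $w_2^{\varepsilon}\to w_2^{0}$ uniformly. Using $F^{\varepsilon}(x,w_2^{\varepsilon}(x))=0=F^{0}(x,w_2^{0}(x))$ and the uniform slope bound,
$$f_{\min}\,|w_2^{\varepsilon}(x)-w_2^{0}(x)|\le |F^{0}(x,w_2^{\varepsilon}(x))-F^{0}(x,w_2^{0}(x))|=|F^{0}(x,w_2^{\varepsilon}(x))-F^{\varepsilon}(x,w_2^{\varepsilon}(x))|\le \sup_{\bar{\Omega}\times[0,\tau_{2}]}|F^{0}-F^{\varepsilon}| ,$$
and the last supremum tends to $0$ as $\varepsilon\to0^{+}$, because each term of \eqref{tilde-F-funct-def} converges to its $\varepsilon=0$ counterpart uniformly on the compact set $\bar{\Omega}\times[0,\tau_{2}]$ (here $G^{\varepsilon}\ge 2(a+s)_{\min}>0$ and $h^{\varepsilon}_{+},G^{\varepsilon},w_1^{\varepsilon}$ depend continuously on $\varepsilon$). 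Next, splitting
$$|w_1^{\varepsilon}(x;w_2^{\varepsilon}(x))-w_1^{0}(x;w_2^{0}(x))|\le |w_1^{\varepsilon}(x;w_2^{\varepsilon}(x))-w_1^{0}(x;w_2^{\varepsilon}(x))|+|w_1^{0}(x;w_2^{\varepsilon}(x))-w_1^{0}(x;w_2^{0}(x))| ,$$
the first term goes to $0$ uniformly by the uniform convergence $w_1^{\varepsilon}\to w_1^{0}$ on $\bar{\Omega}\times[0,\tau_{2}]$, and the second by the uniform continuity of $w_1^{0}$ on that compact set together with $\|w_2^{\varepsilon}-w_2^{0}\|_{\infty}\to0$. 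Therefore $\|{\bf w}^{\varepsilon}-{\bf w}^{0}\|\to0$, and combining with Lemma \ref{l3} gives $\|\tilde{\bf w}^{\varepsilon}-{\bf w}^{0}\|\le\|\tilde{\bf w}^{\varepsilon}-{\bf w}^{\varepsilon}\|+\|{\bf w}^{\varepsilon}-{\bf w}^{0}\|\to0$.

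\textbf{The two regimes and the algebraic identities.} Fix $x\in\bar{\Omega}$. If $h^{0}(x)\le0$, then $F^{0}(x,0)=h^{0}_{+}(x)=0$, so $w_2^{0}(x)=0$ and hence $w_1^{0}(x)=w_1^{0}(x;0)=0$ by \eqref{w-1-def}. If $h^{0}(x)>0$, then $F^{0}(x,0)=h^{0}(x)>0>F^{0}(x,\tau_{2})$, so $w_2^{0}(x)>0$, and since $w_1^{0}(x;\tau)>0$ for $\tau>0$ (Lemma \ref{l1}-(ii)) also $w_1^{0}(x)>0$. The first equation in \eqref{l5-eq1} is precisely \eqref{l1-eq1} at $\varepsilon=0$, $\tau=w_2^{0}(x)$. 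For the second, expand $w_2^{0}(x)F^{0}(x,w_2^{0}(x))=0$ exactly as in the proof of Lemma \ref{l2}-(ii): using $2r(x)w_2^{0}(x)/G^{0}(x,w_2^{0}(x))=w_1^{0}(x)$ and $G^{0}(x,0)=2(a(x)+s(x))$, this product equals $(h^{0}_{+}(x)-h^{0}(x))w_2^{0}(x)+(s(x)-g(x)w_2^{0}(x))w_1^{0}(x)-(e(x)+f(x)w_2^{0}(x))w_2^{0}(x)$; when $h^{0}(x)>0$ the first bracket vanishes, and the second equation of \eqref{l5-eq1} follows.

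\textbf{Main obstacle.} No step is deep. The point requiring the most care is that the $\varepsilon=0$ versions of Lemmas \ref{l1}--\ref{l2} are genuinely inherited and, relatedly, that $w_2^{0}$ (hence ${\bf w}^{0}$) is continuous across the set $\{h^{0}=0\}$, where $w_2^{0}$ degenerates to $0$; both of these, and also the convergence estimate above, rest on the uniform bound $\partial_{\tau}F^{\varepsilon}\le -f_{\min}$, which is the small extra ingredient not explicitly isolated in Lemma \ref{l2}.
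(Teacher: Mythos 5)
Your proof is correct, and for the two regimes and the identities \eqref{l5-eq1} it follows the same route as the paper: the first equation is \eqref{l1-eq1} at $\varepsilon=0$, and the second comes from expanding $w_2^0F^0(\cdot,w_2^0)=0$ and noting that the single inequality in the proof of Lemma \ref{l2}-(ii) (replacing $h_+^{\varepsilon}$ by $h^{\varepsilon}$) becomes an equality where $h^0>0$. The one place you genuinely diverge is the continuity of $w_2^0$ and the uniform convergence $w_2^{\varepsilon}\to w_2^0$: the paper simply cites the implicit function theorem, whereas you isolate the uniform slope bound $\partial_{\tau}F^{\varepsilon}\le -f_{\min}$ and deduce everything from the elementary estimates $f_{\min}|w_2^{0}(x)-w_2^{0}(y)|\le|F^{0}(x,w_2^{0}(y))-F^{0}(y,w_2^{0}(y))|$ and $f_{\min}\|w_2^{\varepsilon}-w_2^{0}\|_{\infty}\le\sup|F^{\varepsilon}-F^{0}|$. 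This buys you two things the paper's one-line appeal glosses over: uniformity of the convergence over the compact set $\bar{\Omega}$ (the IFT is local), and well-behavedness at the degenerate points where $h^0(x)=0$ and the root sits on the boundary $\tau=0$; your bound $w_2^0\le h_+^0/f_{\min}$ handles that cleanly. The trade-off is only length — the quantitative argument is slightly longer but entirely elementary.
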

	\begin{proof} The continuity of $w_2^0$  and the fact that $\|w_2^{\varepsilon}-w_2^0\|_{\infty}\to0$ as $\varepsilon\to 0^+$ follows from the implicit function theorem. This in turn implies that $w_1^0\in C(\bar{\Omega})$ and a composition of two such functions, and $\|w_1^{\varepsilon}-w_1^0\|_{\infty}\to 0$ as $\varepsilon\to 0^+$. Clearly, we have that $w_2^0\ge 0$, which thanks to \eqref{w-1-def} implies that $w_1^0\ge 0$. Thus $\|{\bf w}^{\varepsilon}-{\bf w}^0\|\to 0$ as $\varepsilon\to 0^+$. This in turn with the fact that $\|\tilde{\bf w}^{\varepsilon}-{\bf w}^{\varepsilon}\|\to 0$ as $\varepsilon\to 0^+$ (see \eqref{l3-eq1}) implies that $\|\tilde{\bf w}^{\varepsilon}-{\bf w}^0\|\to 0$ as $\varepsilon\to 0^+$. Clearly, $w_2^0(x)=w_1^0(x)=0$ whenever $h^0(x)=0$.
		
		\medskip
		
		Next, fix $x\in\bar{\Omega}$ such that $h^0(x)>0$.  Then,  $ 
		F^{0}(x,0)=h^0(x)>0$. Hence, since $F^0(x,\tau_2)<0$ (where $\tau_2$ is as in Lemma \ref{l2}), it follows from the intermediate value theorem, the fact that $F^0(x,\tau)$ is strictly decreasing in $\tau\ge 0$, and the fact that $F^{0}(x,w_2^0(x))=0$ that $w_2^0(x)>0$. This along with \eqref{w-1-def} implies that $w_1^0(x)>0$. The first equation of \eqref{l5-eq1} follows from \eqref{l1-eq1} with $\varepsilon=0$ and $\tau=w_2^0(x)$. Furthermore, since $h^{0}(x)>0$, then it follows from the proof of \eqref{l2-eq2} that equality holds in this case with $\varepsilon=0$, which yields the validity of the second equation of \eqref{l5-eq1}.
	\end{proof}

	Next, for each $0<\varepsilon<\varepsilon_*$, consider the linear eigenvalue problem of the cooperative system
	\begin{equation}\label{A8}
		\begin{cases}
			\lambda^{\varepsilon}v_1=d_1\Delta v_1+(r-c\tilde{w}_1^{\varepsilon})v_2 -(a+s)v_1 & x\in\Omega,\cr 
			\lambda^{\varepsilon}v_2=d_2\Delta v_2 +(s-g\tilde{w}_2^{\varepsilon})v_1 -ev_2 & x\in\Omega,\cr 
			0=\partial_{\vec{n}}v_1=\partial_{\vec{n}}v_2 & x\in\partial\Omega.
		\end{cases}
	\end{equation}
	Denote by $\lambda^{\varepsilon}_*$ the principal eigenvalue of \eqref{A8}.  Note from \eqref{l5-eq1} that for every $x\in\bar{\Omega}$ satisfying $h^0(x)>0$, that is $r(x)s(x)>e(x)(a(x)+s(x))$, we have that 
	\begin{align*}
		&(r(x)-c(x)w_1^0(x))(s(x)-g(x)w_2^0(x))-(a(x)+s(x))e(x)\cr
		=&(a(x)+s(x))w_2^0(x)+e(x)b(x)w_1^0(x)+f(x)b(x)w_1^0(x)w_2^{0}(x)>0.
	\end{align*}
	Hence $((r-cw_1^0)(s-gw_2^0)-(a+s)e)_{\max}>0$. Therefore, since $\|\tilde{\bf w}^{\varepsilon}-{\bf w}^0\|\to 0$ as $\varepsilon\to 0^+$ (see Lemma \ref{l5}), then there is $0<\tilde{\varepsilon}_*<\varepsilon_*$ such that $((r-c\tilde{w}_1^{\varepsilon})(s-g\tilde{w}_2^{\varepsilon})-(a+s)e)_{\max}>0$ for $0\le \varepsilon<\tilde{\varepsilon}_*$. Therefore, by \cite[Proposition 1]{CCM2020}, for every $0<\varepsilon<\tilde{\varepsilon}_*$, there is $\tilde{d}^{\varepsilon}_{*}>0$ such that $\tilde{\lambda}^{\varepsilon}_*>0$ for every $0<d_1,d_2<\tilde{d}^{\varepsilon}_*$.
	
	\medskip
	
	The following lemma is needed.
	
	\begin{lem} Fix $0<\varepsilon<\tilde{\varepsilon}_*$ and $0<d_1,d_2<\tilde{\varepsilon}_*$. Then there is a unique positive steady state ${\bf v}(\cdot;{\bf d})$ to the cooperative system 
		\begin{equation}\label{A10}
			\begin{cases}
				0=d_1\Delta v_1+(r-c\tilde{w}_1^{\varepsilon})v_2-(a+s+bv_1)v_1 & x\in\Omega,\cr
				0=d_2\Delta v_2+(s-g\tilde{w}_2^{\varepsilon})v_1 - (e+fv_2)v_2 &  x\in\Omega,\cr 
				0=\partial_{\vec{n}}v_1=\partial_{\vec{n}}v_2 & x\in\partial\Omega.
			\end{cases}
		\end{equation}
		Furthermore, ${\bf v}(\cdot;{\bf d})\to {\bf v}^{\varepsilon}$ as $\max\{d_1,d_2\}\to 0^+$ locally uniformly in $\Omega$, where for each $x\in\bar{\Omega}$, ${\bf v}^{\varepsilon}(x)$ is the unique nonnegative stable solutions of the system algebraic equations 
		\begin{equation}\label{A9}
			\begin{cases}
				0=(r-c\tilde{w}_1^{\varepsilon})v_2^{\varepsilon}-(a+s+bv_1^{\varepsilon})v_1^{\varepsilon} & x\in\Omega,\cr
				0=(s-g\tilde{w}_2^{\varepsilon})v_1^{\varepsilon} - (e+fv_2^{\varepsilon})v_2^{\varepsilon} &  x\in\Omega. 
			\end{cases}
		\end{equation}
		
	\end{lem}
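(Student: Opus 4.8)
The plan is to treat \eqref{A10} as a genuinely cooperative, strictly sublinear elliptic system and then run a frozen–coefficient singular–perturbation argument for the small–diffusion limit. First I would record the structural facts. By \eqref{l3-eq1} the coefficients $\rho_1:=r-c\tilde w_1^{\varepsilon}$ and $\rho_2:=s-g\tilde w_2^{\varepsilon}$ are strictly positive on $\bar\Omega$, so \eqref{A10} is fully coupled and the parabolic semiflow it induces on $[C^+(\bar\Omega)]^2$ is strongly monotone and, by parabolic regularity, eventually compact; the reaction is strictly sublinear because $b_{\min},f_{\min}>0$; and by the discussion preceding the lemma the principal eigenvalue $\tilde\lambda^{\varepsilon}_*$ of the linearization \eqref{A8} at $\mathbf 0$ is positive for $\mathbf d$ in the stated range. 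Moreover $\tilde{\mathbf w}^{\varepsilon}$ is a supersolution of \eqref{A10} by \eqref{l3-eq2}, while, writing $\phi^{\varepsilon}=(\phi_1^{\varepsilon},\phi_2^{\varepsilon})\gg\mathbf 0$ for the positive principal eigenfunction of \eqref{A8}, the function $\epsilon_1\phi^{\varepsilon}$ is a strict subsolution of \eqref{A10} for every small $\epsilon_1>0$ (the residual equals $\epsilon_1\phi_i^{\varepsilon}(\tilde\lambda^{\varepsilon}_*-b_i\epsilon_1\phi_i^{\varepsilon})>0$), and $\epsilon_1\phi^{\varepsilon}\le\tilde{\mathbf w}^{\varepsilon}$ after shrinking $\epsilon_1$ further.

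From this ordered sub/supersolution pair, monotone iteration produces a steady state $\mathbf v(\cdot;\mathbf d)\in(\mathbf 0,\tilde{\mathbf w}^{\varepsilon}]$, which the strong maximum principle upgrades to $\mathbf 0\ll\mathbf v(\cdot;\mathbf d)$. For uniqueness I would use the standard sliding argument: if $\mathbf v,\hat{\mathbf v}$ are two positive steady states, set $\sigma^*:=\inf\{\sigma\ge 1:\sigma\mathbf v\ge\hat{\mathbf v}\}$; if $\sigma^*>1$, strict sublinearity makes $\sigma^*\mathbf v$ a strict supersolution lying above a steady state, contradicting the strong maximum principle, so $\sigma^*=1$, i.e. $\hat{\mathbf v}\le\mathbf v$, and by symmetry $\mathbf v=\hat{\mathbf v}$. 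Global asymptotic stability of $\mathbf v(\cdot;\mathbf d)$ among positive data then follows from the theory of monotone dynamical systems for a fully coupled, eventually compact, sublinear semiflow \cite{Hess}; alternatively, \eqref{A10} fits the setting of Theorem \ref{TH4}, with the roles of $\tilde h_1,\tilde h_2$ played by the (already $v$–independent) positive coefficients $\rho_1,\rho_2$.

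For the asymptotic profile I would first analyze \eqref{A9} pointwise. Solving its first equation as $v_2=(a(x)+s(x)+b(x)v_1)v_1/\rho_1(x)=:\psi_1(v_1)$ and its second as $v_1=(e(x)+f(x)v_2)v_2/\rho_2(x)=:\psi_2(v_2)$, the composition $\psi_2\circ\psi_1$ is increasing, its ratio to the identity is strictly increasing, and its slope at the origin is $(a(x)+s(x))e(x)/(\rho_1(x)\rho_2(x))$; hence \eqref{A9} has exactly one positive solution $\mathbf v^{\varepsilon}(x)\gg\mathbf 0$ when $\rho_1(x)\rho_2(x)>(a(x)+s(x))e(x)$ and only $\mathbf 0$ otherwise, and in each case the selected kinetic equilibrium is the unique linearly stable one (this is the same dichotomy governed by $h^{\varepsilon}$ in \eqref{tilde-F-funct-def}, now with $r,s$ replaced by $\rho_1,\rho_2$). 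Continuity of $x\mapsto\mathbf v^{\varepsilon}(x)$ on the open set $\{\rho_1\rho_2>(a+s)e\}$ follows from the implicit function theorem exactly as in Lemmas \ref{l1}, \ref{l2} and \ref{l5}, and $\mathbf v^{\varepsilon}$ extends continuously by $\mathbf 0$ across the closed complementary set by the squeeze $\mathbf 0\le\mathbf v^{\varepsilon}\le$ a quantity that vanishes as $\rho_1\rho_2-(a+s)e\to 0^+$. The convergence $\mathbf v(\cdot;\mathbf d)\to\mathbf v^{\varepsilon}$ locally uniformly is then the system analogue of \cite[Proposition 1 and Theorem 1]{CCM2020}: the uniform ceiling $\mathbf 0\le\mathbf v(\cdot;\mathbf d)\le\tilde{\mathbf w}^{\varepsilon}$ is available; for the upper bound one freezes all coefficients at a point $x_0$, compares on a small ball with a constant supersolution of the frozen system to get $\limsup v_i\le v_i^{\varepsilon}(x_0)+\delta$; for the lower bound one builds on a ball $B_\rho(x_0)$ a subsolution from the Dirichlet principal eigenfunction scaled by a small factor, using that $d_i\lambda_1^{D}(B_\rho(x_0))\to 0$ as $\max\{d_1,d_2\}\to 0$ for fixed $\rho$, to get $\liminf v_i\ge v_i^{\varepsilon}(x_0)-\delta$; on $\{\rho_1\rho_2\le(a+s)e\}$ the same frozen-coefficient supersolution forces $\mathbf v(\cdot;\mathbf d)\to\mathbf 0$.

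The main obstacle I anticipate is precisely this last step, and within it the local lower bound near $\partial\{\rho_1\rho_2>(a+s)e\}$, where $\mathbf v^{\varepsilon}$ is small and the comparison radii $\rho$ and scaling factors must be chosen with care so that the loss of ellipticity as $\mathbf d\to\mathbf 0$ is absorbed; the \emph{cooperative} structure, which stays valid on every comparison domain since $\rho_i\ge m_*/2>0$ by \eqref{l3-eq1}, is what keeps the maximum-principle comparisons available throughout.
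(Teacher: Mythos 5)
Your proposal is correct and follows essentially the same route as the paper: the paper's proof of this lemma is two sentences long, deducing existence, uniqueness and stability from the fact that \eqref{A10} is a fully coupled cooperative, strictly subhomogeneous system with $\lambda^{\varepsilon}_*>0$, and citing \cite[Theorem 1]{CCM2020} for the small-diffusion profile. You simply unpack the machinery the paper cites (sub/supersolution pair $\epsilon_1\phi^{\varepsilon}\le\tilde{\bf w}^{\varepsilon}$, sliding argument via subhomogeneity, pointwise analysis of \eqref{A9}, and the frozen-coefficient comparison underlying \cite[Theorem 1]{CCM2020}), all of which is consistent with the paper's argument.
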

	\begin{proof} Since $\lambda^{\varepsilon}_*>0$ for every $0<d_1,d_2<\tilde{\varepsilon}_*$ and system \eqref{A9} is cooperative, strictly subhomogenous, and nonnegative classical solutions of the corresponding parabolic system are eventually bounded, then it has a unique positive steady state solution ${\bf v}(\cdot;{\bf d})$.   The asymptotic profiles of ${\bf v}(\cdot;{\bf d})$ as $\max\{d_1,d_2\}\to 0$ follows from \cite[Theorem 1]{CCM2020}.   
	\end{proof}
	
	We complete this section with a proof of Theorem \ref{TH6}.
	
	\begin{proof}[Proof of Theorem \ref{TH6}] By Lemma \ref{l4}, we have that ${\bf 0}\ll {\bf u}(\cdot;{\bf d})\le \tilde{\bf w}^{\varepsilon}$ for every $0<d_1,d_2<\min\{\tilde{d}^{\varepsilon},\tilde{d}^{\varepsilon}_*\}$, $0<\varepsilon<\tilde{\varepsilon}_*$. Hence, since by Lemma \ref{l5}, $\|\tilde{\bf w}^{\varepsilon}-{\bf w}^0\|\to 0$ as $\varepsilon\to 0^+$, then 
		\begin{equation}\label{A12}
			\limsup_{\max\{d_1,d_2\}}{\bf u}(x;{\bf d})\le {\bf w}^0(x) \quad \text{for }\ x\ \  \text{uniformly in}\ \bar{\Omega}.
		\end{equation}
		Next, for every $0<\varepsilon<\tilde{\varepsilon}_*$ and $0<d_1,d_2<\min\{\tilde{d}^{\varepsilon},\tilde{d}^{\varepsilon}\}$, since ${\bf u}(\cdot;{\bf d})\le \tilde{w}^{\varepsilon}$ and ${\bf u}(\cdot;{\bf d})$ is a positive steady state solution of \eqref{Eq1}, then 
		$$
		\begin{cases}
			0\ge d_1\Delta u_1(\cdot;{\bf d})+(r-c\tilde{w}_1^{\varepsilon})u_2(\cdot;{\bf d})-(a+s+bu_1(\cdot;{\bf d})){u}_1(\cdot,{\bf d}) & x\in\Omega,\cr
			0\ge d_2\Delta u_2(\cdot;{\bf d})+(s-g\tilde{w}_2^{\varepsilon})u_1(\cdot;{\bf d})-(e+fu_2(\cdot;{\bf d})){u}_2(\cdot;{\bf d}) & x\in\Omega,\cr
			0=\partial_{\vec{n}}u_1(\cdot;{\bf d})=\partial_{\vec{n}}u_2(\cdot;{\bf d}) & x\in\partial\Omega,
		\end{cases}
		$$
		and hence ${\bf u}(\cdot;{\bf d})$ is super-solution of \eqref{A10}. Therefore, ${\bf v}(\cdot;{\bf d})\le {\bf u}(\cdot;{\bf d})$ for every $0<d_1,d_2<\min\{\tilde{d}^{\varepsilon}_*,\tilde{d}^{\varepsilon}\}$ and $0<\varepsilon<\tilde{\varepsilon}_*$.  Hence, letting $\max\{d_1,d_2\}\to0^+$, we deduce from Lemma \ref{l5} that 
		\begin{equation}\label{A13}
			{\bf v}^{\varepsilon}(x)\le \liminf_{\max\{d_1,d_2\}\to 0^+}{\bf u}(x;{\bf d})\quad \text{for }\ x\ \text{locally uniformly in}\ \Omega.
		\end{equation}
		where ${\bf v}^{\varepsilon}$, $0<\varepsilon<\min\{\tilde{d}^{\varepsilon}_*,\tilde{d}^{\varepsilon}\}$, is the unique nonnnegative stable solution of \eqref{A9}. Since $\|\tilde{\bf w}^{\varepsilon}-{\bf w}^0\|\to 0$ as $\varepsilon\to 0$ and ${\bf w}^0$ is the unique nonnegative stable  solution of the system of algebraic equations \eqref{l5-eq1}, then letting $\varepsilon\to 0$ in \eqref{A9}, we have that $\|{\bf v}^{\varepsilon}-{\bf w}^0\|\to 0$ as $\varepsilon\to 0^+$.  Thus, sending $\varepsilon\to0^+$ in \eqref{A13}, we have that
		\begin{equation}\label{A14}
			{\bf w}^{0}(x)\le \liminf_{\max\{d_1,d_2\}\to 0^+}{\bf u}(x;{\bf d})\quad \text{for }\ x\ \text{locally uniformly in}\ \Omega.
		\end{equation}
		Observing that ${\bf w}^0(x)$ is the unique nonnegative stable solution of \eqref{TH6-eq2} for every $x\in\bar{\Omega}$, then ${\bf w}^0={\bf u}^*$. Therefore \eqref{TH2-eq1} follows from \eqref{A12} and \eqref{A14}.
	\end{proof}

	{\bf Acknowledgment.} The authors are grateful to the anonymous reviewers for their valuable comments and suggestions which help improved the presentation of the paper. The authors acknowledge support from IMU and GRAID program.

	\subsection*{Declarations}
	{\bf Ethical Approval:} Not applicable for this study.
	
	\vspace{0.05in}
	
	\noindent{\bf Competing interests:} The authors declare that there is no competing interest.
	
	\vspace{0.05in}
	
	\noindent{\bf Authors' contributions:} We acknowledge that all the three authors contributed equally in designing and conducting the study and in the mathematical analysis of the results.
	
	\vspace{0.05in}

	\vspace{0.05in}
	
	\noindent{\bf Availability of data and materials:} Not applicable.

\end{document}